\newtheorem{theorem}{Theorem}
\newtheorem{corollary}{Corollary}
\newtheorem*{mainA}{Theorem A}
\newtheorem*{mainB}{Theorem B}
\newtheorem{lemma}{Lemma}
\newtheorem{proposition}{Proposition}
\newtheorem{definition}{Definition}
\newtheorem{remark}{Remark}
\newtheorem{example}{Example}
\def\RR{{\mathbb R}}
\def\ZZ{{\mathbb Z}}
\def\CC{{\mathbb C}}
\def\NN{{\mathbb Z}^+}
\def\G{{\bf G}}
\def\FF{{\mathbb F}}
\def\ZZ{{\mathbb Z}}
\def\P{{\mathcal P}}
\def\M{{\mathcal M}}
\def\G{{\mathcal G}}
\def\Q{{\mathcal Q}}
\def\inf{{\rm inf }}
\begin{document}

\title {Invariant measures and orbit equivalence for generalized Toeplitz subshifts.}

\author{Mar\'{\i}a Isabel Cortez, Samuel Petite}
\address{Departamento de Matem\'atica y Ciencia de la Computaci\'on, Universidad de Santiago de Chile,
Av. Libertador Bernardo O'Higgins 3363, Santiago, Chile.
 Laboratoire Ami\'enois de Math\'ematique Fondamentale et appliqu\'ee, CNRS U.M.R. 6140,
Universit\'e de Picardie Jules Verne, 33 rue Saint-Leu  8039
Amiens  Cedex 1  France.} \email{maria.cortez@usach.cl,
samuel.petite@u-picardie.fr}
\thanks{M. I. Cortez acknowledges financial support from proyecto Fondecyt
1100318. This work is part of the project PICS ``dynamics and combinatorics''}

\subjclass[2000]{Primary: 37B10; Secondary: 37B05}
\keywords{Toeplitz subshift, discrete group actions, invariant
measures, orbit equivalence.}

\maketitle{}
\begin{abstract}
We show that for every metrizable Choquet simplex $K$ and for
every group $G$, which is infinite, countable, amenable and
residually finite, there exists a Toeplitz $G$-subshift whose set
of shift-invariant probability measures is affine homeomorphic to
$K$. Furthermore, we get that for every integer $d> 1$ and
every Toeplitz flow $(X,T)$, there exists a   Toeplitz $\ZZ^d$-subshift which
is topologically orbit equivalent to $(X,T)$.
\end{abstract}
\section{Introduction}

The {\it Toeplitz subshifts} are a rich class of symbolic systems
introduced by Jacobs and Keane in \cite{JK}, in the context of
$\ZZ$-actions. Since then, they have been extensively studied and
used to provide series of examples with interesting dynamical
properties (see for example \cite{Do1,Do2,GJ,Wi}). Generalizations
of Toeplitz subshifts and some of their properties to more general
group actions  can be found in \cite{Co06,CP,Do3,Kr,Kr2}. For
instance, in \cite{CP} Toeplitz subshifts are characterized as the
minimal symbolic almost 1-1 extensions of odometers (see \cite{DL}
for this result in the context of $\ZZ$-actions). In this paper,
we give an explicit construction that generalizes the result of
Downarowicz in \cite{Do1}, to Toeplitz subshifts given by actions
of groups which are amenable, countable and  residually finite. The following is our main result.

\begin{mainA}\label{main-theorem}
Let $G$ be an infinite, countable,  amenable and residually finite  
group. For every metrizable Choquet simplex $K$ and any
$G$-odometer $O$, there exists a Toeplitz $G$-subshift whose set
of invariant probability measures is affine homeomorphic to $K$
and such that it is an almost 1-1 extension of $O$.
\end{mainA}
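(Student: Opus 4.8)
The plan is to adapt Downarowicz's realization of Choquet simplices by Toeplitz flows \cite{Do1} to a residually finite amenable group, replacing the one–dimensional combinatorics of words and concatenation by finite colored patterns (a kind of ``proto-pav\'e'') placed hierarchically along the coset tower that defines the odometer. First I would fix the data. Since $O$ is a $G$-odometer, write $O=\varprojlim_n G/\Gamma_n$ for a nested sequence $G=\Gamma_0\supsetneq\Gamma_1\supsetneq\cdots$ of finite-index normal subgroups with $\bigcap_n\Gamma_n=\{e\}$, the action being left translation, and choose transversals $D_n\ni e$ of $\Gamma_n$ in $G$ with $D_{n+1}=\bigsqcup_t tD_n$, $t$ running over a transversal of $\Gamma_{n+1}$ in $\Gamma_n$; then $\{\gamma D_n:\gamma\in\Gamma_n\}$ tiles $G$, the level-$(n+1)$ tiling coarsens the level-$n$ tiling, and each level-$(n+1)$ tile is a union of $[\Gamma_n:\Gamma_{n+1}]$ level-$n$ tiles. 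Fix also a tempered F\o lner sequence $(F_k)_k$ for $G$. On the simplex side I would invoke the Lazar--Lindenstrauss theorem to write $K=\varprojlim_n(\Delta_{m_n},\rho_n)$ as an inverse limit of standard finite-dimensional simplices with affine surjective bonding maps, and then, by a routine perturbation of the tower (approximating each $\rho_n$ and inserting intermediate simplices as needed), assume every $\rho_n$, in barycentric coordinates, is a column-stochastic matrix with rational entries.

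Next I would build, inductively over $n$, a fixed finite alphabet $A$, an increasing sequence of period indices $p_0<p_1<\cdots$, and for each $n$ a finite family $\{T_n^1,\dots,T_n^{m_n}\}$ of colored tiles, each a map $D_{p_n}\to A\cup\{\star\}$ ($\star=$ ``hole''), subject to: (i) each $T_{n+1}^i$, decomposed along $D_{p_{n+1}}=\bigsqcup_t tD_{p_n}$, presents in each sub-block either a translate of some $T_n^j$ or a block of holes, the number of sub-blocks of type $j$ being $N_{n+1}\,\rho_{n+1}(j,i)$ for a common multiple $N_{n+1}$ (the same for all $i$, since $\rho_{n+1}$ is column-stochastic); (ii) unique readability: distinct tiles, and distinct translates of one tile, are distinguishable from their $A$-entries alone, and in the array $\omega$ below a level-$n$ tile occurs only correctly placed, so that ``a copy of $T_n^i$ sits at $g$'' defines, up to a null set, a clopen subset of $X$; (iii) the decoration (non-skeletal) content is abundant enough that the level-$n$ tile cylinders, over all $n$, generate the Borel $\sigma$-algebra of $X$ modulo the ``hole part'', which is null for every invariant measure. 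Choosing once and for all a compatible sequence of type functions $f_n$ on the level-$n$ tiles of $G$ (so that the level-$(n+1)$ typing forces the level-$n$ typing through (i)), let $\omega_n:G\to A\cup\{\star\}$ be the resulting partial coloring, $\omega=\lim_n\omega_n\in A^G$, and $X\subseteq A^G$ the shift-orbit closure of $\omega$.

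I would then verify the two conclusions. Every $g\in G$ lies, from some stage on, in the $\Gamma_{p_n}$-periodic part of $\omega$, so $\omega$ is a Toeplitz array and $X$ a Toeplitz $G$-subshift; by \cite{CP} it is minimal and an almost 1-1 extension of a $G$-odometer, and since the periods run cofinally through $(\Gamma_n)$ that odometer is exactly $O$. For the invariant measures, amenability forces $M_G(X,\sigma)\neq\emptyset$, a metrizable Choquet simplex. For $\mu\in M_G(X,\sigma)$ and each $n$, the frequency of correctly placed occurrences of $T_n^i$ equals $\mu$ of the corresponding clopen set (invariance), and these numbers, normalized, give a point $\Phi_n(\mu)\in\Delta_{m_n}$; (i) yields $\rho_{n+1}\circ\Phi_{n+1}=\Phi_n$, so $\Phi=(\Phi_n)_n:M_G(X,\sigma)\to K$ is affine and continuous. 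It is injective by (iii), $\mu$ being recovered from $(\Phi_n(\mu))_n$, and surjective because any prescribed compatible sequence $(x_n)_n\in K$ is realized as tile-frequencies by an invariant measure obtained as a weak-$*$ limit along $(F_k)$ of empirical measures of a point of $X$ whose level-$n$ tiles are distributed in asymptotic proportions $x_n$ — such a point exists by the freedom left in (i) together with minimality of $X$. An affine continuous bijection of metrizable compact convex sets being an affine homeomorphism, $M_G(X,\sigma)\cong K$.

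\textbf{Main obstacle.} The crux is Step 2 together with the injectivity and surjectivity of $\Phi$: one must engineer the colored tiles and the hole pattern so as to be simultaneously \emph{rigid enough} that the only affine constraints on tile frequencies are the normalizations and the relations $\rho_n$ (hence no spurious invariant measures, and no accidental extra periods enlarging the maximal equicontinuous factor beyond $O$) and \emph{flexible enough} that every compatible frequency sequence, and so every point of $K$, is actually realized. In the $\ZZ$-case this is Downarowicz's delicate bookkeeping of $0$--$1$ patterns; here it must be carried out with finite $G$-patterns over a coarsening tower of coset tilings of a possibly non-abelian $G$, where frequencies are controlled by F\o lner averaging rather than by elementary interval counting, and where one must also check that the ``unique readability'' window stays bounded (or at least that the ambiguous set is null) so that tile occurrences are genuinely detected by the invariant measures. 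A subsidiary technical point is the rational perturbation of the Lazar--Lindenstrauss bonding maps, which must be performed without changing the affine homeomorphism type of the inverse limit.
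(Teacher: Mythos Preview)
Your outline follows Downarowicz's original hole--pattern strategy, whereas the paper explicitly abandons that route in favor of a Gjerde--Johansen style construction via Kakutani--Rokhlin partitions and incidence matrices. The paper's proof of Theorem~A is a two--line corollary of two substantial ingredients: (a) Proposition~\ref{construction}, which builds a Toeplitz $G$-subshift whose set of invariant measures is affinely homeomorphic to $\varprojlim_n(\triangle(k_n,|F_n|),M_n)$ for any prescribed ``managed'' sequence $(M_n)_n$ of positive integer matrices, and (b) Lemmas~\ref{lemma-inverse-limit-1} and~\ref{infinite}, which show that every metrizable Choquet simplex arises as such an inverse limit. The crucial technical device underlying (a), absent from your sketch, is Lemma~\ref{folner2}: the fundamental domains $F_n$ of $G/\Gamma_n$ are chosen to be \emph{simultaneously} a F\o lner sequence, nested, and tileable ($F_{n+1}=\bigcup_{v\in F_{n+1}\cap\Gamma_n}vF_n$). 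With this in hand the paper builds patterns $B_{n,k}\in\Sigma^{F_n}$ satisfying an aperiodicity condition (C3) and a ``forcing the border'' condition (C4) which together guarantee that the cylinder partitions $\P_n$ \emph{span the topology} of $X$; there are no holes, the map from $\M(X,\sigma,G)$ to the inverse limit is a tautological bijection, and both injectivity and surjectivity are immediate.

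Your proposal, by contrast, keeps the transversals $D_n$ and the F\o lner sequence $(F_k)$ separate, retains the $\star$-holes, and defers the hard work to three assertions you do not prove: that the hole set is null for every invariant measure, that the tile--cylinders generate the Borel $\sigma$-algebra modulo that null set, and (most seriously) that for every $(x_n)_n\in K$ there is a point of $X$ whose level-$n$ tile frequencies realize $x_n$. The last of these is precisely what the paper's machinery is designed to deliver automatically, and your one-line justification (``by the freedom left in (i) together with minimality of $X$'') is not an argument: minimality gives no control over asymptotic frequencies, and the freedom in (i) has already been spent in constructing the single Toeplitz point $\omega$. In the $\ZZ$ case Downarowicz resolves this by an explicit and delicate construction; to carry it to a general residually finite amenable $G$ you would need, at minimum, the F\o lner--fundamental--domain compatibility that the paper isolates in Section~\ref{suitable-sequence}. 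As it stands, the surjectivity of $\Phi$ is a genuine gap.
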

Typical examples of the groups $G$ involved in  this theorem  are the finitely generated
  subgroups of upper triangular matrices in
$GL(n,\CC)$.

The strategy of  Downarowicz in \cite{Do1}, is to construct an affine homeomorphism between an arbitrary 
metrizable Choquet simplex $K$ and a subset of the space of invariant probability measures of the full shift $\{0,1\}^{\ZZ}$. 
Then he shows  it coincides with the space of invariant probability measures of a Toeplitz subshift $Y\subseteq \{0,1\}^{\ZZ}$. 
To do this, he uses the structure of metric space of the space of measures.
In this paper we consider the representation of $K$ as an inverse limit  of finite dimensional simplices with linear transition maps $(M_n)_n$.
Then we use this transition maps to construct Toeplitz $G$-subshifts having sequences of Kakutani-Rokhlin partitions with $(M_n)_{n}$ 
as the associated sequence of incidence matrices. Our approach is closer to the strategy used in \cite{GJ} by Gjerde and Johansen, and deals 
with the combinatorics of F\o lner sequences.

We obtain, furthermore  some consequences in the orbit
equivalence problem. Two minimal Cantor systems  are
(topologically) orbit equivalent, if there exists an
orbit-preserving  homeomorphism between their phase spaces.
Giordano, Matui, Putnam and Skau show in \cite{GMPS} that every
minimal $\ZZ^d$-action on the Cantor set is orbit equivalent to a
minimal $\ZZ$-action.  It is still unknown  if  every minimal action of a countable amenable group
on the Cantor set is orbit equivalent to a $\ZZ$-action.  Nevertheless it is clear that the result in   \cite{GMPS}
  can not be extended to any
countable group. For instance, by using the notion of cost,
Gaboriau \cite{Ga} proves that if two free actions of free groups
$\FF_{n}$ and $\FF_{p}$ are  (even measurably) orbit equivalent
then their rank are the same i.e. $n=p$.  Another problem is to know  which are the
$\ZZ$-orbit equivalence classes that the $\ZZ^d$-actions (or more
general group actions) realize. We give a partial answer for 
this question. As a consequence of the proof of Theorem A we
obtain the following result.

\begin{mainB}\label{main-corollary}
Let $(X,\sigma|_X,\ZZ)$ be a Toeplitz $\ZZ$-subshift. Then  for every $d\geq 1$ there exists a Toeplitz $\ZZ^d$-subshift
which is orbit equivalent to $(X,\sigma|_X,\ZZ)$.  
\end{mainB}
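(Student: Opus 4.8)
The plan is to run, for $G=\ZZ^d$, the construction underlying Theorem~A so as to produce a Toeplitz $\ZZ^d$-subshift sharing with $(X,\sigma|_X)$ the combinatorial data that controls orbit equivalence, namely the sequence of incidence matrices of a nested sequence of Kakutani--Rokhlin partitions. Since $\ZZ^1=\ZZ$ the case $d=1$ is trivial, so fix $d\ge 2$. First I would fix a nested sequence of clopen Kakutani--Rokhlin partitions of $(X,\sigma|_X)$ generating the topology, with associated sequence of incidence matrices $(M_n)_n$, where $M_n$ has size $k_{n+1}\times k_n$ and, after telescoping, strictly positive entries. This one sequence simultaneously encodes the simplex $K$ of $\sigma|_X$-invariant probability measures (realized as an inverse limit of the finite-dimensional simplices $\Delta_{k_n}$ with transition maps induced by the $M_n$), which is a metrizable Choquet simplex since $X$ is compact metrizable; and the ordered dimension group $G_0=\varinjlim(\ZZ^{k_n},M_n)$, which is (isomorphic to) $K^0(X,\sigma|_X)$.

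Next I would feed the presentation $(M_n)_n$, together with $G=\ZZ^d$ and a $\ZZ^d$-odometer compatible with the F\o lner boxes of the construction, into the proof of Theorem~A: this is legitimate because that proof takes precisely such data as input and controls the incidence matrices, not merely the resulting simplex $K$. The output is a Toeplitz $\ZZ^d$-subshift $(Y,\sigma|_Y)$, an almost 1-1 extension of the odometer, carrying a nested generating sequence of Kakutani--Rokhlin partitions with incidence matrices again $(M_n)_n$. Computing the dimension group of the minimal Cantor $\ZZ^d$-system $(Y,\sigma|_Y)$ from this partition sequence exactly as in the classical one-dimensional case --- each tower collapsing to its base, the transition maps being the $M_n$, and the relations coming from crossing between distinct towers being absorbed in the direct limit --- gives $K^0(Y,\sigma|_Y,\ZZ^d)\cong\varinjlim(\ZZ^{k_n},M_n)=G_0$.

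Finally I would conclude with orbit-equivalence theory. By the theorem of Giordano, Matui, Putnam and Skau \cite{GMPS}, the $\ZZ^d$-system $(Y,\sigma|_Y)$ is orbit equivalent to a minimal Cantor $\ZZ$-system $Z$, and orbit equivalence preserves the ordered dimension group modulo its infinitesimal subgroup, with its order unit; hence this invariant for $Z$ coincides with the one attached to $G_0$, that is, with that of $(X,\sigma|_X,\ZZ)$. Since, by the Giordano--Putnam--Skau classification, this invariant is complete for orbit equivalence among minimal Cantor $\ZZ$-systems, $(X,\sigma|_X,\ZZ)$ and $Z$ are orbit equivalent, and therefore, by transitivity, so are $(X,\sigma|_X,\ZZ)$ and $(Y,\sigma|_Y,\ZZ^d)$. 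I expect the real work to lie in the middle step: checking that the subshift produced by the construction of Theorem~A genuinely carries Kakutani--Rokhlin partitions with incidence matrices exactly $(M_n)_n$ (perhaps after enlarging the F\o lner sets and telescoping, which leaves $\varinjlim(\ZZ^{k_n},M_n)$ unchanged) and that its $\ZZ^d$-dimension group is computed from them as in dimension one, together with the precise bookkeeping --- in particular the passage to infinitesimals --- needed to invoke \cite{GMPS} and the Giordano--Putnam--Skau theorem; once these points are settled, the classification of such systems by their dimension groups closes the argument.
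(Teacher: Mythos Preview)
Your overall architecture matches the paper's: extract the incidence matrices from the $\ZZ$-Toeplitz subshift, rerun the construction for $\ZZ^d$, and conclude via the GMPS invariant. However, the middle step hides the main obstruction, and the paper spends most of its proof of Theorem~B overcoming it.

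The construction underlying Theorem~A (Proposition~\ref{construction}) does \emph{not} accept an arbitrary managed sequence $(M_n)_n$: it requires the extra hypothesis $k_{n+1}\le K\,|F_{n+1}|/|F_n|$ on the number of columns, because one must manufacture $k_{n+1}$ \emph{distinct} patterns $B_{n+1,1},\dots,B_{n+1,k_{n+1}}$ satisfying (C1)--(C4), and the number of available fillings is bounded in terms of the matrix entries. Telescoping enlarges the entries but does not in general bound $k_{n+1}$ relative to $|F_{n+1}|/|F_n|$, so ``feed $(M_n)_n$ into Theorem~A'' is not legitimate for an arbitrary Toeplitz $\ZZ$-subshift. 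The paper gets around this by (i) proving, from the Gjerde--Johansen word description, a multinomial bound on the number of \emph{equal} columns in the telescoped matrices (Lemma~\ref{lemma-KR}), and (ii) replacing the crude column hypothesis of Proposition~\ref{construction} by the weaker combinatorial sufficient condition of Remark~\ref{remark-bound}, which that multinomial bound does imply once the F{\o}lner ``border'' $\bigcup_{g\in R_{n_i}}(F_{n_{i+1}}\setminus F_{n_{i+1}}-g)$ has been made small enough by a further, carefully chosen telescoping. This combinatorics is the heart of the proof and is absent from your proposal.

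Two smaller points. First, you must actually arrange $|\ZZ^d/\Gamma_n|=p_n$; the paper does this by factoring each $r_n=p_{n+1}/p_n$ as $r_{n,1}\cdots r_{n,d}$ with all factors $>1$ (after a preliminary telescoping so such factorizations exist) and setting $\Gamma_n=\prod_i q_{n,i}\ZZ$. Your ``odometer compatible with the F{\o}lner boxes'' hides this. Second, the construction does not literally reproduce $(M_n)_n$ as incidence matrices: it outputs the modified sequence $(\tilde M_n)_n$ with one extra row and column, and it is Lemma~\ref{nexo2} that shows the unital dimension groups, and hence their quotients by infinitesimals, coincide. Finally, your detour through an auxiliary $\ZZ$-system $Z$ is unnecessary: Theorem~\ref{theorem1} (GMPS, Theorem~2.5) already compares a minimal $\ZZ^d$- and a minimal $\ZZ$-system directly via the isomorphism class of $\G$, so once $\G(Y,\sigma|_Y,\ZZ^d)\cong\G(X,\sigma|_X,\ZZ)$ you are done.
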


This paper is organized as follows. Section 2 is devoted to 
introduce the basic definitions. For an amenable discrete group $G$ and a decreasing
sequence of finite index subgroups of $G$ with trivial
intersection, we construct in Section 3 an associated sequence
$(F_n)_{n\geq 0}$ of fundamental domains,  so that it is F\o lner
and each $F_{n+1}$ is tilable by translated copies of $F_n$.  In Section 4 we construct Kakutani-Rokhlin partitions for generalized Toeplitz subshifts, and in Section 5 we use the fundamental domains introduced in Section 3 to construct Toeplitz subshifts having sequences of Kakutani-Rokhlin partitions with a prescribed sequence of incidence matrices. This construction improves and generalizes that one
given in \cite{Co} for $\ZZ^d$-actions, and moreover, allows to
characterize the associated ordered group with unit. In Section 6   we give a
characterization of any Choquet simplex as an inverse limit
defined by    sequences of matrices that we use in Section 5 (they are called "managed" sequences).  Finally, in Section 7 we use the previous results to prove Theorems A and B.

\section{Basic definitions and background}
In this article,  by a  {\em topological dynamical system} we mean
a triple  $(X, T, G)$, where $T$ is a continuous left action of
a  countable  group $G$ on the
compact metric space $(X,d)$. For every $g\in G$, we denote $T^g$
the homeomorphism that induces the action of $g$ on $X$. The unit
element of $G$ will be called $e$. The system $(X, T, G)$ or the
action $T$ is {\it minimal} if for every $x\in X$  the orbit
$o_T(x)=\{T^g(x): g\in G\}$ is dense in $X$. We say that $(X,T,G)$
is a {\it minimal Cantor system} or a minimal Cantor $G$-system if
$(X,T,G)$ is a minimal topological dynamical system with $X$ a
Cantor set.

An {\it invariant probability measure} of the topological
dynamical system $(X, T, G)$ is a probability Borel measure $\mu$
such that $\mu(T^g(A))=\mu(A)$, for every Borel set $A$. We denote
by $\M(X, T, G)$ the space of invariant probability measures of
$(X, T, G)$.


\subsection{Subshifts.}

For every $g\in G$, denote $L_g:G\to G$ the left multiplication by
$g\in G$. That is, $L_g(h)=gh$ for every $h\in G$. Let $\Sigma$ be
a finite alphabet.  $\Sigma^G$ denotes the set of all the
functions $x:G\to \Sigma$. The (left) {\it shift action} $\sigma$
of $G$ on $\Sigma^{G}$ is given by $\sigma^g(x)=x\circ
L_{g^{-1}}$, for every $g\in G$. Thus $\sigma^g(x)(h)=x(g^{-1}h)$.
We consider $\Sigma$ endowed with the discrete topology and
$\Sigma^{G}$ with the product topology. Thus every $\sigma^g$ is a
homeomorphism of the Cantor set $\Sigma^G$. The topological
dynamical system $(\Sigma^G,\sigma,G)$ is called the full
$G$-shift on $\Sigma$. For every finite subset $D$ of $G$ and $x\in
\Sigma^{G}$, we denote $x|_D\in\Sigma^{D}$ the restriction of $x$
to $D$. For $F\in \Sigma^D$ ($F$ is a function from $D$ to
$\Sigma$) we denote by $[F]$ the set of all $x\in \Sigma^D$ such
that $x|_D=F$. The set $[F]$ is called the {\it cylinder} defined
by $F$, and it is a clopen set (both open and closed). The
collection of all the sets $[F]$ is a base of the topology of
$\Sigma^{G}$. 

\begin{definition}A {\it subshift } or $G$-subshift of $\Sigma^{G}$ is a closed subset
$X$ of $\Sigma^{G}$ which is invariant by the shift action.
\end{definition}
 The
topological dynamical system $(X,\sigma|_X, G)$ is also called
subshift or $G$-subshift. See \cite{CC} for details.

\subsubsection{Toeplitz $G$-subshifts.} An element $x\in \Sigma^G$
is a {\it Toeplitz sequence}, if for every $g\in G$ there exists a
finite index subgroup $\Gamma$ of $G$ such that
$\sigma^{\gamma}(x)(g)=x(\gamma^{-1}g)=x(g)$, for every
$\gamma\in\Gamma$.

A subshift $X\subseteq \Sigma^{G}$ is a {\it Toeplitz subshift} or
Toeplitz $G$-subshift if there exists a Toeplitz sequence $x\in
\Sigma^{G}$ such that $X=\overline{o_{\sigma}(x)}$. 
It is shown in \cite{CP}, \cite{Kr} and \cite{Kr2} that a Toeplitz sequence $x$ is
{\it regularly recurrent}, {\em i.e.} for every neighborhood $V$ of $x$ there exists a finite index subgroup $\Gamma$ of $G$ such that $\sigma^{\gamma}(x)\in V$, for every $\gamma\in\Gamma$. This condition is stronger than almost periodicity, which implies minimality of the closure of the orbit of $x$ (see \cite{Aus} for details about almost periodicity).

\subsection{Inverse and direct limit.} Given a sequence of continuous maps $f_{n} \colon X_{n+1} \to X_{n},  n\ge 0$ on topological spaces $X_{n}$, we denote  the  associated {\em inverse limit} by
\begin{eqnarray*}
 \lim_{\leftarrow n} (X_{n},f_{n}) &= &\xymatrix{
{X_{0}}\ar@{<-}[r]^{f_{0}} &
{X_{1}}\ar@{<-}[r]^{f_{1}} &
{X_{2}}\ar@{<-}[r]^{f_{2}}& {\cdots}}\\
\ & := & \{ (x_{n})_{n} ; x_{n } \in X_{n}, \ x_{n}=f_{n}(x_{n+1})   \ \forall n \ge0\}.
 \end{eqnarray*}
Let us recall that this space is compact  when all the spaces $X_{n}$ are compact and  the inverse limit spaces associated to any increasing  subsequences $(n_{i})_{i}$ of indices  are homeomorphic.

In a similar way, we denote for a sequence of maps $g_{n} \colon X_{n} \to X_{n+1},  n\ge 0$ the  associated {\em direct limit} by
\begin{eqnarray*}
\lim_{\rightarrow n} (X_{n},g_{n})  &= &\xymatrix{
{X_{0}}\ar@{->}[r]^{g_0} & {X_{1}}\ar@{->}[r]^{g_1} & {X_{2}}\ar@{->}[r]^{g_2}& {\cdots}}\\
\ &:= &  \{ (x,n), x \in X_{n} , \ n\ge 0 \}/\sim,
\end{eqnarray*}

where  two elements are equivalent $ (x,n) \sim(y,m)$ if and only
if there exists $k \ge m,n$ such that $ g_{k}\circ \ldots \circ
g_{n}(x) = g_{k}\circ \ldots \circ g_{m}(x)$. We denote by $[x,n]$
the equivalence class of $(x,n)$. When the maps $g_{n}$ are
homomorphisms on groups $X_{n}$, then the direct limit inherits a group
structure.

\subsection{Odometers.}
A group $G$ is said to be {\em residually finite} if there exists a
nested sequence $(\Gamma_n)_{n \ge 0}$  of finite index normal
subgroups such that $\bigcap_{n \ge 0} \Gamma_n$ is trivial.  For every $n\ge 0$, there
exists then a canonical projection $\pi_n \colon
G/\Gamma_{n+1} \to G/\Gamma_{n}$. The  $G$-{\em odometer} or {\em
adding machine} $O$ associated to the sequence $(\Gamma_n)_n$ is
the inverse limit
\begin{eqnarray*}
  O := \lim_{\leftarrow n} (G/\Gamma_{n},\pi_{n}) &= &\xymatrix{
{G/\Gamma_{0}}\ar@{<-}[r]^{\pi_{0}} &
{G/\Gamma_{1}}\ar@{<-}[r]^{\pi_{1}} &
{G/\Gamma_{2}}\ar@{<-}[r]^{\pi_{2}}& {\cdots}}.
\end{eqnarray*}
We refer to \cite{CP} for the  basic properties of such a space.
Let us recall that it inherits a group structure through the
quotient groups $G/\Gamma_{n}$ and it contains $G$ as a subgroup
thanks the injection $G \ni g \mapsto ([g]_n) \in O$, where
$[g]_n$ denotes the class of $g $ in $G/\Gamma_{n}$. Thus the
group $G$ acts by left multiplication on $O$. When there is no
confusion, we call  this action also odometer. It is
equicontiuous, minimal and the  left Haar measure is the unique
invariant probability measure. Notice that this action is free:
the stabilizer of any point is trivial. The Toeplitz $G$-subshifts
are characterized as the subshifts that are minimal almost 1-1
extensions of $G$-odometers \cite{CP}.

\subsection{Ordered groups} 
For more details about ordered groups and dimension groups we
refer to \cite{E} and \cite{Go}.

An {\it ordered group} is a pair
$(H,H^+)$, such that $H$ is a countable abelian group and $H^+$ is
a subset of $H$ verifying $(H^+)+(H^+)\subseteq H^+$,
$(H^+)+(-H^+)=H$ and $(H^+)\cap (-H^+)=\{0\}$ (we use $0$ as the
unit of $H$ when $H$ is abelian). An  ordered group $(H,H^+)$ is a
{\it dimension group} if for every $n\in\NN$ there exist $k_n\geq
1$ and a positive homomorphism $A_n:\ZZ^{k_n}\to \ZZ^{k_{n+1}}$,
such that $(H,H^+)$ is isomorphic to $(J,J^+)$, where $J$ is the
direct limit
$$
\lim_{\longrightarrow n}(\ZZ^{k_n},A_n)=\xymatrix{
{\ZZ^{k_0}}\ar@{->}[r]^{A_0} & {\ZZ^{k_1}}\ar@{->}[r]^{A_1} &
{\ZZ^{k_2}}\ar@{->}[r]^{A_2}& {\cdots}},
$$
and $J^+=\{[v,n]: a\in (\ZZ^+)^{k_n}, n\in\NN\}$. The dimension group is
{\it simple} if the matrices $A_n$ can be chosen strictly
positive.

An {\it order  unit} in the ordered group $(H,H^+)$ is an element
$u\in H^+$ such that for every $g\in H$ there exists $n\in \NN$
such that $nu-g \in H^+$. If $(H,H^+)$ is a simple dimension group
then each element in $H^+\setminus\{0\}$ is an order unit. A {\it
unital ordered group} is a triple $(H,H^+,u)$ such that $(H,H^+)$
is an ordered group and $u$ is an order unit. An isomorphism
between two unital ordered groups $(H,H^+,u)$ and $(J,J^+,v)$ is
an isomorphism  $\phi:H\to J$ such that $\phi(H^+)=J^+$ and
$\phi(u)=v$. A {\it state} of the unital ordered group $(H,H^+,
u)$ is a homomorphism $\phi: H\to \RR$ so that $\phi(u)=1$ and
$\phi(H^+)\subseteq \RR^+$. The {\it infinitesimal subgroup} of a simple dimension group with unit
$(H,H^+,u)$ is
$$
\inf(H)=\{a\in H: \phi(a)=0 \mbox{ for all state } \phi \}.
$$
It is not difficult to show that $\inf(H)$ does not depend on the
order unit.

The quotient group $H/\inf(H)$ of a simple dimension group
$(H,H^+)$ is also a simple dimension group with positive cone
$$(H/\inf(H))^+=\{[a]: a\in H^+\}.$$

The next result  is well-known.  The proof is left to the reader.

\begin{lemma}\label{nexo}
Let $(H,H^+)$ be a simple dimension group equals  to
the direct limit
$$
\lim_{\rightarrow n}(\ZZ^{k_n},M_n)=\xymatrix{
{\ZZ^{k_0}}\ar@{->}[r]^{M_0} & {\ZZ^{k_1}}\ar@{->}[r]^{M_1} &
{\ZZ^{k_2}}\ar@{->}[r]^{M_2}& {\cdots}}.
$$
Then for every $z=({z}_n)_{n \geq 0}$ in the inverse limit
$$
\lim_{\leftarrow n}((\RR^+)^{k_n},M_n^T)=\xymatrix{
{(\RR^+)^{k_0}}\ar@{<-}[r]^{M_0^T} &
{(\RR^+)^{k_1}}\ar@{<-}[r]^{M_1^T} &
{(\RR^+)^{k_2}}\ar@{<-}[r]^{M_2^T}& {\cdots}},
$$
the function $\phi_z: H\to \RR$  given by
$\phi([n,{v}])=<{v},{z}_n>$,  for every
$[n,{v}]\in H,$ is well defined and  is a homomorphism of
groups such that $\phi_z(H^+)\subseteq \RR^+$. Conversely, for
every group homomorphism $\phi: H\to \RR$ such that
$\phi(H^+)\subseteq \RR^+$, there exists a unique $z\in
\lim_{\leftarrow n}((\RR^+)^{k_n},M_n^T)$ such that $\phi=\phi_z$.
\end{lemma}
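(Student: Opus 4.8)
The plan is to exploit the adjunction $\langle M_n v, w\rangle = \langle v, M_n^{T} w\rangle$ together with the two compatibility relations defining the limits. First I would fix notation: for $n \le m$ write $M_{[n,m]} := M_{m-1}\circ \cdots \circ M_n \colon \ZZ^{k_n} \to \ZZ^{k_m}$ (with $M_{[n,n]}$ the identity). Then every element of $H$ is of the form $[n,v]$ with $v \in \ZZ^{k_n}$, one has $[n,v] = [m, M_{[n,m]}v]$ for all $m \ge n$, and $(n,v)\sim(m,w)$ with $n\le m$ means exactly that $M_{[n,\ell]}v = M_{[m,\ell]}w$ for some $\ell \ge m$. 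Dually, if $z = (z_n)_n \in \lim_{\leftarrow n}((\RR^{+})^{k_n}, M_n^{T})$ then $z_n = M_n^{T} z_{n+1}$ and hence $z_n = M_{[n,m]}^{T} z_m$ for every $m\ge n$.

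For the first assertion I would verify well-definedness of $\phi_z$ by a direct computation: if $M_{[n,\ell]}v = M_{[m,\ell]}w$ then
\[
\langle v, z_n\rangle = \langle v, M_{[n,\ell]}^{T} z_\ell\rangle = \langle M_{[n,\ell]}v, z_\ell\rangle = \langle M_{[m,\ell]}w, z_\ell\rangle = \langle w, M_{[m,\ell]}^{T} z_\ell\rangle = \langle w, z_m\rangle .
\]
Additivity of $\phi_z$ follows by representing any two given classes at a common level $\ell$ and using bilinearity of the pairing on $\ZZ^{k_\ell}\times\RR^{k_\ell}$, and the inclusion $\phi_z(H^{+})\subseteq \RR^{+}$ is immediate since $H^{+} = \{[n,v] : v \in (\ZZ^{+})^{k_n}\}$ and $z_n \in (\RR^{+})^{k_n}$.

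For the converse, given a homomorphism $\phi\colon H \to \RR$ with $\phi(H^{+})\subseteq\RR^{+}$, I would write $\iota_n\colon \ZZ^{k_n}\to H$ for the canonical map $v\mapsto [n,v]$ and note that $\phi\circ\iota_n$ is a homomorphism from the free abelian group $\ZZ^{k_n}$ to $\RR$, hence is of the form $v\mapsto\langle v, z_n\rangle$ for the unique vector $z_n\in\RR^{k_n}$ whose $i$-th coordinate is $\phi([n,e_i])$. Since $e_i\in(\ZZ^{+})^{k_n}$ gives $[n,e_i]\in H^{+}$, positivity of $\phi$ yields $z_n\in(\RR^{+})^{k_n}$; and from $\iota_n = \iota_{n+1}\circ M_n$ we get $\langle v, z_n\rangle = \langle M_n v, z_{n+1}\rangle = \langle v, M_n^{T} z_{n+1}\rangle$ for all $v$, so $z_n = M_n^{T} z_{n+1}$ and $z := (z_n)_n$ lies in the asserted inverse limit. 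Finally $\phi = \phi_z$ because the two homomorphisms agree on each $\iota_n(\ZZ^{k_n})$, and these subgroups exhaust $H$; uniqueness is immediate, since $\phi_z = \phi_{z'}$ forces $z_n = z'_n$ for all $n$ by testing on the standard basis.

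I do not expect a real obstacle: the statement is in essence the functoriality of $\mathrm{Hom}(-,\RR)$ applied to a direct limit of free abelian groups, together with the bookkeeping of positivity; in particular the simplicity of $H$ plays no role. The only points requiring care are purely notational — matching the index $\ell$ in the equivalence relation with the tower $M_{[n,\ell]}$, and keeping track of which copy of $\RR^{k_n}$ a given pairing is computed in.
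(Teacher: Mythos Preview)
Your argument is correct and is precisely the standard verification one would expect; the paper itself does not give a proof of this lemma, noting only that the result is well-known and leaving the proof to the reader. There is nothing to compare against, and your remark that simplicity of $(H,H^{+})$ is not used is accurate.
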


The following lemma is a preparatory lemma to prove  Theorem A and B. 

\begin{lemma}\label{nexo2}
Let $(H,H^+,u)$ be a simple dimension group with unit given by the following  
 direct limit
$$
\lim_{\rightarrow n}(\ZZ^{k_n},A_n)=\xymatrix{
{\ZZ^{}}\ar@{->}[r]^{A_0} & {\ZZ^{k_1}}\ar@{->}[r]^{A_1} &
{\ZZ^{k_2}}\ar@{->}[r]^{A_2}& {\cdots}},
$$
with unit $u=[1,0]$.  Suppose that  $A_n> 0$ for every $n\geq 0$. 
Then $(H,H^+, u)$ is isomorphic to  
$$
\xymatrix{
{\ZZ^{}}\ar@{->}[r]^{\tilde{A}_0} & {\ZZ^{k_1+1}}\ar@{->}[r]^{\tilde{A}_1} &
{\ZZ^{k_2+1}}\ar@{->}[r]^{\tilde{A}_2}& {\cdots}},
$$
where  $\tilde{A}_0$ is the $(k_1+1)\times 1$-dimensional matrix given by
$$
\tilde{A}_0=\left( \begin{array}{c}
                               A_0(1,\cdot)\\
                              A_0(1,\cdot)\\
                              A_0(2, \cdot)\\
                              \vdots\\
                              A_0(k_1,\cdot)
                                  \end{array}
  \right),
$$
and $\tilde{A}_n$ is the $(k_{n+1}+1)\times (k_n+1)$ dimensional matix given by
$$
\tilde{A}_n=\left(\begin{array}{ccccc }
                     1 & A_n(1,1)-1 & A_n(1,2) & \cdots & A_n(1,k_n)\\
                     1 & A_n(1,1)-1 & A_n(1,2) & \cdots & A_n(1,k_n)\\
                     1 & A_n(2,1)-1 & A_n(2,2) & \cdots & A_n(2,k_n)\\
                     \vdots & \vdots & \vdots &  & \vdots\\
                   1& A_n(k_{n+1},1)-1 & A_n(k_{n+1},2) & \cdots & A_n(k_{n+1}, k_n)\\                      
\end{array}
                      \right), \mbox{ for every } n\geq 0.
$$
 \end{lemma}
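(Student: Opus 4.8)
The plan is to construct an explicit isomorphism of unital ordered groups between the two direct limits by inserting, at each level $n\ge 1$, one extra copy of $\ZZ$ coordinate — the "first row is duplicated" — and to check that the transition matrices $\tilde A_n$ are compatible with the $A_n$ through this insertion. First I would fix the obvious candidate map $\iota_n\colon \ZZ^{k_n}\to\ZZ^{k_n+1}$ for $n\ge 1$ given by $(v_1,v_2,\dots,v_{k_n})\mapsto(0,v_1,v_2,\dots,v_{k_n})$, i.e. putting a $0$ in the new first slot and shifting, and for $n=0$ the identity $\ZZ\to\ZZ$. The main computational step is then the commutation identity $\iota_{n+1}\circ A_n = \tilde A_n\circ\iota_n$ for $n\ge 1$ and $\tilde A_0 = \iota_1\circ A_0$ for $n=0$. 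For $n\ge 1$: applying $\tilde A_n$ to $\iota_n(v)=(0,v_1,\dots,v_{k_n})$, the first column (the all-ones column) is killed by the leading $0$, and the remaining columns reproduce, in rows $2,\dots,k_{n+1}+1$, exactly $A_n v$, while rows $1$ and $2$ coincide; so the output is $(\,(A_nv)_1,(A_nv)_1,(A_nv)_2,\dots,(A_nv)_{k_{n+1}}) = \iota_{n+1}(A_nv)$, as desired. The $n=0$ case is the same bookkeeping with $\tilde A_0$ literally defined as the duplicated-first-row version of $A_0$. Hence the $\iota_n$ induce a well-defined group homomorphism $\Phi$ between the direct limits; since the subsequence of the target obtained by composing $\tilde A_{n+1}\circ\tilde A_n$ visibly has range inside the image of $\iota_{n+2}$ (the new coordinate being redundant: it always equals the second coordinate after one application of any $\tilde A$), $\Phi$ is an isomorphism of abelian groups, with inverse induced by forgetting the first coordinate.

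Next I would check the order and the unit are preserved. For the unit: $u=[1,0]$ on the left, and $\iota_0=\mathrm{id}$, so $\Phi(u)=[1,0]$ in the target, which is its distinguished unit by hypothesis. For the positive cone: one direction is immediate, since $\iota_n$ maps $(\ZZ^+)^{k_n}$ into $(\ZZ^+)^{k_n+1}$, so $\Phi(H^+)$ lands in the positive cone of the target. For the reverse inclusion I would use that after one application of a strictly positive $\tilde A_n$ any element of the target at level $n+1$ is represented by a vector whose first two coordinates agree and whose "tail" is $A_n$ applied to something; more precisely, an arbitrary positive element $[w,n+1]$ of the target is, up to moving along the limit, of the form $\tilde A_n(\text{positive vector})$, and such a vector is $\iota_{n+1}$ of the positive vector $((\,\cdot\,)_1,(\,\cdot\,)_2,\dots)$ obtained by deleting the first coordinate — positivity of that shorter vector follows because it equals an honest $A_n$-image, using $A_n>0$. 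Thus $\Phi$ is surjective onto the positive cone as well, and $\Phi$ is an isomorphism of unital ordered groups.

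I expect the only genuinely delicate point to be the positive-cone surjectivity: one must be careful that an element of the target positive cone need not, at a fixed level, be in the image of $\iota_{n}$ of a positive vector — the first coordinate could be "too small" — and the fix is to push forward one step along the limit using $A_n>0$, so that strict positivity of $A_n$ forces the relevant tail vector to be strictly positive and the first, duplicated coordinate to match. Everything else — well-definedness on equivalence classes, the homomorphism property, compatibility with the maps — reduces to the single matrix identity above and the elementary observation that $\tilde A_n$ is, by construction, "$A_n$ with its first row duplicated and a redundant leading column recording that duplication". I would write out the matrix identity once in full and leave the class-level verifications as routine.
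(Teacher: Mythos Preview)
Your commutation identity $\iota_{n+1}\circ A_n=\tilde A_n\circ\iota_n$ is \emph{false} for the map $\iota_n(v_1,\dots,v_{k_n})=(0,v_1,\dots,v_{k_n})$ you wrote down. You overlooked the ``$-1$'' in the second column of $\tilde A_n$: applying $\tilde A_n$ to $(0,v_1,\dots,v_{k_n})$ gives, in row $i+1$,
\[
0+(A_n(i,1)-1)v_1+\sum_{j\ge 2}A_n(i,j)v_j=(A_nv)_i-v_1,
\]
so $\tilde A_n\iota_n(v)=\bigl((A_nv)_1-v_1,(A_nv)_1-v_1,(A_nv)_2-v_1,\dots\bigr)$, while $\iota_{n+1}(A_nv)=(0,(A_nv)_1,(A_nv)_2,\dots)$. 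These differ, the squares do not commute, and hence your $\Phi$ is not even well defined on the direct limit. (Your displayed output ``$((A_nv)_1,(A_nv)_1,\dots)$'' matches neither side.) The fix is to take instead the \emph{duplication} map
\[
\iota_n(v_1,\dots,v_{k_n})=(v_1,v_1,v_2,\dots,v_{k_n});
\]
with this choice the identity $\tilde A_n\iota_n=\iota_{n+1}A_n$ holds on the nose for $n\ge 1$ and $\tilde A_0=\iota_1A_0$, and the rest of your outline goes through.

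Once repaired, your argument is essentially a hands-on version of the paper's. The paper writes $A_n=B_nM_n$ and $\tilde A_n=M_{n+1}B_n$ for explicit matrices $M_n$, $B_n$; the matrix $M_n$ is exactly your (corrected) $\iota_n$. This exhibits both direct limits as telescopings of the single diagram
\[
\ZZ\xrightarrow{A_0}\ZZ^{k_1}\xrightarrow{M_1}\ZZ^{k_1+1}\xrightarrow{B_1}\ZZ^{k_2}\xrightarrow{M_2}\cdots,
\]
and then invokes the standard fact that contractions of a Bratteli diagram yield isomorphic unital dimension groups. The advantage of the paper's route is that injectivity, surjectivity, preservation of the positive cone, and preservation of the unit all come for free from that standard fact, whereas you have to verify each of these by hand (and your ``delicate point'' about positive-cone surjectivity indeed requires the extra push along the limit that the factorization handles automatically).
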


\begin{proof}
For $n\geq 1$,  consider $M_n$ the $(k_n+1)\times k_n$-dimensional matrix given by
$$
M_n(\cdot,k)=\left\{ \begin{array}{lll}
                                 \vec{e}_{n,1}+\vec{e}_{n,2} & \mbox{ if } & k=1\\
                                               \vec{e}_{k+1} & \mbox{ if } & 3\leq k \leq k_n\\     
                                   \end{array}\right.,
$$
where $\vec{e}_{n,1}, \cdots, \vec{e}_{n,k_n+1}$ are the canonical vectors in $\RR^{k_n+1}$.
Let $B_n$ be the $k_{n+1}\times (k_n+1)$-dimensional matrix defined by
$$
B_n(i,j)=\left\{ \begin{array}{lll}
                         1 &\mbox{ if } &  j=1\\
                           A_n(i,1)-1 &  \mbox{ if } & j=2\\
                          A_n(i,j-1) &\mbox{ if } & 3\leq j\leq k_n+1\\
\end{array}
\right.
$$
We have  $A_n=B_nM_n$ and $\tilde{A}_n=M_{n+1}B_n$  for every $n\geq 1$, and $\tilde{A}_0=M_1A_0$.   

Thus the Bratteli diagrams defined by the sequences of matrices $(A_n)_{n\geq 0 }$ 
 and  $(\tilde{A}_n)_{n\geq 0}$  are contractions  of the same diagram. This shows that the respective dimension groups with unit are isomorphic (see \cite{GPS} or \cite{Du}).
 \end{proof}

\subsection{Associated ordered group and orbit equivalence.}
Let $(X,T,G)$ be a topological dynamical system such that $X$ is a
Cantor set and $T$ is minimal. The ordered  group associated to
$(X,T,G)$ is the unital ordered group $$\G(X,T,
G)=(D_m(X,T,G),D_m(X,T,G)^+,[1]),$$ where
$$
D_m(X, T,G)= C(X,\ZZ)/\{f\in C(X,\ZZ): \int fd\mu =0, \forall
\mu\in \M(X,T,G)\},
$$
$$
D_m(X,T,G)^+=\{[f]: f\geq 0\},
$$
and $[1]\in D_m(X,T,G)$ is the class of the constant function $1$.
\medskip

Two topological dynamical systems $(X_1,T_1,G_1)$ and
$(X_2,T_2,G_2)$ are (topologically) {\it orbit equivalent} if there
exists a homeomorphism $F:X_1\to X_2$ such that
$F(o_{T_1}(x))=o_{T_2}(F(x))$ for every $x\in X_1$.

In \cite{GMPS} the authors show the following algebraic caracterization of orbit equivalence.
\begin{theorem}[\cite{GMPS}, Theorem 2.5] \label{theorem1}Let  $(X,T, \ZZ^d) $ and $(X', T', \ZZ^m)$ be two minimal actions  on the Cantor set. Then they are orbit equivalent if and only if 
$$ \G(X,T, \ZZ^d) \simeq \G(X',T', \ZZ^m)$$
as isomorphism of unital ordered group. 
\end{theorem}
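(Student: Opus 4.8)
The plan is to prove the two implications separately, with essentially all of the difficulty concentrated in reducing the $\ZZ^d$-case to the classical $\ZZ$-case.

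First I would establish necessity, which is elementary. Given an orbit equivalence $F\colon X\to X'$, the map $f\mapsto f\circ F^{-1}$ is a group isomorphism $C(X,\ZZ)\to C(X',\ZZ)$ sending the constant function $1$ to itself and non-negative functions to non-negative functions. To see that it descends to the quotients defining $\G$, I would observe that $F$ conjugates the orbit equivalence relations, and that for these systems a probability measure is $T$-invariant if and only if it is invariant under the whole orbit relation: indeed any partial homeomorphism whose graph lies in the orbit relation decomposes into finitely many clopen pieces on each of which it coincides with a single $T^g$. Hence $\nu\mapsto F_*\nu$ is an affine homeomorphism from $\M(X,T,\ZZ^d)$ onto $\M(X',T',\ZZ^m)$, so $f\mapsto f\circ F^{-1}$ carries the subgroup of functions integrating to zero against every invariant measure onto the analogous subgroup, and thus induces an isomorphism $\G(X,T,\ZZ^d)\simeq\G(X',T',\ZZ^m)$ of unital ordered groups.

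For sufficiency the main step, and the hard part, is to show that every minimal Cantor $\ZZ^d$-system is topologically orbit equivalent to a minimal Cantor $\ZZ$-system. I would do this by exploiting the small boundary property of minimal $\ZZ^d$-actions: there is a basis of clopen sets whose orbit-boundaries are null for every invariant measure. Using such sets I would build a nested sequence of Rokhlin-type tower partitions with negligible boundaries, so that, after deleting a meagre set of zero measure, the orbit relation becomes an increasing union of compact open subrelations, i.e.\ an affable ($\mathrm{AF}$) equivalence relation; the absorption theorem then guarantees that restoring the thin boundary set does not change the orbit relation up to orbit equivalence, and the resulting affable relation is realized, up to orbit equivalence, by a minimal Cantor $\ZZ$-system. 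Call $(Y,S,\ZZ)$ and $(Y',S',\ZZ)$ the $\ZZ$-systems so obtained from $(X,T,\ZZ^d)$ and $(X',T',\ZZ^m)$.

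Finally I would chain everything together. By the necessity direction already proved, orbit equivalence preserves $\G$, so $\G(Y,S,\ZZ)\simeq\G(X,T,\ZZ^d)$ and $\G(Y',S',\ZZ)\simeq\G(X',T',\ZZ^m)$; by hypothesis the right-hand sides are isomorphic, whence $\G(Y,S,\ZZ)\simeq\G(Y',S',\ZZ)$ as unital ordered groups. Invoking the Giordano--Putnam--Skau classification of minimal Cantor $\ZZ$-systems up to orbit equivalence (the case $d=m=1$ of the present statement), $(Y,S,\ZZ)$ and $(Y',S',\ZZ)$ are orbit equivalent; composing with the two orbit equivalences produced above yields an orbit equivalence between $(X,T,\ZZ^d)$ and $(X',T',\ZZ^m)$. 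The serious obstacle in this programme lies entirely in the middle step: establishing the small boundary property for $\ZZ^d$-actions and carrying out the tower construction together with the absorption theorem to make the orbit relation affable.
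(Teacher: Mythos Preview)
The paper does not prove this statement at all: Theorem~\ref{theorem1} is simply quoted from \cite{GMPS} (as Theorem~2.5 there) and used as a black box in the proof of Theorem~B. There is therefore no ``paper's own proof'' against which to compare your proposal.

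That said, your outline is a fair high-level summary of the actual strategy of \cite{GMPS}. The necessity direction is correct and elementary, essentially as you wrote it. For sufficiency, the heart of the matter is indeed to show that every minimal Cantor $\ZZ^d$-action is orbit equivalent to a $\ZZ$-action, and the route through affable equivalence relations and the absorption theorem is precisely the one taken by Giordano--Matui--Putnam--Skau. Be aware, though, that what you compress into one paragraph is the content of several long papers: the absorption theorem itself is a substantial result, and the construction showing that the orbit relation of a minimal $\ZZ^d$-action is affable proceeds by a delicate induction on $d$ (the case $d=2$ having been settled earlier) rather than by a direct appeal to a general small-boundary property. So your proposal identifies the right architecture, but the phrase ``the serious obstacle lies entirely in the middle step'' understates just how much work that step requires; it is not something one could reasonably supply within the present paper, which is why the authors simply cite it.
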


\section{Suitable F\o lner sequences.}\label{suitable-sequence}

Let $G$ be a  residually finite group, and  let $(\Gamma_n)_{n\geq 0}$
be a  nested sequence of finite index normal subgroup of $G$ such that
$\bigcap_{n\geq 0}\Gamma_n=\{e\}$.

For technical reasons it is important to notice that since the groups $\Gamma_n$ are normal,
we have $g\Gamma_n=\Gamma_ng$,  for every  $g\in G$.

To construct a Toeplitz $G$-subshift  that is an almost 1-1 extension of the odometer defined by the sequence $(\Gamma_n)_n$,
 we need a ``suitable'' sequence $(F_n)_n$ of fundamental domains of $G/\Gamma_n$. More precisely, each $F_{
n+1} $ has to be tileable by translated copies of $F_n$.
To control the simplex of invariant measures of the subshift, we need in addition the sequence $(F_n)_n$ to be F\o lner.
We did not find in the specialized litterature  a result  ensuring these conditions.

\subsection{Suitable sequence of fundamental domains.}

Let $\Gamma$ be a normal subgroup of $G$. By a {\it fundamental domain} of $G/\Gamma$, we mean a subset $D\subseteq G$ containing exactly one representative element of each equivalence class in $G/\Gamma$.

\begin{lemma} \label{folner0}
Let $(D_n)_{n\geq 0}$ be an increasing sequence of finite subsets
of $G$  such that for every $n\geq 0$, $e\in D_n$ and $D_n$ is a
fundamental domain of $G/\Gamma_n$. Let $(n_i)_{i\geq 0}\subseteq
\NN$ be an increasing sequence. Consider  $(F_i)_{i\geq 0}$
defined by $F_0=D_{n_0}$ and
$$F_{i}=\bigcup_{v\in D_{n_i}\cap \Gamma_{n_{i-1}}}vF_{i-1} \mbox{ for every } i\geq 1.$$
Then for every $i\geq 0$ we have the following:
\begin{enumerate}
\item $F_i\subseteq F_{i+1}$ and $F_i$ is a fundamental domain of
$G/\Gamma_{n_i}$.

\item $F_{i+1}=\bigcup_{v\in F_{i+1}\cap \Gamma_{n_i}} vF_i$.
\end{enumerate}
\end{lemma}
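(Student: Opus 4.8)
The plan is to prove both statements simultaneously by induction on $i$, since statement~(2) is essentially a reformulation of the defining recursion once one knows that the pieces $vF_{i-1}$ are pairwise disjoint and that $F_{i-1}$ is a fundamental domain of $G/\Gamma_{n_{i-1}}$. The base case $i=0$ is trivial: $F_0=D_{n_0}$ is a fundamental domain of $G/\Gamma_{n_0}$ by hypothesis, and statement~(2) at level $i=0$ is vacuous (or reads $F_0=\bigcup_{v\in F_0\cap\Gamma_{n_{-1}}}vF_{-1}$, which we simply do not assert). For the inductive step, assume $F_{i-1}$ is a fundamental domain of $G/\Gamma_{n_{i-1}}$ and argue as follows.

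First I would check that $F_i$ is a complete set of representatives for $G/\Gamma_{n_i}$, i.e. every class is hit at least once. Given $g\in G$, since $D_{n_{i-1}}$ is a fundamental domain of $G/\Gamma_{n_{i-1}}$ (wait --- one should use $F_{i-1}$ here, which by induction is such a fundamental domain), write $g=w\gamma$ with $w\in F_{i-1}$ and $\gamma\in\Gamma_{n_{i-1}}$; here I use normality of $\Gamma_{n_{i-1}}$ to move the $\Gamma$-part to the right, as remarked before the lemma. Now $\gamma\in\Gamma_{n_{i-1}}$, and $D_{n_i}$ is a fundamental domain of $G/\Gamma_{n_i}$, so $\gamma=v\delta$ with $v\in D_{n_i}$, $\delta\in\Gamma_{n_i}$; moreover $v=\gamma\delta^{-1}\in\Gamma_{n_{i-1}}$ since $\Gamma_{n_i}\subseteq\Gamma_{n_{i-1}}$, hence $v\in D_{n_i}\cap\Gamma_{n_{i-1}}$. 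Then $g=wv\delta$ with $wv\in vF_{i-1}\subseteq F_i$ (again using normality to write $wv=vw'$, $w'\in\Gamma_{n_{i-1}}$-translate --- this needs a small argument, or better: $w\in F_{i-1}$ and $v\in\Gamma_{n_{i-1}}$, so I want $vw\in vF_{i-1}$, which is immediate, but the recursion is written as $vF_{i-1}$, so I should track left versus right translates carefully). So every class of $G/\Gamma_{n_i}$ meets $F_i$.

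Next I would show the representative is \emph{unique}, equivalently that $|F_i|=[G:\Gamma_{n_i}]$ by a counting argument: the union defining $F_i$ is disjoint. Suppose $v_1F_{i-1}$ and $v_2F_{i-1}$ overlap, say $v_1w_1=v_2w_2$ with $v_1,v_2\in D_{n_i}\cap\Gamma_{n_{i-1}}$ and $w_1,w_2\in F_{i-1}$. Reducing mod $\Gamma_{n_{i-1}}$: since $w_1,w_2\in\Gamma_{n_{i-1}}$ (as $F_{i-1}$ is a fundamental domain containing $e$, its elements need not be in $\Gamma_{n_{i-1}}$ --- correction: $F_{i-1}$ is a fundamental domain of $G/\Gamma_{n_{i-1}}$, so its only element in $\Gamma_{n_{i-1}}$ is $e$). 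Hence $v_1\equiv v_2 \pmod{\Gamma_{n_{i-1}}}$, and since $v_1,v_2\in\Gamma_{n_{i-1}}$ this gives nothing; instead reduce mod $\Gamma_{n_i}$. We get $w_1\equiv w_2\pmod{\Gamma_{n_i}}$ --- no, the cleanest route is: from $v_1w_1=v_2w_2$ get $v_2^{-1}v_1=w_2w_1^{-1}$; the left side lies in $\Gamma_{n_{i-1}}$, hence so does $w_2w_1^{-1}$, and since $w_1,w_2\in F_{i-1}$ and $F_{i-1}$ represents $G/\Gamma_{n_{i-1}}$ bijectively we conclude $w_1=w_2$ and then $v_1=v_2$. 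Disjointness gives $|F_i|=|D_{n_i}\cap\Gamma_{n_{i-1}}|\cdot|F_{i-1}| = [\Gamma_{n_{i-1}}:\Gamma_{n_i}]\cdot[G:\Gamma_{n_{i-1}}]=[G:\Gamma_{n_i}]$, where the count $|D_{n_i}\cap\Gamma_{n_{i-1}}|=[\Gamma_{n_{i-1}}:\Gamma_{n_i}]$ follows because $D_{n_i}$ picks exactly one element from each $\Gamma_{n_i}$-coset, and the $\Gamma_{n_i}$-cosets inside $\Gamma_{n_{i-1}}$ number $[\Gamma_{n_{i-1}}:\Gamma_{n_i}]$. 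Combined with surjectivity, $F_i$ is a fundamental domain. The inclusion $F_i\subseteq F_{i+1}$ follows because $e\in D_{n_{i+1}}\cap\Gamma_{n_i}$ (as $e\in D_{n_{i+1}}$ by hypothesis and $e\in\Gamma_{n_i}$), so $F_i=eF_i$ appears as one of the pieces in the union defining $F_{i+1}$. Finally, statement~(2): the defining recursion gives $F_{i+1}=\bigcup_{v\in D_{n_{i+1}}\cap\Gamma_{n_i}}vF_i$, and I claim the index set equals $F_{i+1}\cap\Gamma_{n_i}$; indeed each such $v$ lies in $vF_i\subseteq F_{i+1}$ (since $e\in F_i$) and in $\Gamma_{n_i}$, and conversely if $g\in F_{i+1}\cap\Gamma_{n_i}$ then $g=vw$ with $v\in D_{n_{i+1}}\cap\Gamma_{n_i}$, $w\in F_i$, whence $w=v^{-1}g\in\Gamma_{n_i}\cap F_i=\{e\}$, so $g=v$.

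The main obstacle is the bookkeeping of left versus right cosets and translates: because the recursion multiplies on the left ($vF_{i-1}$) while normality is what lets one slide $\Gamma$-factors across, one must be disciplined about when an element of $F_{i-1}$ is being treated as a coset representative versus as a group element, and repeatedly invoke $g\Gamma_n=\Gamma_n g$ to keep the decompositions consistent. Everything else is a routine counting/fundamental-domain argument.
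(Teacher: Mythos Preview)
Your proof is correct and follows essentially the same induction as the paper's: both show surjectivity by writing an arbitrary element as $\delta\, v\, w$ with $\delta\in\Gamma_{n_i}$, $v\in D_{n_i}\cap\Gamma_{n_{i-1}}$, $w\in F_{i-1}$ (to make your version work cleanly, start with $g=\gamma w$ rather than $g=w\gamma$, as you yourself flag), and both prove the index-set equality $D_{n_{i+1}}\cap\Gamma_{n_i}=F_{i+1}\cap\Gamma_{n_i}$ exactly as you do. The only minor difference is that you obtain injectivity via disjointness of the pieces $vF_{i-1}$ together with the count $|F_i|=[\Gamma_{n_{i-1}}:\Gamma_{n_i}]\cdot|F_{i-1}|=[G:\Gamma_{n_i}]$, whereas the paper argues directly that two elements of $F_i$ lying in the same $\Gamma_{n_i}$-coset must coincide; the underlying computation ($v_2^{-1}v_1\in\Gamma_{n_{i-1}}$ forces $w_1=w_2$, hence $v_1=v_2$) is identical in both.
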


\begin{proof}
Since $e\in D_{n_i}$, the sequence  $(F_{i})_{i\geq 0}$ is
increasing.

$F_0=D_{n_0}$ is a fundamental domain of $G/\Gamma_{n_0}$. We will
prove by induction on $i$  that $F_i$ is a fundamental domain of
$G/\Gamma_{n_i}$. Let $i>0$ and suppose that $F_{i-1}$ is a
fundamental domain of $G/\Gamma_{n_{i-1}}$.

Let $v\in D_{n_i}$.  There exist  then $u\in F_{i-1}$ and $w\in
\Gamma_{n_{i-1}}$ such that $v=wu$. Let $z\in D_{n_i}$ and
$\gamma\in \Gamma_{n_i}$ be such that $w=\gamma z$. Since $z\in
\Gamma_{n_{i-1}}\cap D_{n_i}$ and $v= \gamma zu$, we conclude
that $F_i$ contains one representing element of each class in
$G/\Gamma_{n_i}$.

Let $w_1,w_2\in F_i$ be such that there exists $\gamma \in
\Gamma_{n_i}$ verifying $w_1=\gamma w_2$. By definition,
$w_1=v_1u_1$ and $w_2=v_2u_2$, for some $u_1,u_2\in F_{i-1}$ and
$v_1,v_2\in D_{n_i}\cap\Gamma_{n_{i-1}}$. This implies that $u_1$
and $u_2$ are in the same class of $G/\Gamma_{n_{i-1}}$. Since
$F_{i-1}$ is a fundamental domain, we have $u_1=u_2$. From this we
get $v_1=\gamma v_2$, which implies that $v_1=v_2$. Thus we deduce
that $F_i$ contains at most one representing element of each class
in $G/\Gamma_{n_i}$. This shows that $F_i$ is a fundamental domain
of $G/\Gamma_{n_i}$.

To show that $D_{n_i}\cap \Gamma_{n_{i-1}}\subseteq F_i\cap \Gamma_{n_{i-1}}$, observe that the definition of $F_i$ implies that for every $v\in  D_{n_i}\cap \Gamma_{n_{i-1}}$ and $u\in F_{i-1}$, $vu\in F_i$. Then for $u=e\in F_{i-1}$ we get $v=ve\in F_i$. Now suppose that $v\in F_i\cap \Gamma_{n_{i-1}}\subseteq F_i$. The definition of $F_i$ implies there exist $u\in F_{i-1}$ and $\gamma\in D_{n_i}\cap \Gamma_{n_{i-1}}$ such that $v=\gamma u$.  Since $v$ and $\gamma$ are in $\Gamma_{n_{i-1}}$,  we get that $u\in \Gamma_{n_{i-1}}\cap F_{i-1}$. This implies that $u=e$ because  $\Gamma_{n_{i-1}}\cap F_{i-1}=\{e\}$.
\end{proof}

{\rm In this paper, by F\o lner sequences we mean right F\o lner
sequences. That is, a sequence  $(F_n)_{n\geq 0}$ of nonempty
finite sets of $G$  is a F\o lner sequence   if   for every $g\in
G$ $$ \lim_{n\to \infty} \frac{|F_ng\triangle F_n|}{|F_n|}=0.
$$
Observe that $(F_n)_{n\geq 0}$ is a right F\o lner sequence if
and only if $(F_n^{-1})_{n\geq 0}$ is a left F\o lner sequence. }

\begin{lemma}\label{folner}
Suppose that $G$ is amenable. There exists an increasing sequence
$(n_i)_{i\geq 0}\subseteq \NN$ and a F\o lner sequence
$(F_{i})_{i\in \NN}$, such that
\begin{enumerate}[i)]

\item $F_i\subseteq F_{i+1}$ and  $F_{i}$ is a fundamental domain
of $G/\Gamma_{n_i}$, for every $i\geq 0$.

\item $G=\bigcup_{i\geq 0}F_{i}.$

\item $F_{i+1}=\bigcup_{v\in F_{i+1}\cap \Gamma_{n_i}} vF_i$, for
every $i\geq 0$.
\end{enumerate}
\end{lemma}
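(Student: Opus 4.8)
The plan is to build the sequences $(n_i)_{i\ge 0}$ and $(F_i)_{i\ge 0}$ by induction, keeping $F_i$ a fundamental domain of $G/\Gamma_{n_i}$ with $e\in F_i$; we may assume $G$ infinite, since otherwise the statement is immediate. Start with $n_0=0$ and any fundamental domain $F_0$ of $G/\Gamma_{0}$ containing $e$. Given $F_{i-1}$, I would choose $n_i>n_{i-1}$ and a transversal $E_i$ of $\Gamma_{n_{i-1}}/\Gamma_{n_i}$ with $e\in E_i$, and set $F_i:=\bigcup_{v\in E_i}vF_{i-1}$. Decomposing $G=\bigsqcup_{u\in F_{i-1}}\Gamma_{n_{i-1}}u$ and $\Gamma_{n_{i-1}}=\bigsqcup_{v\in E_i}\Gamma_{n_i}v$ shows at once that $F_i$ is a fundamental domain of $G/\Gamma_{n_i}$, that $F_{i-1}\subseteq F_i$, and that $F_i\cap\Gamma_{n_{i-1}}=E_i$; so (i) and (iii) hold automatically (this is the content of Lemma~\ref{folner0}), and all the work goes into choosing $n_i,E_i$ so as to also get (ii) and the F\o lner property.

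Fix an enumeration $G=\{g_0=e,g_1,g_2,\dots\}$. For (ii): at step $i$ let $\gamma^{(i)}\in\Gamma_{n_{i-1}}$ be such that $g_i\in\gamma^{(i)}F_{i-1}$; taking $n_i$ large enough that the finitely many elements we wish to put into $E_i$ are pairwise incongruent modulo $\Gamma_{n_i}$, we may demand $\gamma^{(i)}\in E_i$, so that $g_i\in F_i$, and then $\bigcup_i F_i=G$ because $F_j\subseteq F_{j+1}$ for all $j$. For the F\o lner property the key is a counting estimate: each element of $F_i g$ is $vug$ with $v\in E_i$, $u\in F_{i-1}$; writing $ug=\gamma_u w_u$ with $\gamma_u\in\Gamma_{n_{i-1}}$ and $w_u\in F_{i-1}$, one gets $vug\in F_i$ iff $v\gamma_u\in E_i$, and $\gamma_u=e$ for all but $\tfrac12|F_{i-1}g\triangle F_{i-1}|$ of the $u\in F_{i-1}$. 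Summing, this yields
\[
\frac{|F_ig\triangle F_i|}{|F_i|}\ \le\ \tfrac12\cdot\frac{|F_{i-1}g\triangle F_{i-1}|}{|F_{i-1}|}\cdot\max_{\gamma\in\Delta_{i-1}(g)}\frac{|E_i\gamma\triangle E_i|}{|E_i|},
\]
where $\Delta_{i-1}(g):=\{\gamma\in\Gamma_{n_{i-1}}:\gamma F_{i-1}\cap F_{i-1}g\neq\emptyset\}\setminus\{e\}$ is a fixed finite subset of $\Gamma_{n_{i-1}}$. Hence it suffices that, for each $i$, the transversal $E_i$ be chosen so that $|E_i\gamma\triangle E_i|/|E_i|\to 0$ as $i\to\infty$ for all $\gamma$ in the fixed finite set $\widetilde\Delta_i:=\bigcup_{j\le i}\Delta_{i-1}(g_j)$: iterating the displayed inequality then gives $|F_ig_j\triangle F_i|/|F_i|\to 0$ for every $j$, i.e. $(F_i)$ is F\o lner.

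Thus everything reduces to the following, applied with $\Gamma=\Gamma_{n_{i-1}}$ (an infinite, countable, amenable, residually finite group) and the chain $(\Gamma_m)_{m>n_{i-1}}$: \emph{for such a $\Gamma$ with residual chain $(\Lambda_k)$, every finite $\Delta'\subseteq\Gamma$, every finite $P\subseteq\Gamma$, every $\varepsilon>0$ and every $k_0$, there are $k\ge k_0$ and a transversal $T$ of $\Gamma/\Lambda_k$ with $P\cup\{e\}\subseteq T$ and $|T\gamma\triangle T|<\varepsilon|T|$ for all $\gamma\in\Delta'$.} This is the technical heart, and the step I expect to be the main obstacle. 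The first idea is: take a very $(\Delta',\varepsilon)$-invariant F\o lner set $\Phi\ni e$ of $\Gamma$ containing $P$, choose $k$ so large that $\Phi$ injects into $\Gamma/\Lambda_k$, and complete $\Phi$ to a transversal by adjoining one representative of each missed coset. This works only if the completion is a negligible fraction of $\Phi$, i.e. if $|\Phi|$ lies within a small fraction of $[\Gamma:\Lambda_k]$; but translates of a single F\o lner set fill only a bounded-density part of a transversal, and the indices $[\Gamma:\Lambda_k]$ may have arbitrarily large gaps, so one must work harder. The remedy is an Ornstein--Weiss quasi-tiling: for the given $\Delta',\varepsilon$ one produces finitely many $(\Delta',\varepsilon)$-invariant F\o lner sets $B_1,\dots,B_r$ such that, for every $k$ beyond a threshold depending only on these data, a transversal of $\Gamma/\Lambda_k$ is covered up to an $\varepsilon$-fraction by pairwise disjoint, well-separated translates of the $B_l$; a lift of such a covering, enlarged by $P\cup\{e\}$ (negligible once $[\Gamma:\Lambda_k]$ is large) and completed on the uncovered part, is the desired transversal $T$, which is then $(\Delta',O(\varepsilon))$-invariant. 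Equivalently, this is the assertion that a residually finite amenable group is monotileable by a F\o lner sequence of fundamental domains of a prescribed residual chain; transporting the quasi-tiling machinery to the finite quotients and keeping track of cardinalities along the chain is where amenability and the infiniteness of $G$ are genuinely used, and it is the bulk of the argument.
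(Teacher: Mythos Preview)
Your approach is correct, and the counting estimate you display is right (up to harmless constants), but your route is genuinely different from the paper's and considerably heavier.

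The paper does not prove from scratch the existence of F\o lner transversals at each level. Instead it invokes once, for the ambient group $G$, the monotileability theorem of Weiss (as in \cite{Wei}, or \cite[Proposition~4.1]{Kr}): this already supplies an increasing F\o lner sequence $(D_n)_n$ of fundamental domains of $G/\Gamma_n$ with $\bigcup_n D_n=G$. The whole construction is then purely combinatorial: one sets $F_i=\bigcup_{v\in D_{n_i}\cap\Gamma_{n_{i-1}}}vF_{i-1}$, chooses $n_i$ so that $D_{n_{i-1}}\subseteq F_i$ and $|D_{n_i}g\triangle D_{n_i}|/|D_{n_i}|<(i\,|F_{i-1}|)^{-1}$ for $g\in F_{i-1}$, and finishes by showing $|F_i\triangle D_{n_i}|/|D_{n_i}|\le 2/i$, so that $(F_i)_i$ inherits the F\o lner property from $(D_{n_i})_i$. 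In other words, the paper's transversal $E_i$ is simply $D_{n_i}\cap\Gamma_{n_{i-1}}$, and no separate F\o lner estimate on $E_i$ is needed: the comparison with $D_{n_i}$ does all the work.

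What you call the ``technical heart'' of your argument, the existence of $(\Delta',\varepsilon)$-invariant transversals of $\Gamma/\Lambda_k$ containing a prescribed finite set, is exactly the Weiss/Krieger statement the paper cites, so you are effectively re-deriving that theorem via Ornstein--Weiss quasi-tiling and then applying it inside each $\Gamma_{n_{i-1}}$. This is legitimate and more self-contained, and your product inequality $\frac{|F_ig\triangle F_i|}{|F_i|}\le\frac{|F_{i-1}g\triangle F_{i-1}|}{|F_{i-1}|}\cdot\max_\gamma\frac{|E_i\gamma\triangle E_i|}{|E_i|}$ is a nice structural estimate that the paper does not isolate. The cost is that you re-run the quasi-tiling machinery at every inductive step; the paper buys a much shorter proof by externalizing that work to a single citation and replacing it with an elementary comparison argument.
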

\begin{proof}
From \cite[Theorem 1]{Wei} (see  \cite[Proposition 4.1]{Kr} for a proof in our context), there exists an increasing
sequence $(m_i)_{i\geq 0}\subseteq \NN$ and a F\o lner sequence
$(D_{i})_{i\in \NN}$ such that for every $i\geq 0$, $D_i\subseteq
D_{i+1}$,  $D_{i}$ is a fundamental domain of $G/\Gamma_{m_i}$,
and  $G=\bigcup_{i\geq 0}D_{i}$. Up to take subsequences, we
can assume that $D_i$ is a fundamental domain of $G/\Gamma_i$, for
every $i\geq 0$, and that $e\in D_0$.

We will construct the sequences $(n_i)_{i\geq 0}$ and
$(F_n)_{n\geq 0}$ as follows:

\medskip

{\it Step $0$:} We set $n_0=0$ and $F_0=D_0$.

\medskip

{\it Step $i$:}  Let $i>0$. We assume that we have chosen $n_j$
and $F_j$ for every $0\leq j<i$. We take $n_i>n_{i-1}$ in order
that the following two conditions are verified:
\begin{equation}\label{folner-eq1}
\frac{|D_{n_i}g\vartriangle
D_{n_i}|}{|D_{n_i}|}<\frac{1}{i|F_{i-1}|}, \mbox{ for every } g\in
F_{i-1}.
\end{equation}

\begin{equation}\label{folner-eq2}
D_{n_{i-1}}\subseteq \bigcup_{v\in D_{n_i}\cap
\Gamma_{n_{i-1}}}vF_{i-1}.
\end{equation}

Such integer $n_i$ exists  because  $(D_n)_{n\geq 0}$ is a F\o lner
sequence and $F_{i-1}$ is a fundamental domain of
$G/\Gamma_{n_{i-1}}$ (then $G=\bigcup_{v\in
\Gamma_{n_{i-1}}}vF_{i-1}$).

We define $$F_{i}=\bigcup_{v\in D_{n_i}\cap
\Gamma_{n_{i-1}}}vF_{i-1}.$$

Lemma \ref{folner0} ensures that $(F_i)_{i\geq 0}$ verifies {\em i)}
and  {\em iii)} of the lemma.  The equation (\ref{folner-eq2}) implies
that $(F_i)_{i\geq 0}$ verifies  {\em ii)} of the lemma.

It remains to show that $(F_i)_{i\geq 0}$ is a F\o lner sequence.

By definition of $F_i$ we have
$$
\left( F_i\setminus D_{n_i}\right)\subseteq \bigcup_{g\in F_{i-1}}
\left(D_{n_i}g\setminus D_{n_i}\right).
$$
Then by equation (\ref{folner-eq1}) we get
\begin{eqnarray*}
\frac{|F_i\setminus D_{n_i}|}{|D_{n_i}|}& \leq & \sum_{g\in
F_{i-1}}\left(\frac{|D_{n_i}g\setminus D_{n_i}|}{|D_{n_i}|}\right)\\
    &\leq &\left( |F_{i-1}|\frac{1}{i|F_{i-1}|}\right)=\frac{1}{i}.
\end{eqnarray*}

Since $$\left(|F_i\cap D_{n_i}|+|D_{n_i}\setminus
F_i|\right)=|D_{n_i}|=|F_i|=|F_i\cap D_{n_i}|+|F_i\setminus D_{n_i}|,$$
we obtain
$$
\frac{|D_{n_i}\setminus F_i|}{|D_{n_i}|}\leq \frac{1}{i}.
$$

Let $g\in G$. Since
\begin{eqnarray*}
F_i g\setminus F_i & = & \left[ (F_i\cap D_{n_i})g\setminus F_i\right] \bigcup \left[ (F_i\setminus D_{n_i})g\setminus F_i \right] \\
      &\subseteq & \left[ (F_i\cap D_{n_i})g\setminus F_i \right] \bigcup (F_i\setminus D_{n_i})g \\
      &\subseteq & \left[D_{n_i}g\setminus (F_i\cap D_{n_i})  \right] \bigcup (F_i\setminus D_{n_i})g, \\
\end{eqnarray*}
we have
\begin{equation}\label{folner-eq3}
\frac{|F_ig\setminus F_i|}{|F_i|}\leq \frac{|D_{n_i}g\setminus
(F_i\cap D_{n_i})|}{|D_{n_i}|}+\frac{|(F_i\setminus
D_{n_i})g|}{|D_{n_i}|}\leq \frac{|D_{n_i}g \setminus (F_i\cap
D_{n_i})|}{|D_{n_i}|}+\frac{1}{i}.
\end{equation}

On the other hand, the relation
$$D_{n_i}g\setminus D_{n_i}=D_{n_i}g\setminus \left[ (D_{n_i}\cap
F_i)\cup (D_{n_i}\setminus F_i )\right] = \left[D_{n_i}g\setminus(D_{n_i}\cap
F_i)\right] \setminus (D_{n_i}\setminus F_i),
$$
implies that
\begin{eqnarray*}
D_{n_i}g \setminus (F_i\cap D_{n_i})&=& [(D_{n_i}g \setminus
(F_i\cap D_{n_i}))\cap(D_{n_i}\setminus F_i)]\bigcup \left[ (D_{n_i}g
\setminus
(F_i\cap D_{n_i}))\setminus (D_{n_i}\setminus F_i) \right] \\
&=& [(D_{n_i}g \setminus (F_i\cap D_{n_i}))\cap(D_{n_i}\setminus
F_i)]\bigcup \left[ D_{n_i}g\setminus D_{n_i}\right] \\
& \subseteq & (D_{n_i}\setminus F_i) \bigcup  (D_{n_i}g\setminus D_{n_i}),
\end{eqnarray*}
which ensures that
\begin{equation}\label{folner-eq4}
\frac{|D_{n_i}g \setminus (F_i\cap D_{n_i}) |}{|D_{n_i} |}\leq
\frac{|D_{n_i}\setminus F_i|}{|D_{n_i}|}+\frac{|D_{n_i}g\setminus
D_{n_i}|}{|D_{n_i}|}.
\end{equation}
From equations (\ref{folner-eq3}) and (\ref{folner-eq4}), we
obtain
$$
\frac{|F_i g\setminus F_i|}{|F_i|}\leq
\frac{2}{i}+\frac{|D_{n_i}g\setminus D_{n_i}|}{|D_{n_i}|},
$$
which implies
\begin{equation}\label{folner-eq5}
\lim_{i\to\infty}\frac{|F_ig\setminus F_i|}{|F_i|}=0.
\end{equation}
In a similar way we deduce that
$$
F_i\setminus F_ig\subseteq [D_{n_i}\setminus (F_i\cap
D_{n_i})g]\bigcup (F_i\setminus D_{n_i}),
$$
$$
D_{n_i}\setminus D_{n_i}g=[D_{n_i}\setminus (D_{n_i}\cap
F_i)g] \setminus (D_{n_i}\setminus F_i),
$$
and
$$
D_{n_i}\setminus (F_i\cap D_{n_i})g\subseteq (D_{n_i}\setminus
F_i) \bigcup (D_{n_i}\setminus D_{n_i}g).
$$
Combining the last three equations we get
$$
\frac{|F_i\setminus F_i g|}{|F_i|}\leq
\frac{2}{i}+\frac{|D_{n_i}\setminus D_{n_i}g|}{|D_{n_i}|},
$$
which implies
\begin{equation}\label{folner-eq6}
\lim_{i\to\infty}\frac{|F_i\setminus F_ig|}{|F_i|}=0.
\end{equation}
Equations (\ref{folner-eq5}) and (\ref{folner-eq6}) imply that
$(F_i)_{i\geq 0}$ is F\o lner.
\end{proof}

The following result is a direct consequence of Lemma
\ref{folner}.

\begin{lemma}\label{folner2}
Let $G$ be an amenable  residually finite group and let  $(\Gamma_n)_{n\geq 0}$ be
a decreasing sequence of finite index normal subgroups of $G$ such that $\bigcap_{n\geq
0}\Gamma_n=\{e\}$. There exists an increasing sequence
$(n_i)_{i\geq 0}\subseteq \NN$ and a F\o lner sequence $(F_i)_{i\geq 0}$ of
$G$ such that
\begin{enumerate}
\item $\{e\}\subseteq F_i\subseteq F_{i+1}$ and  $F_{i}$ is a
fundamental domain of $G/\Gamma_{n_i}$, for every $i\geq 0$.

\item $G=\bigcup_{i\geq 0}F_{i}.$

\item $F_{j}=\bigcup_{v\in F_{j}\cap \Gamma_{n_i}} vF_i$, for every
$j>i\geq 0$.
\end{enumerate}
\end{lemma}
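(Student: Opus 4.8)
The statement is essentially a restatement of Lemma~\ref{folner}, with the one-step tiling relation in its item (iii) promoted to the multi-step relation in item (3); so the plan is to invoke Lemma~\ref{folner} and then run a short induction. Applying Lemma~\ref{folner} to the given sequence $(\Gamma_n)_{n\ge 0}$ produces an increasing sequence $(n_i)_{i\ge 0}\subseteq\NN$ and a F\o lner sequence $(F_i)_{i\ge 0}$ satisfying items (i), (ii) and (iii) of that lemma. Items (1) and (2) of the present lemma are then immediate from (i) and (ii); the only supplementary point is $\{e\}\subseteq F_i$, and this holds because the construction in the proof of Lemma~\ref{folner} starts from $F_0=D_0$ with $e\in D_0$, while $(F_i)_i$ is increasing.

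For item (3) I would fix $i\ge 0$ and prove, by induction on $j$ ranging over $j>i$, the equality $F_j=\bigcup_{v\in F_j\cap\Gamma_{n_i}}vF_i$. The base case $j=i+1$ is exactly item (iii) of Lemma~\ref{folner}. For the inductive step, assume $j-1>i$ and that the equality holds at level $j-1$; applying item (iii) at the level $j-1\to j$ and then the inductive hypothesis to $F_{j-1}$ gives
$$
F_j=\bigcup_{w\in F_j\cap\Gamma_{n_{j-1}}}wF_{j-1}
   =\bigcup_{w\in F_j\cap\Gamma_{n_{j-1}}}\ \bigcup_{v\in F_{j-1}\cap\Gamma_{n_i}}(wv)F_i .
$$
Since $(n_i)_i$ is increasing and $(\Gamma_n)_n$ is decreasing, $\Gamma_{n_{j-1}}\subseteq\Gamma_{n_i}$, so every product $wv$ occurring above lies in $\Gamma_{n_i}$, and it lies in $F_j$ as well because $wv\in wF_{j-1}\subseteq F_j$; thus $wv\in F_j\cap\Gamma_{n_i}$. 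Conversely, any $u\in F_j\cap\Gamma_{n_i}$ can be written, by item (iii), as $u=wv'$ with $w\in F_j\cap\Gamma_{n_{j-1}}$ and $v'\in F_{j-1}$; then $v'=w^{-1}u\in\Gamma_{n_i}$, so $v'\in F_{j-1}\cap\Gamma_{n_i}$ and $u$ appears in the double union. Hence the set of products $wv$ above is exactly $F_j\cap\Gamma_{n_i}$, which yields the desired equality and closes the induction.

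The only step requiring a bit of care --- and, such as it is, the main obstacle --- is this last identification of the doubly-indexed union with the single union over $F_j\cap\Gamma_{n_i}$, which rests solely on the inclusion $\Gamma_{n_{j-1}}\subseteq\Gamma_{n_i}$ and on the decompositions supplied by Lemma~\ref{folner}. I should also note that the displayed equality automatically yields $vF_i\subseteq F_j$ for every $v\in F_j\cap\Gamma_{n_i}$, since $vF_i\subseteq\bigcup_{v'\in F_j\cap\Gamma_{n_i}}v'F_i=F_j$; so that inclusion need not be verified separately at each stage.
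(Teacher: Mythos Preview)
Your proof is correct and follows exactly the approach the paper takes: invoke Lemma~\ref{folner} to obtain (1), (2), and the one-step case of (3), and then upgrade (3) to arbitrary $j>i$ by induction. The paper's own proof is a two-line sketch (``using induction, it is straightforward''); you have simply supplied the details of that induction, including the verification that the set of products $wv$ coincides with $F_j\cap\Gamma_{n_i}$, which is precisely the point the paper leaves implicit.
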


\begin{proof}
The existence of the sequence of subgroups of $G$ and the F\o lner
sequence verifying (1), (2) and (3) for $j=i+1$ is direct from
Lemma \ref{folner}. Using induction, it is straightforward to show
(3) for every $j>i\geq 0$.
\end{proof}

\section{Kakutani-Rokhlin partitions for generalized Toeplitz subshifts} \label{K-R partitions} 

In this section $G$ is an amenable, countable,  and residually finite group. 

Let $\Sigma$ be a finite alphabet and let $(\Sigma^G,\sigma, G)$ be the respective full $G$-shift.

For a finite index subgroup $\Gamma$ of $G$, $x\in\Sigma^G$ and $a\in \Sigma$, we define
$$
Per(x,\Gamma,a)=\{g\in G: \sigma^{\gamma}(x)(g)=x(\gamma^{-1}g)=a, \forall \gamma\in \Gamma \},
$$
and  $Per(x,\Gamma)=\bigcup_{a\in\Sigma} Per(x,\Gamma,a).$

It is straightforward to show that  $x\in\Sigma^G$ is a Toeplitz sequence if and only if there exists an increasing  sequence
$(\Gamma_n)_{n\geq 0}$ of finite index subgroups of $G$ such that $G=\bigcup_{n\geq 0}Per(x,\Gamma_n)$  (see  \cite[Proposition 5]{CP}).

A {\it period structure} of $x\in\Sigma^{G}$ is an increasing sequence of finite index subgroups $(\Gamma_n)_{n\geq 0}$ of $G$ such that 
$G=\bigcup_{n\geq 0}Per(x,\Gamma_n)$ and such that for every $n\geq 0$, $\Gamma_n$ is an {\it essential group of periods}: 
This  means   that if $g\in G$ is such that $Per(x,\Gamma_n,a)\subseteq Per(\sigma^g(x),\Gamma_n,a)$ for every $a\in\Sigma$, 
then $g\in\Gamma_n$.

It is known that every Toeplitz sequence has a period structure (see for example \cite[Corollary 6]{CP}). We construct in this section,
thanks the period structure, a Kakutani-Rokhlin partition  and we deduce a characterization of its ordered group.

\subsection{Existence of Kakutani-Rokhlin partitions.}
In this subsection we suppose that $x_0\in \Sigma^{G}$ is a non-periodic Toeplitz sequence ($\sigma^g(x_0)=x_0$ implies $g=e$) 
having a period structure $(\Gamma_n)_{n\geq 0}$ such that for every $n\geq 0$,
\begin{itemize}
\item[(i)] $\Gamma_{n+1}$ is a proper subset of $\Gamma_{n}$,

\item[(ii)] $\Gamma_n$ is a normal subgroup of $G$.
\end{itemize}

Every non-periodic Toeplitz sequence has a period structure verifying (i) \cite[Corollary 6]{CP}.
Condition (ii) is satisfied for every Toeplitz sequence whose  Toeplitz subshift is an almost 1-1 extension of an odometer 
(in the general case these systems are almost 1-1 extensions of subodometers. See \cite{CP} for the details).

 By Lemma \ref{folner2} we can   assume there exists  
     a F\o lner sequence $(F_n)_{n\geq 0}$ of $G$ such that
\begin{itemize}
\item[(F1)] $\{e\}\subseteq F_n\subseteq F_{n+1}$ and  $F_{n}$ is a
fundamental domain of $G/\Gamma_{n}$, for every $n\geq 0$.

\item[(F2)] $G=\bigcup_{n\geq 0}F_{n}.$

\item[(F3)] $F_{n}=\bigcup_{v\in F_{n}\cap \Gamma_{i}} vF_i$, for every
$n>i\geq 0$.
\end{itemize}

We denote by $X$ the closure of the orbit of $x_0$. Thus $(X,\sigma|_X,G)$ is a Toeplitz subshift. 
\begin{definition}
We say that a finite clopen partition $\P$ of $X$ is a regular Kakutani-Rokhlin partition (r-K-R partition),
if there exists a finite index subgroup  $\Gamma$ of $G$  with a fundamental domain $F$ containing $e$ and a clopen $C_k$, such that
$$
\P=\{\sigma^{u^{-1}}(C_{k}): u\in F, 1\leq k\leq N\}$$
and 
$$
   \sigma^{\gamma}(\bigcup_{k=1}^NC_k)=\bigcup_{k=1}^NC_k \mbox{ for every } \gamma\in\Gamma.
$$
\end{definition}

To construct a regular Kakutani-Rokhlin partition of $X$, we need the following technical lemma.

\begin{lemma}\label{refine}
Let $\P'=\{\sigma^{u^{-1}}(D_{k}): u\in F, 1\leq k\leq N\}$ be a r-K-R partition of $X$ and $\Q$ any other finite clopen partition of $X$. Then there exists a r-K-R partition $\P=\{\sigma^{u^{-1}}(C_{k}): u\in F, 1\leq k\leq M\}$ of $X$ such that 
\begin{enumerate}
\item $\P$ is finer than $\P'$ and $\Q$,

\item $\bigcup_{k=1}^MC_k=\bigcup_{k=1}^ND_k$.
\end{enumerate}
\end{lemma}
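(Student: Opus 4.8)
The plan is to refine the tower $\P'$ \emph{inside} its common base $B := \bigcup_{k=1}^N D_k$: we cut $B$ into smaller clopen pieces chosen so as to refine $\{D_k\}_k$ and so that, once transported into every level of the tower, they also refine $\Q$. Then stacking these pieces back up produces the desired r-K-R partition $\P$.

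First I would unpack the hypothesis. Since $\P'=\{\sigma^{u^{-1}}(D_k):u\in F,\ 1\le k\le N\}$ is a partition of $X$ and $\{D_1,\dots,D_N\}$ is a clopen partition of $B$, the sets $\sigma^{u^{-1}}(B)$, $u\in F$, form a clopen partition of $X$; moreover each $\sigma^{u^{-1}}|_B$ is a homeomorphism of $B$ onto $\sigma^{u^{-1}}(B)$ with inverse $\sigma^{u}|_{\sigma^{u^{-1}}(B)}$, and $\sigma^{\gamma}(B)=B$ for every $\gamma\in\Gamma$ by the r-K-R property of $\P'$. Next, for each $u\in F$ I would form the finite clopen partition $\Q_u:=\{\,\sigma^{u}(Q)\cap B : Q\in\Q\,\}$ of $B$; this is the restriction of $\Q$ to the level $\sigma^{u^{-1}}(B)$, transported to $B$ via $\sigma^{u}$. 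Let $\{C_1,\dots,C_M\}$ be the common refinement, with empty atoms discarded, of the finite family $\{D_1,\dots,D_N\}\cup\bigcup_{u\in F}\Q_u$: it is a finite clopen partition of $B$ that refines $\{D_k\}_k$ and every $\Q_u$.

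Then I would set $\P:=\{\sigma^{u^{-1}}(C_j):u\in F,\ 1\le j\le M\}$ and check the three properties. It is a clopen partition of $X$: the $C_j$ partition $B$, so for fixed $u$ the sets $\sigma^{u^{-1}}(C_j)$ partition $\sigma^{u^{-1}}(B)$, and the latter partition $X$. It is a r-K-R partition with the same $\Gamma$ and $F$, since $\bigcup_{j=1}^M C_j = B = \bigcup_{k=1}^N D_k$, so $\sigma^{\gamma}(\bigcup_j C_j)=\bigcup_j C_j$ for $\gamma\in\Gamma$ is inherited from $\P'$; this also yields condition (2). For condition (1): $C_j\subseteq D_k$ for some $k$ gives $\sigma^{u^{-1}}(C_j)\subseteq\sigma^{u^{-1}}(D_k)$, so $\P$ is finer than $\P'$; and for each $u\in F$ there is $Q\in\Q$ with $C_j\subseteq\sigma^{u}(Q)\cap B$, whence $\sigma^{u^{-1}}(C_j)\subseteq Q\cap\sigma^{u^{-1}}(B)\subseteq Q$, so $\P$ is finer than $\Q$.

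The arguments are elementary. The only point that needs care is the bookkeeping: one must verify that the translated copies $\sigma^{u^{-1}}(C_j)$ of the new base reassemble into a genuine partition of $X$ — which uses precisely that $F$ is a fundamental domain of $G/\Gamma$ and that $B$ is $\Gamma$-invariant — and that refining $\Q$ level by level inside $B$ suffices to refine $\Q$ on all of $X$. I do not expect any real obstacle here; this is the standard ``refine a Kakutani--Rokhlin tower'' argument transcribed to the action of $G$.
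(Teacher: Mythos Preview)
Your proof is correct and follows essentially the same approach as the paper: refine the base $B=\bigcup_k D_k$ so that the pieces, transported to every level $\sigma^{u^{-1}}(B)$, refine both $\P'$ and $\Q$. The only difference is cosmetic: you take the common refinement of $\{D_k\}$ with all the pulled-back partitions $\Q_u$ in a single step, whereas the paper performs this refinement iteratively, handling one element $u\in F$ at a time; the resulting partition is the same.
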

\begin{proof}
Let $F=\{u_0, u_1,\cdots, u_{|F|-1}\},$ with $u_0=e$.

We refine every set $D_k$ with respect to the partition $\Q$. Thus we get a collection of disjoint sets
$$
D_{1,1},\cdots, D_{1,l_1};\cdots ; D_{N,1},\cdots , D_{N,l_N},
$$ 
such that each of these sets is in an atom of $\Q$ and $D_k=\bigcup_{j=1}^{l_k}D_{k,j}$ for every $1\leq k\leq N$.
Thus $\P_0=\{\sigma^{u^{-1}}(D_{k,j}): u\in F, 1\leq j\leq l_k,  1\leq k\leq N\}$ is a r-K-R partition of $X$.  For simplicity we write
$$
\P_0=\{\sigma^{u^{-1}}(D_k^{(0)}): u\in F, 1\leq k\leq N_0\}.
$$
We have that $\P_0$ verifies (2) and every $D_k^{(0)}$ is contained in  atoms of $\P'$ and 
$\Q$.

Let $  0\leq n <|F|-1$. Suppose that we have defined a r-K-R partition of $X$ 
$$
\P_n=\{\sigma^{u^{-1}}(D_k^{(n)}): u\in F, 1\leq k\leq N_n\},
$$
such that $\P_n$  verifies (2) and such that for every $0\leq j\leq n$ and $1\leq k\leq N_n$ there exist $A\in\P'$ and $B\in \Q$ such that
$$
\sigma^{u_j^{-1}}(D_k^{(n)})\subseteq A, B.
$$
Now we refine every set $\sigma^{u_{n+1}^{-1}}(D_k^{(n)})$ with respect to $\Q$. Thus we get a collection of disjoint sets 
$$
D_{1,1},\cdots, D_{1,s_1};\cdots ; D_{N_n,1},\cdots , D_{N_n,s_{N_n}}
$$
such that each of these sets is in an atom of $\Q$ and $\sigma^{u_{n+1}^{-1}}(D_k^{(n)})=\bigcup_{j=1}^{s_k}D_{k,j}$, for every $1\leq k\leq N_n$. 

For every $1\leq k\leq N_n$ and $1\leq j\leq s_k$, let $C_{k,j}=\sigma^{u_{n+1}}(D_{k,j})\subseteq D_k^{(n)}$. We have that
$$
\P_{n+1}=\{\sigma^{u^{-1}}(C_{k,j}): u\in F, 1\leq j\leq s_k, 1\leq k\leq N_n\}
$$
is a r-K-R partition of $X$  verifying (2) and such that for every $0\leq i\leq n+1$, $1\leq j\leq s_k$  and $1\leq k\leq N_n$ there exist $A\in\P'$ and $B\in \Q$ such that
$$
\sigma^{u_j^{-1}}(C_{k,j})\subseteq A, B.
$$
At the step $n=|F|-1$ we get   $\P=\P_{|F|-1}$  verifying (1) and (2). 
 \end{proof}

\begin{proposition}\label{general-KR}
There exists a sequence $(\P_n=\{\sigma^{u^{-1}}(C_{n,k}): u\in F_n, 1\leq k\leq k_n\})_{n\geq 0}$ of r-K-R partitions of $X$ 
such that for every $n\geq 0$,
\begin{enumerate}
\item $\P_{n+1}$ is finer than $\P_n$,


\item $C_{n+1}\subseteq C_{n}=\bigcup_{k=1}^{k_n}C_{n,k}$,

\item $\bigcap_{n\geq 1}C_n=\{x_0\}$,


\item The  sequence $(\P_n)_{n\geq 0}$  spans the topology of $X$.
\end{enumerate}
\end{proposition}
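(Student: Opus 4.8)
The construction is an induction on $n$, building the clopen sets $C_{n,k}$ together with their "base" $C_n=\bigcup_k C_{n,k}$, using Lemma~\ref{refine} to repeatedly refine. The key point is the choice of the very first partition $\P_0$: I will take the subgroup $\Gamma_0$ (the first term of the period structure), its fundamental domain $F_0$ (from Lemma~\ref{folner2}, containing $e$), and set $C_0=\{x\in X : x(e)\in Per(x_0,\Gamma_0)\text{-compatible pattern}\}$; more precisely, since $\Gamma_0$ is an essential group of periods for $x_0$, the sets $[x_0|_{F_0 \cap Per(x_0,\Gamma_0,a)}]$ indexed by the finitely many "$\Gamma_0$-periodic portions" of $x_0$ give finitely many disjoint clopen sets $C_{0,1},\dots,C_{0,k_0}$ whose base $C_0$ satisfies $\sigma^\gamma(C_0)=C_0$ for all $\gamma\in\Gamma_0$, and $\{\sigma^{u^{-1}}(C_{0,k}):u\in F_0,\ 1\le k\le k_0\}$ is a clopen partition of $X$. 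This uses (F1): $F_0$ is a fundamental domain of $G/\Gamma_0$, so the translates $\sigma^{u^{-1}}(C_0)$, $u\in F_0$, tile $X$. That $\P_0$ is indeed a partition (pairwise disjoint) follows from $x_0$ being a Toeplitz sequence with period structure: distinct $u\in F_0$ give distinct translates because $\Gamma_0$ is essential, and the $C_{0,k}$ separate the finitely many $\Gamma_0$-orbit-types appearing in $x_0$.

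Next, for the inductive step I will first enlarge the index: passing from $\Gamma_n$ to $\Gamma_{n+1}$ and from $F_n$ to $F_{n+1}$, property (F3) says $F_{n+1}=\bigcup_{v\in F_{n+1}\cap\Gamma_n} vF_n$, so $F_{n+1}$ tiles by translates of $F_n$; correspondingly I build a candidate r-K-R partition $\P'$ with base $C_n$ and tower $F_{n+1}$ by declaring, for each $v\in F_{n+1}\cap\Gamma_n$, the sets $\sigma^{v^{-1}}(C_{n,k})$ restricted to the clopen set on which the $\Gamma_{n+1}$-periodic part of $x_0$ prescribes the relevant letters — again using that $\Gamma_{n+1}$ is an essential group of periods, so these pieces are nonempty clopen and separate orbit-types. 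Then I apply Lemma~\ref{refine} with $\P'$ this new partition and $\Q$ equal to the cylinder partition $\{[a] : a\in\Sigma^{B_n}\}$ for an exhausting sequence of finite sets $B_n\uparrow G$ (possible by (F2)): this yields a finer r-K-R partition $\P_{n+1}=\{\sigma^{u^{-1}}(C_{n+1,k}):u\in F_{n+1},\ 1\le k\le k_{n+1}\}$ with the same base as $\P'$, hence with $C_{n+1}=\bigcup_k C_{n+1,k}\subseteq C_n$ — this gives (1) and (2) — and with every atom contained in a $B_n$-cylinder, which forces (4), that the $\P_n$ span the topology, since $B_n\uparrow G$.

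Finally, for (3): by construction $x_0\in C_n$ for every $n$ (the base always contains the point whose $\Gamma_n$-periodic part we used to define it), so $x_0\in\bigcap_n C_n$. Conversely, if $y\in\bigcap_n C_n$ then $y$ agrees with $x_0$ on $Per(x_0,\Gamma_n)$ for every $n$, and since $G=\bigcup_n Per(x_0,\Gamma_n)$ (the defining property of a period structure, cf. (F2) combined with the regular-recurrence of $x_0$), we get $y=x_0$; so $\bigcap_n C_n=\{x_0\}$. The main obstacle is verifying that the candidate partitions $\P'$ really are r-K-R partitions — i.e. that the pieces are genuinely disjoint and their $F_{n+1}$-translates cover $X$ — which is exactly where non-periodicity of $x_0$ and the "essential group of periods" hypothesis are used: essentiality guarantees that $u\mapsto\sigma^{u^{-1}}(C_n)$ is injective on the fundamental domain, so the count matches and disjointness upgrades to a partition. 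Once that is in place, Lemma~\ref{refine} does the rest routinely.
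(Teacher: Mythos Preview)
Your plan follows essentially the same route as the paper, but the paper's execution is cleaner in two respects. First, rather than building the bases inductively, the paper defines them uniformly for all $n$ at once by
\[
C_n=\{x\in X: Per(x,\Gamma_n,a)=Per(x_0,\Gamma_n,a)\ \forall a\in\Sigma\},
\]
and cites \cite[Proposition~6]{CP} to conclude that $C_n=\overline{\{\sigma^{\gamma}(x_0):\gamma\in\Gamma_n\}}$ is clopen and that $\P_n':=\{\sigma^{u^{-1}}(C_n):u\in F_n\}$ is already an r-K-R partition with $\sigma^{\gamma}(C_n)=C_n$ for all $\gamma\in\Gamma_n$. This single citation dispatches exactly what you flag as ``the main obstacle'' (injectivity of $u\mapsto\sigma^{u^{-1}}(C_n)$ on $F_n$), and it gives (1)--(3) for the coarse sequence $(\P_n')_n$ immediately. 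Your description of the initial pieces $C_{0,1},\dots,C_{0,k_0}$ as ``indexed by the finitely many $\Gamma_0$-periodic portions of $x_0$'' is muddled: there is only \emph{one} $\Gamma_0$-periodic portion of $x_0$, so this yields a single base $C_0$; the multiple $C_{n,k}$'s only appear after refinement by Lemma~\ref{refine}. Second, to obtain both nestedness and topology-spanning while keeping the base equal to $C_n$, the paper applies Lemma~\ref{refine} \emph{twice} at each level $n\ge 1$: once to produce a partition $\P_n''$ finer than both $\P_{n-1}$ and the cylinder partition $\Q_n=\{[B]\cap X:B\in\Sigma^{F_n}\}$, and then a second time against $\P_n'$ so that $\bigcup_k C_{n,k}=C_n$. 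Your single application of Lemma~\ref{refine} to a hand-built $\P'$ can be made to work, but you have not actually verified that your $\P'$ is an r-K-R partition with base contained in $C_n$ (you say ``base $C_n$'' and then restrict, which is inconsistent). Your argument for (3), via $G=\bigcup_n Per(x_0,\Gamma_n)$, is correct and matches the paper's.
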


\begin{proof}
For every $n\geq 0$, let define
$$
C_n=\{x\in X: Per(x,\Gamma_n,a)=Per(x_0,\Gamma_n,a) \forall a\in\Sigma\}.
$$
From \cite[Proposition 6]{CP} we get
$$
C_n=\overline{\{\sigma^{\gamma}(x_0):\gamma\in\Gamma_n\}},
$$
and that $\P_n'=\{\sigma^{u^{-1}}(C_n): u\in F_n\}$ is a clopen partition of $X$ such that $\sigma^{\gamma}(C_n)=C_n$ for every $\gamma\in\Gamma_n$.  Thus $\P_n'$ is a r-K-R partition of $X$.  Furthermore, the sequence $(\P_n')_{n\geq 0}$ verifies (1), (2) and (3).

For every $n\geq 0$, let $\Q_n=\{[B]\cap X: B\in\Sigma^{F_n}, [B]\cap X\neq \emptyset\}$. This is a finite clopen partition of $X$ and $(\Q_n)_{n\geq 0}$ spans the topology of $X$. 

We define $\P_{0} =\{\sigma^{u^{-1}}(C_{0,k}): u\in F_0, 1\leq k\leq k_0\})$ the r-K-R partition finer than $\P_0'$ and $\Q_0$ given by Lemma \ref{refine}.  Now we take $\P_n''$ the r-K-R partition finer that $\P_{n-1}$ and $\Q_n$ given by Lemma \ref{refine}, and we define
$$
\P_n=\{\sigma^{u^{-1}}(C_{n,k}): u\in F_n, 1\leq k\leq k_n\},
$$
the r-K-R partition finer than $\P'=\P'_n$ and $\Q=\P_n''$ given by Lemma \ref{refine}. Thus $\P_n$ is finer than $\P_{n-1}$ and $\Q_n$. This implies that the sequence $(\P_n)_{n\geq 0}$ verifies (1) and (4).  Since $\bigcup_{k=1}^{k_n}C_{n,k}=C_n$, we deduce that $(\P_n)_{n\geq 0}$ verifies (2) and (3).
\end{proof}

\begin{remark}
{\rm  The sequence of partitions of Proposition \ref{general-KR} is a generalization to  Toeplitz $G$-subshifts    of  the sequences of  Kakutani-Rokhlin partitions for   Toeplitz $\ZZ$-subshifts  introduced in \cite{GJ}.  See \cite{HPS} for more details about Kakutani-Rokhlin partitions for minimal $\ZZ$-actions on the Cantor set}
\end{remark}

\begin{definition} We say that a sequence $(\P_n)_{n\geq 0}$ of r-K-R partitions as in Proposition \ref{general-KR} is a nested sequence of  r-K-R partitions of $X$.
\end{definition}
Let  $(\P_n=\{\sigma^{u^{-1}}(C_{n,k}): u\in F_n, 1\leq k\leq k_n\})_{n\geq 0}$ be a sequence of nested r-K-R partitions of $X$.

For every $n\geq 0$ we define the matrix  $M_n\in
\M_{k_n\times k_{n+1}}(\ZZ^+)$ as
$$
M_n(i,k)=|\{\gamma\in F_{n+1}\cap\Gamma_n:  \sigma^{\gamma^{-1}}(C_{n+1,k})\subseteq C_{n,i}\}|,
$$
We call $M_n$ the {\it incidence matrix} of the partitions $\P_{n+1}$ and $\P_n$.

Let $p$ be a positive integer. For every $n\geq 1$ we denote by
$\triangle(n,p)$ the closed convex hull generated by the vectors
$\frac{1}{p}e_1^{(n)},\cdots,\frac{1}{p}e_n^{(n)}$, where
$e_1^{(n)},\cdots,e_n^{(n)}$ is the canonical base in  $\RR^n$.
Thus $\triangle(n,1)$ is the unitary simplex in $\RR^n$.

Observe that for every $n\geq 0$ and $1\leq k\leq k_{n+1}$,
$$\sum_{i=1}^{k_n}M_n(i,k)=\frac{|F_{n+1}|}{|F_n|}.$$
This implies that $M_n(\triangle(k_{n+1},|F_{n+1}|))\subseteq \triangle(k_n, |F_n|)$.

The next result characterizes the  maximal equicontinuous factor, the space of invariant probability measures  and the associated ordered group of $(X,\sigma|_X,G)$ in terms of the sequence of incidence matrices of a nested sequence of  r-K-R partitions.

\begin{proposition} \label{measures and odometer}
 Let  $(\P_n=\{\sigma^{u^{-1}}(C_{n,k}): u\in F_n, 1\leq k\leq k_n\})_{n\geq 0}$ be a nested sequence of r-K-R partitions of $X$ with an associated sequence of incidence matrices $(M_n)_{n\geq 0}$. Then 
\begin{enumerate}
\item   $(X,\sigma|_X,G)$ is an almost 1-1
extension of the odometer $O=\varprojlim_n(G/\Gamma_n,\pi_n)$,

\item there is an affine homeomorphism between the set of
invariant probability measures of $(X,\sigma|_{X},G)$ and the
inverse limit  $\varprojlim_n(\triangle(k_n, |F_n|),M_n)$,

\item the ordered group $\G(X,\sigma|_X,G)$ is isomorphic to
$(H/inf(H),(H/inf(H))^+,u + inf(H))$, where $(H,H^+)$ is given by
$$
\xymatrix{{\ZZ^{}}\ar@{->}[r]^{M^T} &{\ZZ^{k_0}}\ar@{->}[r]^{M^T_0} & {\ZZ^{k_1}}\ar@{->}[r]^{M_1^T} &
{\ZZ^{k_2}}\ar@{->}[r]^{M^T_2}& {\cdots}},
$$
where $M=|F_0|(1,\cdots,1)$ and $u=[M^T,0]$.
\end{enumerate}
\end{proposition}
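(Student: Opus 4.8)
The plan is to read off each of the three invariants from the sequence of incidence matrices of $(\P_n)_n$, exactly as in the $\ZZ$-case treatment via Kakutani--Rokhlin partitions.

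\textbf{(1)} First I would produce the factor map $\phi\colon X\to O$ by recording the ``address'' of a point in each tower: since $\{\sigma^{u^{-1}}(C_n):u\in F_n\}$ is a clopen partition of $X$, there is for every $x$ and every $n$ a unique $u_n(x)\in F_n$ with $x\in\sigma^{u_n(x)^{-1}}(C_n)$, and I set $\phi(x)=([u_n(x)]_n)_n$. The inclusions $C_{n+1}\subseteq C_n$, the identity $\sigma^{\gamma}(C_n)=C_n$ for $\gamma\in\Gamma_n$, and the fact that $F_n$ is a fundamental domain of the normal subgroup $\Gamma_n$ together force $u_{n+1}(x)$ and $u_n(x)$ to lie in the same $\Gamma_n$-coset, so $\phi(x)\in O$. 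Each coordinate $x\mapsto[u_n(x)]_n$ is locally constant, hence $\phi$ is continuous; a short computation with cosets (using normality of the $\Gamma_n$) shows $\phi$ is equivariant; its image is a closed $G$-invariant subset of the minimal system $O$, hence all of $O$. Finally $x_0\in C_n$ for every $n$, so $u_n(x_0)=e$ and $\phi^{-1}(\phi(x_0))=\bigcap_nC_n=\{x_0\}$ by Proposition \ref{general-KR}(3); the set of points of $O$ with singleton $\phi$-fibre is thus a nonempty, $G$-invariant $G_\delta$, hence dense in $O$, which is exactly the statement that $(X,\sigma|_X,G)$ is an almost 1-1 extension of $O$.

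\textbf{(2)} Next I would define $\Phi\colon\M(X,\sigma|_X,G)\to\varprojlim_n(\triangle(k_n,|F_n|),M_n)$ by $\Phi(\mu)=((\mu(C_{n,k}))_{1\le k\le k_n})_n$. Invariance of $\mu$ and the tower partition give $\sum_k\mu(C_{n,k})=1/|F_n|$, and decomposing each $C_{n,i}$ into the atoms $\sigma^{v^{-1}}(C_{n+1,k})$, $v\in F_{n+1}\cap\Gamma_n$, that it contains gives $(\mu(C_{n,i}))_i=M_n(\mu(C_{n+1,k}))_k$; so $\Phi$ is well defined, and it is plainly affine and weak-$*$ continuous. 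It is injective because the algebras generated by the $\P_n$ span the topology. For surjectivity, from a coherent sequence $(v_n)_n$ I would build the finitely additive function $\sigma^{u^{-1}}(C_{n,k})\mapsto v_{n,k}$ on the clopen algebra spanned by the $\P_n$ (well defined by $v_n=M_nv_{n+1}$), which extends to a Borel probability measure $\mu$ by compactness. The delicate point---and the one I expect to be the main obstacle---is that $\mu$ is $\sigma$-invariant. To see it I would check $\int f\circ\sigma^g\,d\mu=\int f\,d\mu$ for $f\in C(X)$ by approximating $f$ by a function constant on the atoms of $\P_m$; writing an atom as $\sigma^{u^{-1}}(C_{m,k})$, its image under $\sigma^g$ is $\sigma^{gu^{-1}}(C_{m,k})$, which is again a $\P_m$-atom with label $k$ when $ug^{-1}\in F_m$, and in general is contained in the block $\sigma^{w^{-1}}(C_m)$ of $\mu$-mass $1/|F_m|$ ($w$ the $F_m$-representative of the $\Gamma_m$-coset of $ug^{-1}$, using $\sigma^{\gamma}(C_m)=C_m$). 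Since $(F_m)_m$ is F\o lner, the proportion of $u\in F_m$ with $ug^{-1}\notin F_m$ tends to $0$, so the exceptional atoms and their $\sigma^g$-images both carry total $\mu$-mass tending to $0$; letting $m\to\infty$ yields the invariance. A continuous affine bijection between compact convex sets being an affine homeomorphism, this proves (2).

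\textbf{(3)} For the ordered group I would use the ``tower totals''. For each $n$ let $C(X,\ZZ)_{\P_n}$ be the integer functions constant on the atoms of $\P_n$; to $f=\sum_{u\in F_n,\,k}c_{u,k}\mathbf 1_{\sigma^{u^{-1}}(C_{n,k})}$ I associate $\theta_n(f)=\big(\sum_{u\in F_n}c_{u,k}\big)_{1\le k\le k_n}\in\ZZ^{k_n}$. The same refinement computation as in (2) shows $\theta$ intertwines the inclusion $C(X,\ZZ)_{\P_n}\subseteq C(X,\ZZ)_{\P_{n+1}}$ with $M_n^T$, so the $\theta_n$ assemble into a surjective homomorphism $\Theta\colon C(X,\ZZ)\to H$, where $H$ is the direct limit of the statement (prepending $\ZZ\xrightarrow{M^T}\ZZ^{k_0}$, with $M=|F_0|(1,\dots,1)$, does not change the limit group). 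Since $\int f\,d\mu=\langle\theta_n(f),(\mu(C_{n,k}))_k\rangle$ for $f\in C(X,\ZZ)_{\P_n}$, part (2) together with the description of the positive homomorphisms $H\to\RR$ (as in Lemma \ref{nexo}: they are the $a\mapsto\langle a,z_n\rangle$ for $z\in\varprojlim((\RR^+)^{k_n},M_n)$, the normalization at the order unit forcing $z_n\in\triangle(k_n,|F_n|)$, hence $z=\Phi(\mu)$ for a unique $\mu$) identifies $\{f\in C(X,\ZZ):\int f\,d\mu=0\ \forall\mu\}$ with $\Theta^{-1}(\inf(H))$. Thus $\Theta$ descends to a group isomorphism $D_m(X,\sigma|_X,G)\xrightarrow{\ \sim\ }H/\inf(H)$. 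It remains to check this respects the order and the unit: if $f\ge0$ then $\theta_n(f)\ge0$, and conversely every element of $H^+$ is $\theta_m$ of some $\sum_kv_k\mathbf 1_{C_{m,k}}\ge0$; and $\theta_0(\mathbf 1_X)=|F_0|(1,\dots,1)^T=M^T$ is the image of the generator of the initial $\ZZ$, so $[\mathbf 1_X]$ maps to the order unit $u=[M^T,0]$. This gives the desired isomorphism of unital ordered groups $\G(X,\sigma|_X,G)\simeq(H/\inf(H),(H/\inf(H))^+,u+\inf(H))$.
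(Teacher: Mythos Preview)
Your proof is correct and follows essentially the same approach as the paper: the address map $x\mapsto(u_n(x)\Gamma_n)_n$ for the odometer factor, the map $\mu\mapsto((\mu(C_{n,k}))_k)_n$ together with the F\o lner argument for the measures, and the identification via Lemma~\ref{nexo} for the ordered group. The only cosmetic difference is in part~(3): the paper builds the isomorphism in the direction $H\to D_m(X,\sigma|_X,G)$ by $[v,n]\mapsto\sum_k v_k[\mathbf 1_{C_{n,k}}]$ and shows its kernel is $\inf(H)$, whereas you go the other way via the tower-totals map $\Theta\colon C(X,\ZZ)\to H$; these are inverse constructions and the content is the same.
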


\begin{proof}
{\bf 1.}  For every $x\in X$ and $n\geq 0$, let $v_n(x)\in F_n$ be such that $x\in \sigma^{v_n(x)^{-1}}(C_n)$.

The map $\pi: X\to O$ given by
$\pi(x)=(v_n(x)^{-1}\Gamma_n)_{n\geq 1}$ is well defined, is a factor map
and verifies $\pi^{-1}(\pi(x_0))=\{x_0\}$. This shows that
$(X,\sigma|_X,G)$ is an almost 1-1 extension of $O$.

\medskip

{\bf 2.} It is clear that for any invariant probability measure $\mu$ of
$(X,\sigma|_{X},G)$, the sequence $(\mu_n)_{n\geq 0}$, with
$\mu_n=(\mu(C_{n,k}): 1\leq k\leq k_n)$, is an element of the
inverse limit $\varprojlim_n(\triangle(k_n, |F_n|),M_n)$. Conversely, any element
$(\mu_{n,k}: 1\leq k\leq k_n)_{m\geq 0}$ of such inverse limit,
defines a probability measure $\mu$ on the $\sigma$-algebra
generated by $(\P_n)_{n\geq 0}$, which is equal to the Borel $\sigma$-algebra of $X$ because $(\P_n)_{n\geq 0}$ spans the topology
of $X$ and is countable. Since the sequence $(F_n)$ is F\o lner,  it is standard to check that the measure $\mu$ is invariant
by  the $G$-action. 

The function $\mu\mapsto (\mu_n)_{n\geq 0}$  is thus
an affine bijection between $\M(X,\sigma|_X,G)$ and the inverse
limit $\varprojlim_n(\triangle(k_n, |F_n|),M_n)$. Observe that this function  is a homeomorphism  with respect to the weak topology in
$\M(X,\sigma|_X,G)$ and the product topology in the inverse limit.

\medskip

{\bf 3.}  We denote by $[k,-1]$ the class of the element $(k,-1)\in \ZZ\times\{-1\}$ in $H$.

Let $\phi:H \to D_m(X,\sigma|_{X},G)$ be the function
given by $\phi([v,n])=\sum_{k=1}^{k_n}v_i[1_{C_{n,k}}]$, for every
$v=(v_1,\cdots, v_{k_n})\in \ZZ^{k_n}$ and $n\geq 0$, and $\phi([k,-1])=k1_X$ for every $k\in\ZZ$. It is easy
to check that $\phi$ is a well defined homomorphism of groups that
verifies  $\phi(H^+)\subseteq D_m(X,\sigma|_X,G)^+$. Since
$(\P_n)_{n\geq 0}$ spans the topology of $X$, every function $f\in
C(X,\ZZ)$ is constant on every atom of $\P_n$, for some $n\geq 0$.
This implies that $\phi$ is surjective. Lemma \ref{nexo} and (2) of Proposition \ref{measures and odometer},
imply that $Ker(\phi)=inf(H)$.  Finally,  $\phi$ induces a
isomorphism $\widehat{\phi}:H/inf(H)\to D_m(X,\sigma|_X,G)$  such
that $\widehat{\phi}((H/inf(H))^+)=D_m(X,\sigma|_X,G)^+$.  Since $[1,-1]=[M^T,0]$, we get $\phi([M^T,0])=[1_X]$.
\end{proof}

\section{Kakutani-Rokhlin partitions with  prescribed incidence matrices.}\label{realization-ordered-groups}

We say that a sequence of positive integer matrices  $(M_n)_{n\geq 0}$ is {\it managed} by the
increasing sequence of positive integers $(p_n)_{n\geq 0}$, if for
every $n\geq 0$ the integer $p_n$ divides $p_{n+1}$, and if the
matrix $M_n$ verifies the following properties: 
\begin{enumerate}
\item $M_n$ has $k_n\geq 2$ rows and $k_{n+1}\geq 2$ columns;

\item $\sum_{i=1}^{k_n}M_n(i,k)=\frac{p_{n+1}}{p_n}$, for every
$1\leq k\leq k_{n+1}$.

\end{enumerate}



If $(M_{n})_{n\geq 0}$ is a   sequence of
matrices  managed by $(p_n)_{n\geq 0}$, then for each $n\geq 0$,\\ 
$M_n(\triangle(k_{n+1},p_{n+1}))\subseteq \triangle(k_n,p_n)$.

Observe that the sequences of incidence matrices  associated to the nested sequences of  r-K-R partitions defined in Section \ref{K-R partitions}  are   managed by $(|F_n|)_{n\geq 0}$.

In this Section we construct Toeplitz subshifts with nested sequences of r-K-R partitions whose sequences of incidence matrices are managed.

\subsection{Construction of the partitions}
In the rest of this Section  $G$ is an amenable and residually finite group. 
Let $(\Gamma_n)_{n\geq 0}$  be a decreasing sequence of finite index normal subgroup of $G$ such that $\bigcap_{n\geq 0}\Gamma_n=\{e\}$, and let $(F_n)_{n\geq 0}$ be a F\o lner sequence of $G$ such that
\begin{itemize}
\item[(F1)] $\{e\}\subseteq F_n\subseteq F_{n+1}$ and  $F_{n}$ is a
fundamental domain of $G/\Gamma_{n}$, for every $n\geq 0$.

\item[(F2)] $G=\bigcup_{n\geq 0}F_{n}.$

\item[(F3)] $F_{n}=\bigcup_{v\in F_{n}\cap \Gamma_{i}} vF_i$, for every
$n>i\geq 0$.
\end{itemize}
Lemma \ref{folner2} ensures the existence of a F\o lner sequence verifying conditions (F1), (F2) and (F3).




For every $n\geq 0$, we call $R_n$ the set $F_n\cdot F_n^{-1}\cup F_n^{-1}\cdot F_n$. This will enable us to define a ``border'' of each domain $F_{n+1}$.  

Let $\Sigma$ be a finite alphabet.  For every $n\geq 0$, let
$k_n\geq 3$ be an integer. We say that the sequence of sets
$(\{B_{n,1},\cdots,B_{n,k_n}\})_{n\geq 0}$ where for any $n \ge 0$, 
 $\{B_{n,1},\cdots,B_{n,k_n}\}\subseteq \Sigma^{F_n}$ is a collection of different functions, 
{\em verifies conditions} (C1)-(C4) if it verifies the following four conditions for any $n \ge 0$:
\begin{itemize}
\item[(C1)] $\sigma^{\gamma^{-1}}(B_{n+1,k})|_{F_{n}}\in \{B_{n,i}:1\leq i\leq
k_{n}\}$, for every $\gamma\in F_{n+1}\cap \Gamma_{n}$, $1\leq k\leq
k_{n+1}$.

\item[(C2)] $B_{n+1,k}|_{F_{n}}=B_{n,1}$, for every $1\leq k\leq
k_{n+1}$.

\item[(C3)]  For  any $g\in F_n$ such that for
some $1\leq k, k'\leq k_n$, $B_{n,k}(gv)=B_{n,k'}(v)$ for all
$v\in F_n \cap g^{-1}F_n $, then $g=e$.

\item[(C4)] $\sigma^{\gamma^{-1}}(B_{n+1,k})|_{F_{n}}=B_{n,k_{n}}$
for every $\gamma\in (F_{n+1}\cap \Gamma_{n}) \cap \left[F_{n+1}\setminus
F_{n+1}g^{-1} \right]$, for some $g\in R_{n}$.
\end{itemize}

\begin{example}{\rm 
To illustrate these conditions, let us consider the case $G =\ZZ$, $\Sigma= \{ 1,2,3,4\}$ and $\Gamma_n=3^{2(n+1)}\ZZ$ 
for every $n\geq 0$.  The set
$$
 F_n=\left\{ -\left(\frac{3^{2(n+1)}-1}{2}  \right ), -\left(\frac{3^{2(n+1)}-1}{2}  \right ) +1 , \cdots, \left(\frac{3^{2(n+1)}-1}{2} \right )  \right\}
$$
is a  fundamental domain of $\ZZ/\Gamma_n$.  Furthermore we have
 $$
 F_n=\bigcup_{v\in\{k3^{2n}: -4\leq k\leq 4\}}(F_{n-1}+v),
 $$
for every $n\geq 1$. This shows that sequence $(F_n)_{n\geq 0}$ satisfies (F1), (F2) and (F3).
 
Now let us consider the case where $k_n=4$ for every $n\geq 0$. We define $B_{0,k}(j)=k$ for every $j\in F_0$ and $1\leq k\leq 4$, and for $n\geq 1$,
 $$
 B_{n,k}|_{F_{n-1}}=B_{n-1,1}, \hspace{2mm}  B_{n,k}|_{F_{n-1}+v}=B_{n-1,4} \mbox{ for } v\in \{-l\cdot 3^{2n}, l\cdot 3^{2n}: l=3,4\}.
 $$ 
Thus they verify the  conditions (C1) and (C4). We fill the rest of the $B_{n,k}|_{F_{n-1}+v}$ with $B_{n-1,3}$ and $B_{n-1,2}$ in order that 
 $B_{n,1},\cdots, B_{n,4}$ are different.   
They satisfy conditions (C2) and (C4). The limit in $\Sigma^\ZZ$ of the functions $B_{n,1}$ is a $\ZZ$-Toeplitz sequence $x$. 
If $X$ denotes the closure of the orbit of $x$, then we prove in the next lemma (in a more general setting) that  
$$
(\P_n=\{\sigma^{j}([B_{n,k}]\cap X): j\in F_n, 1\leq k\leq 4\})_{n\geq 0}
$$
is a sequence of nested Kakutani-Rokhlin partitions of the subshift $X$.}
\end{example}

In the next lemma, we show that conditions (C1) and (C2) are
sufficient to construct a Toeplitz sequence. The technical
conditions (C3) (aperiodicity) and (C4) (also known as ``forcing
the border'') will allow to construct a  nested sequence of    r-K-R
partitions of $X$. 

\begin{lemma}\label{sufficient-condition}
 Let $(\{B_{n,1},\cdots,B_{n,k_n}\})_{n\geq 0}$ be a sequence that
 verifies conditions (C1)-(C4). Then:

\begin{itemize}
\item[(1)] The set $\bigcap_{n\geq 0}[B_{n,1}]$ contains only one
element $x_0$  which  is a Toeplitz sequence.

\item[(2)] Let $X$ be the  orbit closure of $x_0$ with respect to the
shift action. For every $n\geq 0$, let
$$
\P_n=\{\sigma^{u^{-1}}([B_{n,k}]\cap X): 1\leq k\leq k_n, u\in
F_n\}.
$$
Then $(\P_n)_{n\geq 0}$ is a sequence of nested r-K-R partitions of $X$. 
\end{itemize}

Let $(M_n)_{n\geq 0}$ be the sequence of incidence matrices of $(\P_n)_{n\geq 0}$.
Thus we have 

\begin{itemize}

\item[(3)] The Toeplitz subshift $(X,\sigma|_X,G)$ is an almost 1-1
extension of the odometer $O=\varprojlim_n(G/\Gamma_n,\pi_n)$.

\item[(4)] There is an affine homeomorphism between the set of
invariant probability measures of $(X,\sigma|_{X},G)$ and the
inverse limit  $\varprojlim_n(\triangle(k_n, |F_n|),M_n)$.
\item[(5)] The ordered group $\G(X,\sigma|_X,G)$ is isomorphic to
$(H/inf(H),(H/inf(H))^+,u + inf(H))$, where $(H,H^+)$ is given by
$$
\xymatrix{{\ZZ^{}}\ar@{->}[r]^{M^T} &{\ZZ^{k_0}}\ar@{->}[r]^{M^T_0} & {\ZZ^{k_1}}\ar@{->}[r]^{M_1^T} &
{\ZZ^{k_2}}\ar@{->}[r]^{M^T_2}& {\cdots}},
$$
with $M=|F_0|(1,\cdots,1)$ and $u=[M^T,0]$.
\end{itemize}
\end{lemma}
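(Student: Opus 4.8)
The proof proceeds by verifying items (1)–(5) in order, with the bulk of the work going into (1) and (2); items (3), (4) and (5) then follow immediately by invoking Proposition~\ref{general-KR} and Proposition~\ref{measures and odometer} once (2) is established.

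For (1), I would first observe that condition (C2) says $B_{n+1,k}|_{F_n} = B_{n,1}$, so in particular the functions $B_{n,1} \in \Sigma^{F_n}$ are coherent: each extends the previous one since $F_n \subseteq F_{n+1}$ and $\bigcup_n F_n = G$ by (F2). Hence $\bigcap_{n\geq 0}[B_{n,1}]$ is a single point $x_0 \in \Sigma^G$ with $x_0|_{F_n} = B_{n,1}$ for all $n$. To see $x_0$ is a Toeplitz sequence, I would use the characterization recalled in Section~4: it suffices to show $G = \bigcup_{n} Per(x_0,\Gamma_n)$. Fix $g \in G$; choose $n$ with $g \in F_n$. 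For any $\gamma \in \Gamma_n$, write $\gamma g = v \delta$ with $v \in F_n$, $\delta \in \Gamma_n$ (using that $F_n$ is a fundamental domain), and then iterate condition (C1) going up the tower: $\sigma^{\gamma^{-1}}(x_0)(g)$ only depends, through (C1), on which block $B_{n,i}$ appears at the relevant translate, and (C2) pins down that the values on $F_n \subseteq F_{n+1}$ are frozen. The cleanest route is to show directly that $x_0(\gamma^{-1} g) = x_0(g)$ for all $\gamma$ in some finite-index subgroup contained in $\Gamma_n$, exploiting that by (C1) the restriction $\sigma^{\gamma^{-1}}(B_{m,1})|_{F_n}$ lies in $\{B_{n,i}\}$ for $\gamma \in F_m \cap \Gamma_n$ and by (C2) the block over $F_n$ itself is always $B_{n,1}$.

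For (2), the task is to show $\P_n$ is a well-defined clopen partition of $X$, that $\sigma^\gamma$ permutes the base $\bigcup_k ([B_{n,k}] \cap X)$ for $\gamma \in \Gamma_n$, and that the sequence is nested in the sense of Proposition~\ref{general-KR}. That the sets $\sigma^{u^{-1}}([B_{n,k}] \cap X)$ for $u \in F_n$, $1 \le k \le k_n$ cover $X$ follows from (C1): every point of $X$ is a shift-limit of shifts of $x_0$, and (C1) guarantees that over each $F_n$-window the pattern seen is a translate of some $B_{n,k}$; that they are pairwise disjoint on the base level is the role of (C3), which forbids a block $B_{n,k}$ from matching a nontrivially-translated $B_{n,k'}$, so the $u \in F_n$ indexing is unambiguous. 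The nesting $C_{n+1} \subseteq C_n$ (with $C_n = \bigcup_k [B_{n,k}] \cap X$) and the refinement of partitions come from (C1) together with (F3), which decomposes $F_{n+1}$ into $\Gamma_n$-translates of $F_n$. The point I expect to be the genuine obstacle is showing that $\P_{n+1}$ refines $\P_n$ as partitions of $X$ — i.e. that "forcing the border" works: this is exactly what condition (C4) is for. One must show that if $x \in X$ agrees with $B_{n+1,k}$ on $F_{n+1}$, then the induced position $v_n(x) \in F_n$ of $x$ relative to the level-$n$ base is determined, and does not "wobble" across the boundary of $F_{n+1}$; (C4) forces the block $B_{n,k_n}$ (a distinguished "border" symbol pattern) to appear on all $F_n$-translates that stick out past $F_{n+1} g^{-1}$ for $g \in R_n = F_n F_n^{-1} \cup F_n^{-1} F_n$, which is precisely the set of relative displacements that could cause ambiguity. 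Carefully tracking how $R_n$ controls the possible overhangs is the combinatorial heart of the argument.

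Once (2) is in hand, items (3), (4), (5) are immediate: $(\P_n)_{n \ge 0}$ is a nested sequence of r-K-R partitions of $X$ in the sense of the definition following Proposition~\ref{general-KR} (property (3) of that proposition, $\bigcap_n C_n = \{x_0\}$, holds because $\bigcap_n [B_{n,1}] = \{x_0\}$ from item (1), and property (4), spanning the topology, follows since the $F_n$ exhaust $G$), and then (3), (4), (5) are literally the three conclusions of Proposition~\ref{measures and odometer} applied to this sequence and its incidence matrices $(M_n)_{n \ge 0}$, with $M = |F_0|(1,\dots,1)$ and $u = [M^T,0]$ as stated there. I would close by remarking that $M_n$ has $k_n \ge 3 \ge 2$ rows and $k_{n+1} \ge 2$ columns and column sums $|F_{n+1}|/|F_n|$, so $(M_n)_n$ is managed by $(|F_n|)_n$, which is the link to the next section.
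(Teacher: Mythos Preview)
Your overall architecture is right, and parts (1), (3), (4), (5) are handled essentially as in the paper. But you have misplaced the role of condition (C4), and this creates a real gap in your argument for (2).

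Refinement of $\P_{n+1}$ over $\P_n$ is \emph{not} the obstacle and does not need (C4): it follows directly from (C1) and (F3), as you yourself say two sentences earlier. Given $x\in\sigma^{u^{-1}}([B_{n+1,k}]\cap X)$ with $u\in F_{n+1}$, write $u=\gamma v$ with $\gamma\in F_{n+1}\cap\Gamma_n$ and $v\in F_n$ (by (F3)); then (C1) gives $\sigma^{\gamma^{-1}}(B_{n+1,k})|_{F_n}=B_{n,i}$ for some $i$, so $x\in\sigma^{v^{-1}}([B_{n,i}]\cap X)$. Done. Likewise, the well-definedness of the position $v_n(x)$ (disjointness of the atoms) is handled entirely by (C3), which you correctly identify.

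The place where (C4) is genuinely needed is the property you dismiss as trivial: that $(\P_n)_{n\ge 0}$ \emph{spans the topology} of $X$. Your claim that this ``follows since the $F_n$ exhaust $G$'' is false. The issue is that the atom of $\P_n$ containing $x$ only records $x|_{v_n(x)^{-1}F_n}$, and the sets $v_n(x)^{-1}F_n$ need not exhaust $G$ (think of $x$ sitting always near the ``right edge'' of the window). So two distinct points $x,y\in X$ could in principle share the same $(v_n,k_n)$-sequence for all $n$ and hence lie in the same atom of every $\P_n$. The paper's argument takes such $x,y$ and a coordinate $g$ with $x(g)\neq y(g)$, chooses $n$ large with $g\in F_{n-1}$, and then uses (C4) to show that whenever $g\notin v_n^{-1}F_n$, the value $x(g)$ (and likewise $y(g)$) is forced to equal $B_{n-1,k_{n-1}}(u')$ for a specific $u'$ depending only on the common data, because the relevant $\gamma'\in F_n\cap\Gamma_{n-1}$ lies in $F_n\setminus F_n w$ for some $w\in R_{n-1}$. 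This is exactly the ``forcing the border'' mechanism, and it is what makes the partitions separate points. You should relocate your discussion of (C4) to this step.
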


\begin{proof}
 Condition (C2) implies that $\bigcap_{n\geq 0}[B_{n,1}]$
is non empty, and since $G=\bigcup_{n\geq 0} F_n$, there is only
one element $x_0$ in  this intersection.  Let $X$ be the orbit
closure  of $x_0$. For every $n\geq 0$ and $1\leq k\leq k_n$, we
denote $C_{n,k}=[B_{n,k}]\cap X$.

\medskip

{\it Claim:} For every $m>n\geq 0$, $1\leq k\leq k_m$ and $\gamma\in F_m\cap\Gamma_n$,  
\begin{equation}\label{inclusion}
\sigma^{\gamma^{-1}}(B_{m,k})|_{F_{n}}\in\{ B_{n,i}: 1\leq i\leq k_{n}\}.
\end{equation}

\medskip

Condition (C1) implies that (\ref{inclusion}) holds when $n=m-1$. We will show the claim by induction on $n$.

Suppose that for every   $1\leq k\leq k_m$ and $\gamma\in F_m\cap\Gamma_{n+1}$,  
$$
\sigma^{\gamma^{-1}}(B_{m,k})|_{F_{n+1}}\in\{ B_{n+1,i}: 1\leq i\leq k_{n+1}\}.
$$
Let $g\in \Gamma_n\cap F_m$. Condition (F3) implies there exist $v\in\Gamma_{n+1}\cap F_m$ and $u\in F_{n+1}$ such that $g=vu$. Thus we get
$$
\sigma^{g^{-1}}(B_{m,k})|_{F_n}=\sigma^{u^{-1}v^{-1}}(B_{m,k})=\sigma^{v^{-1}}(B_{m,k})|_{uF_n}.
$$
Since $u\in \Gamma_n\cap F_{n+1}$, condition (F3) implies that $uF_n\subseteq F_{n+1}$. Then by hypothesis, there exists $1\leq l\leq k_{n+1}$ such that
$$
\sigma^{v^{-1}}(B_{m,k})|_{uF_n}=B_{n+1,l}|_{uF_n},
$$ 
which is equal to some $B_{n,s}$, by (C1). This shows the claim.

\medskip

From (\ref{inclusion})  we
deduce that $\sigma^{\gamma^{-1}}(x_0)|_{F_n}\in\{ B_{n,i}: 1\leq i\leq k_n\}$, for
every $\gamma\in \Gamma_n$. Thus if $g$ is any element in $G$,
and $u\in F_n$ and $\gamma\in \Gamma_n$ are such that $g=\gamma u$, then

$\sigma^{g^{-1}}(x_0)=\sigma^{u^{-1}}(\sigma^{\gamma^{-1}}(x_0))\in
\sigma^{u^{-1}}(C_{n,k})$, for some $1\leq k\leq k_n$. It follows that
$$
\P_n=\{\sigma^{u^{-1}}(C_{n,k}): 1\leq k\leq k_n, u\in
F_n\}
$$
is a clopen covering of $X$.

From  condition (C2) and (\ref{inclusion}) we get that  $\sigma^{\gamma^{-1}}(x_0)|_{F_{n-1}}  = B_{n-1,1}$ for any $\gamma \in \Gamma_n$, which implies that $F_{n-1}\subseteq Per(x_0,\Gamma_n)$.
This shows that $x_0$ is Toeplitz.

Now we will show that $\P_n$ is a partition.  Suppose that $1\leq k,l\leq k_n$ and $u\in F_n$ are such that $\sigma^{u^{-1}}(C_{n,k})\cap C_{n,l}\neq \emptyset$.  Then there exist $x\in C_{n,k}$ and $y\in C_{n,l}$ such that $\sigma^{u^{-1}}(x)=y$.  From this we have $x(uv)=y(v)$ for every $v\in G$. In particular, $x(uv)=y(v)$ for every $v\in F_n\cap u^{-1}F_n$, which implies $B_{n,k}(uv)=B_{n,l}(v)$  for every $v\in F_n\cap u^{-1}F_n$. From condition (C3)  we get $u=e$ and $k=l$. This  
 ensures that the {\em set of return times} of $x_0$ to
$\bigcup_{k=1}^{k_n}C_{n,k}$, {\em i.e.} the set $\{g \in G: \sigma^{g^{-1}}(x_0) \in \bigcup_{k=1}^{k_n}C_{n,k}  \}$, is $\Gamma_n$. From this it follows that $\P_n$ is a r-K-R  partition. From (C1) we have that $\P_{n+1}$ is finer than $\P_n$ and that $C_{n+1}\subseteq \bigcup_{k=1}^{k_n}C_{n,k}=C_n$. By the definition of $x_0$ we have that $\{x_0\}=\bigcap_{n\geq 0} C_n$.

Now we will show that $(\P_n)_{n\geq 0}$ spans the topology of $X$.
Since every $\P_n$ is a partition, for every $n\geq 0$
and every $x\in X$ there are unique $v_n(x)\in F_n$ and $1\leq
k_n(x) \leq k_n$ such that
$$
x\in \sigma^{v_n(x)^{-1}}(C_{n,k_n(x)}).
$$
The collection $(\P_n)_{n\geq 0}$ spans the topology of $X$ if and
only if $(v_n(x))_{n\geq 0}=(v_n(y))_{n\geq 0}$ and
$(k_n(x))_{n\geq 0}=(k_n(y))_{n\geq 0}$  imply $x=y$.

Let $x, y\in X$ be two sequences such that  $v_n(x)=v_n(y)=v_n$
and $k_n(x)=k_n(y)$ for every $n\geq 0$. Let $g\in G $ be such that
$x(g) \neq y(g)$.

We have then for any  $n\ge 0$
$$\sigma^{v_n}(x)|_{F_n} = \sigma^{v_n}(y)|_{F_n} \in\{B_{n,i}:1\leq i\leq
k_{n}\}, $$
and then
$$x|_{v_n^{-1}F_n} =y|_{v_n^{-1}F_n}.$$
Thus by definition, we get $g \not\in v_n^{-1}F_n$ for any $n$. We
can take $n$ sufficiently large in order that $g\in F_{n-1}$.

Let $\gamma\in \Gamma_n$ and $u\in F_n$ such that $v_n(x)g=\gamma
u$.  Observe that $ug^{-1}\notin F_n$. Indeed, if $ug^{-1}\in
F_n$, then the relation $v_n(x)=\gamma ug^{-1}$ implies
$\gamma=e$, but in that case we get $v_n(x)g=u\in F_n$ which is
not possible by hypothesis.  By the condition (C1), there exists an index $1 \le i\le k_n$ such that
$ \sigma^{\gamma^{-1}}(\sigma^{v_n}(x))|_{F_n} = B_{n,i}$ and then
$$x(g) = \sigma^{\gamma^{-1}}\sigma^{v_n}(x)(\gamma^{-1} v_n g) = B_{n,i} (u).$$

Let $\gamma'\in \Gamma_{n-1}\cap F_n$
and $u'\in F_{n-1}$ such that $u=\gamma'u'$. Since
$\gamma'u'g^{-1}=ug^{-1}\notin F_n$, we get $\gamma'\in
F_n\setminus F_ngu'^{-1}$. This implies that $\gamma'\in
F_n\setminus F_nw$, for $w=gu'^{-1}\in R_{n-1}$ and $B_{n,i} (u)= B_{n-1,k_{n-1}}(u')$ by the condition (C4).
Thus $x(g)=B_{n-1,k_{n-1}}(u')$. The same argument implies that $y(g)=B_{n-1,k_{n-1}}(u')=x(g)$ and we obtain a contradiction.

This shows that $(\P_n)_{n\geq 0}$ is a sequence of nested r-K-R partitions of $X$. 

The point (3), (4) and (5) follows from Propositions \ref{measures and odometer}.
\end{proof}

The next result shows that, up to telescope a managed sequence
of matrices, it is possible to obtain a managed sequence of matrices with
sufficiently large coefficient to satisfy the conditions of Lemma
\ref{sufficient-condition}.

\begin{lemma}\label{subsequence}  
Let $(M_n)_{n\geq 0}$ be a sequence of matrices managed by $(|F_n|)_{n\geq 0}$. Let $k_n$ be the number of rows of $M_n$, for every $n\geq 0$.


Then there exists an increasing sequence $(n_i)_{i\geq 0}\subseteq
\NN$ such that for every $i\geq 0$ and every $1\leq k \leq
k_{n_{i+1}}$,
\begin{itemize}
\item[(i)] $R_{n_i}\subseteq F_{n_{i+1}}$,

\item[(ii)] For every $1\leq l\leq k_{n_i}$,
$$
M_{n_i}M_{n_i+1}\cdots M_{n_{i+1}-1}(l,k)>
 1+|\bigcup_{g\in R_{n_i}}F_{n_{i+1}}\setminus
F_{n_{i+1}}g^{-1}|
$$
\end{itemize}
If in addition there exists a constant $K>0$ such that  $k_{n+1}\leq K\frac{|F_{n+1}|}{|F_n|}$ for every $n\geq 0$, then the sequence $(n_i)_{i\geq 0}$ can be chosen in order that
\begin{itemize}
\item[(iii)] $k_{n_{i+1}}< M_{n_i}\cdots M_{n_{i+1}-1}(i,k),$ for
every $1\leq i\leq k_{n_i}$.
\end{itemize}
\end{lemma}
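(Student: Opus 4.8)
The plan is to prove Lemma \ref{subsequence} by choosing the subsequence $(n_i)_{i\ge 0}$ inductively, using the fact that a managed sequence of matrices has column sums $|F_{n+1}|/|F_n|\to\infty$, so that telescoped products of consecutive matrices have entries that grow without bound. First I would set $n_0=0$. Assuming $n_0<\dots<n_i$ have been chosen, I would pick $n_{i+1}>n_i$ large enough to satisfy (i) and (ii) simultaneously. For (i), since $(F_n)_n$ is an increasing exhaustion of $G$ by (F1) and (F2) and $R_{n_i}=F_{n_i}F_{n_i}^{-1}\cup F_{n_i}^{-1}F_{n_i}$ is a fixed finite set, we have $R_{n_i}\subseteq F_m$ for all $m$ large; so (i) holds for all sufficiently large $n_{i+1}$. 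For (ii), I would observe that each entry of the product $M_{n_i}M_{n_i+1}\cdots M_{m-1}$ is bounded below by a quantity tending to infinity as $m\to\infty$: indeed, since each $M_j$ is a positive integer matrix with every column summing to $|F_{j+1}|/|F_j|\ge 2$, one shows by a straightforward induction on the number of factors that every entry of $M_{n_i}\cdots M_{m-1}$ is at least $\prod_{j=n_i}^{m-1}\big(\text{something}\big)$ — more precisely, since in each step multiplying by $M_j$ on the right replaces a column by a nonnegative integer combination of columns of the current product with coefficients summing to $|F_{j+1}|/|F_j|$, and the current product has all entries $\ge 1$ once it has at least one factor (because $M_{n_i}$ is a positive matrix, having no zero entries is not guaranteed, but $M_{n_i}$ managed only guarantees nonnegativity — here one uses that after telescoping enough the entries are strictly positive; in fact $M_{n_i}$ itself need only be nonnegative, but the product of enough of them is strictly positive because the column sums exceed the number of rows is not automatic either). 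I will instead argue directly: the sum of the entries in each column of $M_{n_i}\cdots M_{m-1}$ equals $|F_m|/|F_{n_i}|$, which tends to infinity; and by a pigeonhole/averaging argument combined with the fact that managed matrices have no identically zero row (else a state would vanish, contradicting simplicity) one can guarantee that after enough steps every entry is as large as desired. The right-hand side of (ii), namely $1+|\bigcup_{g\in R_{n_i}}(F_{n_{i+1}}\setminus F_{n_{i+1}}g^{-1})|$, is itself bounded: since $(F_n)_n$ is F\o lner, $|F_{n_{i+1}}\setminus F_{n_{i+1}}g^{-1}|/|F_{n_{i+1}}|\to 0$, but $|F_{n_{i+1}}|$ itself grows, so the RHS is not a priori bounded — however it grows like $o(|F_{n_{i+1}}|)$ while the LHS grows like a positive proportion of $|F_{n_{i+1}}|/|F_{n_i}|$ times entries that can be forced large, so the LHS dominates for $m=n_{i+1}$ large. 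Making this comparison precise is the crux.

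More carefully, I would proceed as follows. Fix $i$ and $n_i$. For $m>n_i$ write $P^{(m)}=M_{n_i}M_{n_i+1}\cdots M_{m-1}$, a $k_{n_i}\times k_m$ nonnegative integer matrix each of whose columns sums to $|F_m|/|F_{n_i}|$. The key quantitative input is: there is a constant $c=c(n_i)>0$ such that every entry $P^{(m)}(l,k)\ge c\,|F_m|/|F_{n_i}|$ for all $m$ large enough — or at least $\min_{l,k}P^{(m)}(l,k)\to\infty$. To see the latter it suffices that for some $m_0$ the matrix $P^{(m_0)}$ is strictly positive (all entries $\ge 1$); then for $m>m_0$, $P^{(m)}=P^{(m_0)}M_{m_0}\cdots M_{m-1}$, and since $M_{m_0}\cdots M_{m-1}$ has each column summing to $|F_m|/|F_{m_0}|\to\infty$ and $P^{(m_0)}$ has positive entries, each entry of $P^{(m)}$ is bounded below by $|F_m|/|F_{m_0}|\cdot\min_{l,k}P^{(m_0)}(l,k)\to\infty$. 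That $P^{(m_0)}$ is strictly positive for some $m_0$ follows because the dimension group is simple — equivalently, the Bratteli diagram is simple — which is exactly the hypothesis under which these matrices are used in the paper; alternatively, if one does not wish to invoke simplicity, strict positivity of sufficiently long telescoped products can be arranged as part of the running assumptions. Granting $\min_{l,k}P^{(m)}(l,k)\to\infty$ as $m\to\infty$, inequality (ii) with $n_{i+1}=m$ becomes: for all $m$ large, $\min_{l,k}P^{(m)}(l,k) > 1+|\bigcup_{g\in R_{n_i}}(F_m\setminus F_m g^{-1})|$. Since the right side is $\le 1+|R_{n_i}|\cdot\max_{g\in R_{n_i}}|F_m\setminus F_m g^{-1}|=1+o(|F_m|)$ by the F\o lner property, and actually one needs the stronger comparison that $\min_{l,k}P^{(m)}(l,k)$ beats this. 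Here is where I would be careful: $\min_{l,k}P^{(m)}(l,k)$ could in principle grow slowly while $|F_m\setminus F_m g^{-1}|$ grows faster (both are $o(|F_m|)$ relative to column sum $|F_m|/|F_{n_i}|$ versus $|F_m|$ respectively — note the column sum is $|F_m|/|F_{n_i}|$, a fixed fraction of $|F_m|$). So in fact $\sum_l P^{(m)}(l,k)=|F_m|/|F_{n_i}|$ is a fixed positive fraction of $|F_m|$, whereas $|F_m\setminus F_m g^{-1}|/|F_m|\to 0$; hence the average entry $\frac{1}{k_{n_i}}|F_m|/|F_{n_i}|$ beats the RHS eventually, and with strict positivity (simplicity) the minimum entry is a fixed fraction of the average, so the minimum entry also beats the RHS eventually. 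Choosing $n_{i+1}$ larger than the threshold from (i) and this threshold gives (i) and (ii).

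For the additional conclusion (iii), I would use the extra hypothesis $k_{n+1}\le K|F_{n+1}|/|F_n|$. With the same notation, (iii) asks $k_{n_{i+1}}<P^{(n_{i+1})}(i,k)$ for $1\le i\le k_{n_i}$ and all $k$. We have $k_{n_{i+1}}=k_m\le K|F_m|/|F_{m-1}|\le K|F_m|/|F_{n_{i+1}-1}|$, and this is $o(|F_m|/|F_{n_i}|)$ is false — it is comparable — so again one compares $P^{(m)}(i,k)$, which is a positive fraction of $|F_m|/|F_{n_i}|=(|F_m|/|F_{n_i}|)$, against $k_m\le K|F_m|/|F_{m-1}|$. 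The point is that $|F_m|/|F_{m-1}|$ can be made negligible compared to $|F_m|/|F_{n_i}|$ by first telescoping so that the single last jump $|F_m|/|F_{m-1}|$ is small relative to the whole product — but that is not controllable since it is the definition of $F$. Instead, the right move is: having fixed $n_i$, choose $n_{i+1}=m$ so large that $\min_{l,k}P^{(m)}(l,k)\ge \max\{2,\,k_m\}+1+|\bigcup_{g\in R_{n_i}}(F_m\setminus F_m g^{-1})|$; this is possible because the left side grows to infinity with $m$ (by simplicity, as above) while the right side, although also growing, grows like $O(|F_m|/|F_{m-1}|)+o(|F_m|)$, which is of smaller order than $|F_m|/|F_{n_i}|\asymp\min_{l,k}P^{(m)}(l,k)$ once $n_i$ is fixed — because after fixing $n_i$, for $m$ ranging to infinity the ratio $\frac{|F_m|/|F_{n_i}|}{|F_m|/|F_{m-1}|}=\frac{|F_{m-1}|}{|F_{n_i}|}\to\infty$. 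This divergence is precisely what makes (iii) attainable, and it is the heart of the argument: the telescoped column sum $|F_m|/|F_{n_i}|$ outgrows both the F\o lner "border" term $o(|F_m|)$ and the single-step dimension bound $K|F_m|/|F_{m-1}|$, hence outgrows $k_m$. The main obstacle is establishing the lower bound $\min_{l,k}P^{(m)}(l,k)\ge c\,|F_m|/|F_{n_i}|$ for a fixed $c>0$ (equivalently, uniform primitivity of the telescoped products), which rests on the simplicity of the underlying dimension group — I would either invoke that hypothesis explicitly or, if it is not available at this point in the paper, first telescope the managed sequence once more to make all the matrices $M_n$ strictly positive, which a managed sequence can always be refined to satisfy, and then the lower bound is immediate from strict positivity and the column-sum identity. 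Everything else is bookkeeping with the F\o lner estimates already established in Section \ref{suitable-sequence}.
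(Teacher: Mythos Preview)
Your approach is essentially the paper's: build $(n_i)$ inductively, use the exhaustion $G=\bigcup_n F_n$ for (i), combine the F\o lner property with a lower bound on the entries of the telescoped product $P^{(m)}=M_{n_i}\cdots M_{m-1}$ for (ii), and for (iii) compare that lower bound against $k_m\le K|F_m|/|F_{m-1}|$ via $|F_{m-1}|/|F_{n_i}|\to\infty$.

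The one place you take an unnecessary detour is the lower bound on $\min_{l,k}P^{(m)}(l,k)$. You worry that managed matrices might be merely nonnegative and therefore reach for simplicity of the dimension group to force eventual strict positivity. In this paper, ``managed'' is defined for sequences of \emph{positive} integer matrices, i.e.\ every entry is at least $1$. With that in hand the paper gets, by a one-line induction, the clean bound $M_{n_i}\cdots M_{m-1}(l,k)\ge |F_m|/|F_{n_i+1}|$ (each entry of $M_{n_i}$ is $\ge 1$, so each entry of $M_{n_i}M_{n_i+1}$ dominates the column sum $|F_{n_i+2}|/|F_{n_i+1}|$ of $M_{n_i+1}$, etc.). From there (ii) follows by picking $\varepsilon$ with $\varepsilon|R_{n_i}|<1$ and using F\o lner to make $|F_m\setminus F_mg^{-1}|<\varepsilon|F_m|/|F_{n_i+1}|$ for every $g\in R_{n_i}$; and (iii) follows by taking $m$ large enough that $K<|F_m|/|F_{n_i+1}|$, so that $k_{m+1}\le K|F_{m+1}|/|F_m|<|F_{m+1}|/|F_{n_i+1}|\le M_{n_i}\cdots M_m(l,k)$. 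So your argument is correct, just longer than necessary because you did not use the strict positivity that is already part of the hypothesis.
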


\begin{proof}
We define $n_0=0$. Let $i\geq 0$ and suppose that we have defined
$n_j$ for every $0\leq j\leq i$. Let $m_0>n_i$ be such that for
every $m\geq m_0$,
$$
R_{n_i}\subseteq F_m.
$$
Let $0<\varepsilon<1$ be such that $\varepsilon|R_{n_i}|<1$. Since
$(F_n)_{n\geq 0}$ is a F\o lner sequence, there exists $m_1>m_0$
such that for every $m\geq m_1$,
\begin{equation}\label{eq2-subsequence}
\frac{|F_m\setminus F_m
g^{-1}|}{|F_m|}<\frac{\varepsilon}{|F_{n_i+1}|}, \mbox{ for every
} g\in R_{n_i}.
\end{equation}
Since $\varepsilon|R_{n_i}|<1$, there exists $m_2>m_1$ such that
for every $m\geq m_2$,
$$
1- \frac{|F_{n_i+1}|}{|F_m|}>\varepsilon|R_{n_i}|.
$$
Then
$$
\frac{|F_m|}{|F_{n_i+1}|}-1>\varepsilon|R_{n_i}|\frac{|F_m|}{|F_{n_i+1}|}.
$$
Since the matrices $M_n$ are positive, using induction on $m$ and condition (2)  for managed sequences, we get
$$
M_{n_i}\cdots M_{m-1}(l,j)\geq \frac{|F_m|}{|F_{n_i+1}|}, \mbox{
for every } 1\leq l\leq k_{n_i}, 1\leq j\leq k_{m}.
$$
Combining the last two equations we get
$$
M_{n_i}\cdots
M_{m-1}(l,j)-1>\varepsilon|R_{n_i}|\frac{|F_m|}{|F_{n_i+1}|},
$$
and from equation (\ref{eq2-subsequence}), we obtain
$$
M_{n_i}\cdots M_{m-1}(l,j)-1>|F_m\setminus F_m
g^{-1}||R_{n_i}|, \mbox{ for every } g\in R_{n_i},
$$
which finally implies that
$$
M_{n_i}\cdots M_{m-1}(l,j)>|\bigcup_{g\in R_{n_i}} F_m\setminus
F_mg^{-1}|+ 1, \mbox{ for every } 1\leq l\leq k_{n_i},
1\leq j\leq k_{m}.
$$
Now, suppose there exists $K>0$ such that  $k_{m+1}\leq K\frac{|F_{m+1}|}{|F_m|}$ for every $m\geq 0$.  The property (2) for managed sequences of matrices implies  
$$
M_{n_i}\cdots M_m(l,j)\geq \frac{|F_{m+1}|}{|F_{n_i+1}|} \mbox{ for every } m> n_{i}.
$$
Let $m_3>m_2$ be such that  $K< \frac{|F_{m}|}{|F_{n_i+1}|}$ for every $m\geq m_3$. Then for every $m\geq m_3$ we have
$$
k_{m+1}\leq K\frac{|F_{m+1}|}{|F_{n_i}|}\leq  M_{n_i}\cdots M_{m}(l,j) \mbox{ for every } 1\leq
l\leq k_{n_i} \mbox{ and } 1\leq j\leq k_{m+1}.
$$
By taking $n_{i+1}\geq m_3$ we get the desired subsequence
$(n_i)_{i\geq 0}\subseteq \NN$.
\end{proof}

The following proposition shows that given a managed sequence,
there exists a sequence of decorations verifying conditions
(C1)-(C4). The aperiodicity condition (C3) is obtained by
decorating the center of $F_n$ in a unique way with respect to
other places in $F_n$. A restriction on the number of columns of
the matrices  gives enough choices of
coloring to ensure conditions (C3) and (C4).

\begin{proposition}\label{construction}
Let  $(M_n)_{n\geq 0}$ be a sequence of matrices which is
managed by $(|F_n|)_{n\geq 0}$. For every $n\geq
0$, we denote by $k_n$  the number of rows of $M_n$.  Suppose in addition there exists $K>0$ such that $k_{n+1}\leq K\frac{|F_{n+1}|}{|F_n|}$, for every $n\geq 0$.  Then there
exists a Toeplitz subshift $(X,\sigma|_X, G)$ verifying the
following three conditions:
\begin{enumerate}
\item The set of invariant probability measures of $(X,\sigma|_X,
G)$ is affine homeomorphic to
$\varprojlim_n(\triangle(k_n,|F_n|),M_n)$.
\item The ordered group $\G(X,\sigma|_X,G)$ is isomorphic to
$(H/inf(H), (H/inf(H))^+,u+inf(H))$, where $(H,H^+)$ is given by 
$$
\xymatrix{{\ZZ^{}}\ar@{->}[r]^{M^T} &{\ZZ^{k_0}}\ar@{->}[r]^{M^T_0} & {\ZZ^{k_1}}\ar@{->}[r]^{M_1^T} &
{\ZZ^{k_2}}\ar@{->}[r]^{M^T_2}& {\cdots}},
$$
with $M=|F_0|(1,\cdots,1)$ and $u=[M^T,0]$.
 
 \item $(X,\sigma|_X, G)$ is an almost 1-1 extension of the
odometer $O=\varprojlim_n(G/\Gamma_n,\pi_n)$.
\end{enumerate}
\end{proposition}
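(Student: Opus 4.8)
The plan is to realize the prescribed sequence $(M_n)_n$ --- after a harmless telescoping --- as the sequence of incidence matrices of a nested sequence of r-K-R partitions of a suitable Toeplitz $G$-subshift, built by an explicit recursive decoration of the fundamental domains $F_n$; the three conclusions then come essentially for free from Lemma~\ref{sufficient-condition} (that is, from Proposition~\ref{measures and odometer}).

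\emph{Step 1: reduction by telescoping.} Using the hypothesis $k_{n+1}\le K|F_{n+1}|/|F_n|$, I would first apply Lemma~\ref{subsequence} to obtain an increasing sequence $(n_i)_i\subseteq\NN$, and replace $(M_n)_n$, $(\Gamma_n)_n$, $(F_n)_n$ by the telescoped data $M_{n_i}M_{n_i+1}\cdots M_{n_{i+1}-1}$, $\Gamma_{n_i}$, $F_{n_i}$. One checks routinely that the new F\o lner sequence still satisfies (F1)--(F3), that the new matrix sequence is still managed, and that passing to a cofinal subsequence changes the inverse limit $\varprojlim_n(\triangle(k_n,|F_n|),M_n)$ only up to affine homeomorphism, the unital ordered group of conclusion (2) only up to isomorphism, and the odometer $\varprojlim_n(G/\Gamma_n,\pi_n)$ only up to isomorphism. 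Hence it suffices to treat the case in which conclusions (i)--(iii) of Lemma~\ref{subsequence} hold for every $n$: namely $R_n\subseteq F_{n+1}$; $M_n(l,k)>1+|\partial_n|$, where $\partial_n:=\bigcup_{g\in R_n}\bigl(F_{n+1}\setminus F_{n+1}g^{-1}\bigr)$; and $k_{n+1}<M_n(l,k)$ for all $l,k$. I may also assume $k_n\ge3$ for all $n$, replacing the managed sequence by an equivalent one if some $k_n=2$.

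\emph{Step 2: recursive decoration.} Fix a finite alphabet $\Sigma$ with $|\Sigma|$ large, and pick $k_0$ pairwise distinct, sufficiently generic blocks $B_{0,1},\dots,B_{0,k_0}\in\Sigma^{F_0}$ (generic so that (C3) holds at level $0$). Given the blocks at level $n$, write $F_{n+1}=\bigsqcup_{\gamma\in T}\gamma F_n$ with $T:=F_{n+1}\cap\Gamma_n$ and $e\in T$ (condition (F3)); note $e\notin\partial_n$ because $R_n\subseteq F_{n+1}$, so the ``central tile'' $eF_n$ and the ``border tiles'' $T\cap\partial_n$ are disjoint. For each $1\le k\le k_{n+1}$ I would choose a labeling $j_k\colon T\to\{1,\dots,k_n\}$ with $j_k(e)=1$, with $j_k\equiv k_n$ on $T\cap\partial_n$, and with $|j_k^{-1}(i)|=M_n(i,k)$ for every $i$ --- this is possible because $\sum_iM_n(i,k)=|T|$, $M_n(1,k)\ge1$, and $M_n(k_n,k)>|\partial_n|\ge|T\cap\partial_n|$ by (ii) --- and set $B_{n+1,k}(\gamma h):=B_{n,j_k(\gamma)}(h)$ for $\gamma\in T$, $h\in F_n$. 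Any such family automatically satisfies (C1) (each tile carries a level-$n$ block), (C2) ($B_{n+1,k}|_{F_n}=B_{n,1}$) and (C4) (border tiles carry $B_{n,k_n}$), and the incidence matrix of $\P_{n+1}$ over $\P_n$ is $(i,k)\mapsto|j_k^{-1}(i)|=M_n(i,k)$, as desired. It then remains to use the freedom in $j_k$ on the free tiles $T\setminus(\{e\}\cup(T\cap\partial_n))$ to make $B_{n+1,1},\dots,B_{n+1,k_{n+1}}$ pairwise distinct and to enforce aperiodicity (C3): since $k_n\ge3$ there is a label distinct from the central label $1$ and the border label $k_n$, and since by (iii) every label occurs more than $k_{n+1}$ times one has at least $2k_{n+1}$ free tiles on which one can realize $\binom{2k_{n+1}}{k_{n+1}}\ge k_{n+1}$ different patterns without disturbing any occurrence count; a generic choice additionally destroys the finitely many shift-matches forbidden by (C3), using inductively that the level-$n$ blocks already satisfy (C3).

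\emph{Step 3: conclusion, and the main difficulty.} Applying Lemma~\ref{sufficient-condition} to the family $(\{B_{n,1},\dots,B_{n,k_n}\})_n$ produces a Toeplitz subshift $(X,\sigma|_X,G)$ with a nested sequence of r-K-R partitions whose incidence matrices are exactly $(M_n)_n$; its parts (3), (4) and (5) then give at once conclusions (3), (1) and (2) of the proposition. Undoing the telescoping of Step~1 --- which, as recorded there, leaves all three objects unchanged up to the relevant notion of isomorphism --- yields the statement for the original managed sequence. The hard part will be Step~2: fitting into one tiling of $F_{n+1}$ the forced central block, the forced border blocks and the prescribed occurrence numbers $M_n(\cdot,k)$, while still leaving enough room to separate the $k_{n+1}$ level-$(n+1)$ blocks and to impose aperiodicity. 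It is precisely in order to create this room that Lemma~\ref{subsequence} --- hence the hypothesis $k_{n+1}\le K|F_{n+1}|/|F_n|$ --- is invoked beforehand.
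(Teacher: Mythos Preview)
Your overall architecture is exactly the paper's: telescope via Lemma~\ref{subsequence}, build blocks $B_{n,k}$ satisfying (C1)--(C4), and read off the conclusions from Lemma~\ref{sufficient-condition}. The gap is in Step~2, precisely at the point you yourself flag as ``the hard part'': the verification of (C3).

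Your inductive reduction is correct up to the point where one shows that any offending $g\in F_{n+1}$ must lie in $T=F_{n+1}\cap\Gamma_n$; this uses only (C3) at level $n$. But once $g\in T\setminus\{e\}$, the remaining condition is $j_k(g\gamma)=j_{k'}(\gamma)$ for all $\gamma\in T\cap g^{-1}T$, and your proposal to kill these ``finitely many shift-matches'' by a \emph{generic} choice of the labelings is not justified. There are on the order of $|T|\cdot k_{n+1}^2$ triples $(g,k,k')$ to rule out, the labelings $j_k$ are heavily constrained (center fixed to $1$, border fixed to $k_n$, all multiplicities $M_n(i,k)$ prescribed), and for $g$ close to $e$ the overlap $T\cap g^{-1}T$ is nearly all of $T$, so the forbidden configurations are not obviously sparse. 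Nothing in your counting (the $\binom{2k_{n+1}}{k_{n+1}}$ estimate) addresses this; that count only handles pairwise distinctness. Your side remark ``I may also assume $k_n\ge3$ \dots replacing the managed sequence by an equivalent one'' is similarly unsupported.

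The paper closes both issues with a single device you have omitted: before building blocks it replaces $(M_n)$ by an auxiliary sequence $(\tilde M_n)$ of $(k_n{+}1)\times(k_{n+1}{+}1)$ matrices whose first row is identically $1$, and invokes Lemma~\ref{nexo2} to see that the associated unital dimension group (hence the simplex of invariant measures and the ordered group) is unchanged. With $\tilde M_n(1,k)=1$ the block $B_{n,1}$ occurs \emph{exactly once} in every $B_{n+1,k}$, namely at the central tile $e$. Then for $g\in T$ the equation at $\gamma=e$ reads $j_k(g)=j_{k'}(e)=1$, which forces $g=e$ immediately; no genericity is needed, and $k_n+1\ge3$ holds for free. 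So your plan is right, but the missing ingredient is this passage to $(\tilde M_n)$ via Lemma~\ref{nexo2}; without it the (C3) step does not go through as written.
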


\begin{proof} 
Let $(n_i)_{i\geq 0}\subseteq \NN$ be a sequence as in Lemma
\ref{subsequence}. Since $(M_n)_{n\geq 0}$ and the sequence
$(M_{n_i}\cdots M_{n_{i+1}-1})_{i\geq 0}$ define the same inverse
and direct  limits, without loss of generality we can assume that
for every $n\geq 0$ we have:  
$$
R_n\subseteq F_{n+1}, 
$$
$$
M_n(i,k)> 1+|\bigcup_{g\in R_{n}}F_{n+1}\setminus
F_{n+1}g^{-1}| \mbox{ for every } 1\leq i\leq k_n,  1\leq k\leq
k_{n+1},
$$
and
$$
k_{n+1}< \min\{M_n(i,j): 1\leq i\leq k_n, 1\leq j\leq k_{n+1}\}.
$$
Let $\tilde{M}$ be the $1\times (k_{0}+1)$-dimensional matrix given by 
$$
\tilde{M}(\cdot,1)=\tilde{M}(\cdot,2)=M(\cdot,1),
$$
and $\tilde{M}(\cdot,k+1)=M(\cdot,k)$ for every $2\leq k\leq k_0$.
For every $n\geq 0$,  consider the $(k_n+1)\times(k_{n+1}+1)$-dimensional matrix given by
$$
\tilde{M}_n(\cdot,1)=\tilde{M}_n(\cdot, 2)=\left( \begin{array}{c}
                                                                                1\\
                         M_n(1,1)-1\\
M_n(2,1)\\
\vdots\\
M_n(k_n,1)
                                                                                 \end{array}
\right)
$$
and
$$
\tilde{M}_n(\cdot,k+1)=\left( \begin{array}{c}
1\\
M_n(1,k)-1\\
M_n(2,k)\\
\vdots\\
M_n(k_n,k)\\                                            
\end{array}
    \right) \mbox{ for every } 2\leq k\leq k_{n+1}.
$$
Lemma \ref{nexo2} implies that the dimension groups with unit given by
$$
\xymatrix{{\ZZ^{}}\ar@{->}[r]^{M^T} &{\ZZ^{k_0}}\ar@{->}[r]^{M^T_0} & {\ZZ^{k_1}}\ar@{->}[r]^{M_1^T} &
{\ZZ^{k_2}}\ar@{->}[r]^{M^T_2}& {\cdots}},
$$
and
$$
\xymatrix{{\ZZ^{}}\ar@{->}[r]^{\tilde{M}^T} &{\ZZ^{k_0+1}}\ar@{->}[r]^{\tilde{M}^T_0} & {\ZZ^{k_1+1}}\ar@{->}[r]^{\tilde{M}_1^T} &
{\ZZ^{k_2+1}}\ar@{->}[r]^{\tilde{M}^T_2}& {\cdots}},
$$
are isomorphic. 

Thus from Lemma \ref{nexo} we get that $\varprojlim_n(\triangle(k_n,|F_n|),M_n)$ and $\varprojlim_n(\triangle(k_n+1,|F_n|),\tilde{M}_n)$ are affine homeomorphic.  Observe that  $(\tilde{M}_n)_{\geq 0}$ is managed by $(|F_n|)_{n\geq 0}$ and verifies
for every $n\geq 0$:
$$
\tilde{M}_n(i,k)\geq 1+|\bigcup_{g\in R_{n}}F_{n+1}\setminus
F_{n+1}g^{-1}| \mbox{ for every } 2\leq i\leq k_n+1,  1\leq k\leq
k_{n+1}+1,
$$
and
$$
3\leq k_{n+1}+1\leq \min\{M_n(i,j): 2\leq i\leq k_n+1, 1\leq j\leq k_{n+1}+1\}.
$$
Thus,  by Lemma \ref{sufficient-condition}, to prove the proposition it is enough to find a Toeplitz subshift having a sequence of r-K-R-partitions whose sequence of incidence matrices is $(\tilde{M}_n)_{n\geq 0}$.

\medskip

For every $n\geq 0$, we call $l_n$ and $l_{n+1}$ the number of rows and columns of $\tilde{M}_n$ respectively.
 
\medskip

For every $n\geq 0$, we will construct a collection of functions
$B_{n,1},\cdots, B_{n,l_n}\in \Sigma^{F_n}$ as in Lemma
\ref{sufficient-condition}, where   $\Sigma=\{1,\cdots, l_0\}$. 

 For every $1\leq k\leq l_0$ we define $B_{0,k}\in\Sigma^{F_0}$ by $B_{0,k}(g)=k$, for every $g\in F_0$. Observe that the   collection
$\{B_{0,1},\cdots, B_{0,l_0}\}$ verifies condition (C3).



\medskip

Let $n\geq 0$. Suppose that we have defined $B_{n,1},\cdots,
B_{n,l_n}\in \Sigma^{F_n}$ verifying condition (C3). For $1\leq
k\leq l_{n+1}$, we define
$$
B_{n+1,k}|_{F_n}=B_{n,1},
$$
and
$$
\sigma^{s^{-1}}(B_{n+1,k})|_{F_n}=B_{n,l_n} \mbox{ for every } s\in  
\bigcup_{g\in R_{n}}F_{n+1}\setminus F_{n+1}g^{-1}\cap \Gamma_n.
$$
We fill the rest of the coordinates $v\in F_{n+1}\cap \Gamma_n$ in
order that $\sigma^{v^{-1}}(B_{n+1,k})|_{F_n}\in \{B_{n,1},\cdots, B_{n,l_n}\}$
and such that
$$|\{v\in F_{n+1}\cap \Gamma_n: \sigma^{v^{-1}}(B_{n+1,k})|_{F_n}=B_{n,i}\}|=\tilde{M}_n(i,k),$$
 for every $2\leq i\leq
l_n$. 

Since $\tilde{M}_n(1,k)=1$,  if $\sigma^{v^{-1}}(B_{n+1,k})|_{F_n}=B_{n,1}$ then $v=e$.


Notice that the number of $B\in\Sigma^{F_{n+1}}$ that we could choose to be equal to $B_{n+1,k}$ is at least $\tilde{M}_n(2,k)+1$, because there are at least $\tilde{M}_n(2,k)+1$ free coordinates to be filled with $\tilde{M}_{n}(2,k)$ copies of
$B_{n,2}$ and  one copy of 
$B_{n,l_n}$. Since  $\tilde{M}_n(2,k)+1\geq l_{n+1}$, the number of columns of $\tilde{M}_n$ which are equal to $\tilde{M}_n(\cdot,k)$ does not exceed the  number of possible choices of functions in $\Sigma^{F_{n+1}}$ in order that 
$B_{n+1,1},\cdots, B_{n+1,l_{n+1}}$ are pairwise different.  

By construction, every function $B_{n+1,k}$ verifies (C1), (C2)
and (C4). Let us assume there are $g\in F_{n+1}$ and $1\leq
k,k'\leq k_{n+1}$ such that $B_{n+1,k}(gv)=B_{n+1,k'}(v)$ for any
$v$ where it is defined, then by the induction hypothesis, $g\in
\Gamma_n$. This implies
$\sigma^{g^{-1}}(B_{n+1,k})|_{F_n}=B_{n+1,k'}|_{F_n}=B_{n,1}$ and then $g=e$. 
This shows that the collection $B_{n+1,1},\cdots,
B_{n+1,l_{n+1}}$ verifies (C3). We conclude applying Lemma
\ref{sufficient-condition}.
\end{proof}

For positive integers $n_1,\cdots, n_k$,  we denote by $(n_1,\cdots, n_k)!$  the corresponding  multinomial coefficient. That is,
$$
(n_1,\cdots, n_k)!=\frac{(n_1+\cdots + n_k)!}{n_1!\cdots n_k!}.
$$

\begin{remark}\label{remark-bound}
{\rm  In Proposition \ref{construction}, to construct the collection of functions $(B_{n,1}\cdots, B_{n,l_n})_{n\geq 0}$ we just need that the number of columns  of $\tilde{M}_n$ which are equal to $\tilde{M}_n(\cdot,k)$ does not exceed the number of possible ways to construct different functions $B\in \Sigma^{F_n}$ verifying $B|_{F_{n-1}}=B_{n-1,1}$ and $B|_{vF_{n-1}}=B_{n-1,l_{n-1}}$ for every $v \in  
\bigcup_{g\in R_{n-1}}F_{n}\setminus F_{n}g^{-1}\cap \Gamma_{n-1}.$
In other words, it is possible to make this construction with $\tilde{M}_n$ verifying the following  property:   for every $1\leq k\leq l_{n+1}$ the number of $1\leq l\leq l_{n+1}$  such that $\tilde{M}_n(\cdot,l)=\tilde{M}_n(\cdot,k)$ is not grater than $$(\tilde{M}_n(2,k),\cdots, \tilde{M}_n(l_n-1,k),\tilde{M}_n(l_n,k)-|\bigcup_{g\in R_{n-1}}F_{n}\setminus F_{n}g^{-1}\cap \Gamma_{n-1}|)!$$
Among the hypothesis of  Proposition \ref{construction}, we ask a stronger condition on the number of columns of $M_n$ which  is stable under multiplication of matrices,  unlike the condition that we mention in this remark.
}
\end{remark}



\section{Characterization of Choquet simplices}

A compact, convex, and metrizable subset $K$ of a locally convex
real vector space is said to be a (metrizable) Choquet simplex, if
for each $v\in K$ there is a unique probability measure $\mu$
supported on the set of extreme points of $K$ such that $\int x
d\mu(x)=v$.

\medskip

 In this section we show that any metrizable
Choquet simplex is affine homeomorphic to the inverse limit defined by a managed sequence
of matrices satisfying the additional restriction on the number of columns.    




\subsection{Finite dimensional Choquet simplices}
For technical reasons, we have to separate the  finite and the infinite dimensional cases.

\begin{lemma}\label{lemma-inverse-limit-1} Let $K$ be a finite
dimensional metrizable Choquet simplex with exactly $d\geq 1$
extreme points. Let $(p_n)_{n\geq 0}$ be an increasing sequence of
positive integers such that for every $n\geq 0$ the integer $p_n$
divides $p_{n+1}$, and let $k\geq \max\{2,d\}$. Then   there exist
an increasing  subsequence $(n_i)_{i\geq 0}$ of indices and a
sequence $(M_i)_{i\geq 0}$ of square $k$-dimensional matrices
which is managed by $(p_{n_i})_{i\geq 0}$  such
that $K$ is affine homeomorphic to
$\varprojlim_n(\triangle(k,p_{n_i}), M_i)$.
\end{lemma}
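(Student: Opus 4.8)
The plan is to realize $K$ by an inverse limit of transition matrices that differ only by the scalars forced by the column‑sum condition, using an idempotent ``projection'' matrix so that the limit collapses onto a single face of the ambient simplex. Recall that a finite‑dimensional metrizable Choquet simplex with exactly $d$ extreme points is affine homeomorphic to the standard $(d-1)$‑simplex, hence to $\triangle(d,p)$ for every integer $p\ge 1$; our target is therefore to produce managed $k\times k$ matrices whose inverse limit is a $(d-1)$‑dimensional face of some $\triangle(k,p)$.

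Concretely, I would take $n_i=i$ (any increasing sequence of indices works) and fix once and for all the $k\times k$ matrix $P$ with $P(j,j)=1$ for $1\le j\le d$, $P(1,j)=1$ for $d<j\le k$, and all other entries $0$. One checks directly that $P$ has non‑negative integer entries, that every column sum of $P$ equals $1$, that $P^2=P$, that the image of $P$ is the coordinate subspace $V=\{y\in\RR^k:\ y_{d+1}=\cdots=y_k=0\}$, and that $P$ restricts to the identity on $V$. Setting $q_i=p_{n_{i+1}}/p_{n_i}$, which is a positive integer because $p_{n_i}\mid p_{n_{i+1}}$, I would define $M_i=q_iP$. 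Then each $M_i$ is a square $k$‑dimensional matrix with non‑negative integer entries, all of whose column sums equal $q_i=p_{n_{i+1}}/p_{n_i}$, and since $k\ge 2$ the row/column requirement is met; thus $(M_i)_{i\ge0}$ is managed by $(p_{n_i})_{i\ge0}$, and $M_i(\triangle(k,p_{n_{i+1}}))\subseteq\triangle(k,p_{n_i})$.

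It then remains to identify $\varprojlim_i(\triangle(k,p_{n_i}),M_i)$. Because $P^i=P$, telescoping gives $M_0\cdots M_{i-1}=(p_{n_i}/p_{n_0})\,P$. If $(x^{(i)})_i$ lies in the inverse limit, then $x^{(i)}=q_iP x^{(i+1)}$ belongs to the image of $P$, so $Px^{(i)}=x^{(i)}$, and combining this with $x^{(0)}=(p_{n_i}/p_{n_0})Px^{(i)}$ forces $x^{(i)}=(p_{n_0}/p_{n_i})\,x^{(0)}$ for all $i$; hence the sequence is determined by $x^{(0)}$, which ranges exactly over $F:=V\cap\triangle(k,p_{n_0})$. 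Conversely, for $y\in F$ the sequence $x^{(i)}=(p_{n_0}/p_{n_i})\,y$ lies in the inverse limit, the identity $M_ix^{(i+1)}=x^{(i)}$ being immediate from $Py=y$ and $q_i/p_{n_{i+1}}=1/p_{n_i}$. Thus the zeroth‑coordinate projection is an affine homeomorphism from $\varprojlim_i(\triangle(k,p_{n_i}),M_i)$ onto $F$, and $F$ is a $(d-1)$‑dimensional face of $\triangle(k,p_{n_0})$, hence affine homeomorphic to $\triangle(d,p_{n_0})$ and therefore to $K$. The only point requiring care is precisely this last collapse — ensuring the inverse limit does not grow but lands on a single face — and that is exactly what $P^2=P$ together with $P$ being the identity on its image delivers; the rest (non‑negativity and column‑sum bookkeeping for ``managed'', affineness and continuity of coordinate projections) is routine. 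One checks the degenerate cases for sanity: when $d=k$ the matrix $P$ is the identity and the inverse limit is a single rescaled copy of $\triangle(k,p_{n_0})$, and when $d=1$ it collapses to a point, both consistent with $K$.
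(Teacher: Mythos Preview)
Your argument correctly computes the inverse limit when the transition maps are scalar multiples of an idempotent projection, but it does not produce a \emph{managed} sequence in the paper's sense. The definition in Section~\ref{realization-ordered-groups} asks for a sequence of \emph{positive} integer matrices, meaning all entries strictly positive; this is not cosmetic but is used essentially in Lemma~\ref{subsequence} (to get the lower bound $M_{n_i}\cdots M_{m-1}(l,j)\geq |F_m|/|F_{n_i+1}|$) and hence in Proposition~\ref{construction}, which is exactly where the present lemma feeds into the proof of Theorem~A. Your matrices $M_i=q_iP$ have many zero entries---for instance $M_i(2,j)=0$ for every $j\neq 2$ when $d\geq 2$, and every row below the first vanishes when $d=1$---so $(M_i)_i$ is not managed, and the lemma as stated is not proved.

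This is precisely the obstacle that makes the lemma nontrivial. If non-negative entries were allowed, collapsing onto a $d$-dimensional face via an idempotent would indeed be the natural move, and your identification of the limit is fine. The paper's proof is forced to work harder: it uses strictly positive matrices that are perturbations of a rank-$d$ map (diagonal entries $p_{n_{i+1}}/p_{n_i}-k(k-1)$ on the first $d$ columns, off-diagonal entries equal to $k$, the remaining $k-d$ columns copies of the $d$-th), and then chooses the subsequence $(n_i)$ so that the accumulated products stay diagonally dominant in the limit, ensuring $\bigcap_i M_0\cdots M_i(\triangle(k,p_{n_{i+1}}))$ still has $d$ affinely independent extreme points rather than degenerating. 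The idempotent shortcut is simply unavailable once every entry must be positive.
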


\begin{proof} Let $k\geq \max\{3,d\}$, we will define the subsequence  $(n_i)_{i\geq 0}$  by induction on $i$
through a condition explained later. For every $i\geq 0$, we define $M_i$ the $k$-dimensional matrix by
 $$
M_i(l,j)=\left\{\begin{array}{lll}
  \frac{p_{n_{i+1}}}{p_{n_i}}-k(k-1) & \mbox{ if } & 1\leq  l=j \leq d\\
  k & \mbox{ if  } & l\neq j, 1\leq l\leq k \mbox{ and } 1\leq  j \leq d \\
  M_i(l,d) & \mbox{ if } & d< j\leq k.
\end{array}\right.$$
We always suppose that  $n_{i+1}$ is sufficiently large in order to have $\frac{p_{n_{i+1}}}{p_{n_i}}-k(k-1)>0$.

By the very definition,   $M_i$ is a positive matrix having $k\geq
3$   rows and columns; $\sum_{l=1}^k
M_i(l,j)=\frac{p_{n_{i+1}}}{p_{n_i}}$ for every $1\leq j\leq k$
and the range of $M_{i}$ is at most $d$. Thus the convex set
$\varprojlim_n(\triangle(k,p_{n_i}), M_i) $ has at most  $d$
extreme points.

If it has  exactly $d$ extreme points,  it is affine homeomorphic to $K$.
We will choose the sequence $(p_{n_i})_{i\geq 0}$ in order that  $P=\bigcap_{i\geq 0}M_0\cdots M_i(\triangle(k,p_{n_{i+1}}))$ 
has   $d$ extreme points, which implies that $\varprojlim_n(\triangle(k,p_{n_i}), M_i) $ has exactly $d$ extreme points.

For every $i\geq 0$,  the set  $P_i=M_0\cdots M_i(\triangle(k,p_{n_{i+1}}))$   is the closed convex set generated by the vectors  $v_{i,1},\cdots , v_{i,d}$, where
$$
v_{i,l}=\frac{1}{p_{n_{i+1}}}M_0\cdots M_i(\cdot,l), \mbox{ for every } 1\leq l\leq d.
$$
Since every $v_{i,l}$ is in  $\triangle(k,p_{n_0})$, there exists a sequence $(i_j)_{j\geq 0}$ such that for every $1\leq l\leq d$, the sequence $(v_{i_j,l})_{j\geq 0}$ converges to an element $v_l$ in   $\triangle(k,p_{n_0})$. Observe that $P$ is the closed convex set generated by $v_1,\cdots, v_d$. Thus if $v_1,\cdots, v_d$ are linearly independent then $P$ has $d$ extreme points.

Since for every $1\leq l\leq d$ we have 
$
\sum_{j=1}^k\frac{1}{p_{n_{i+1}}}M_0\cdots M_i(j,l)=\frac{1}{p_{n_0}},
$
there exists a positive vector $\delta_l^{(i)}=(\delta_{1,l}^{(i)},\cdots, \delta_{k,l}^{(i)})^T$ such that $\sum_{j=1}^k\delta_{j,l}^{(i)}=1$ and such that   for each $1\leq j\leq k$ 
$$
\frac{1}{p_{n_{i+1}}}M_0\cdots M_i(j,l)=\delta_{j,l}^{(i)}\frac{1}{p_{n_0}}.
$$
Thus if $B_i$ is the matrix given by
$$
B_i(\cdot,l)=\left\{ \begin{array}{lll}
                      v_{i,l} & \mbox{ if } 1\leq l\leq d\\
                       \frac{1}{p_{n_0}}e_l^{(k)} & \mbox{ if } d+1\leq l\leq k.
                  \end{array}\right.,
$$
then $B_i=DA_i$, where $D$ is the $k$-dimensional diagonal matrix given by 
$$D_i(l,l)=\frac{1}{p_{n_0}}, \mbox{ for every } 1\leq l\leq k,$$
and $A_i$ is the $k$-dimensional matrix defined by 
$$
A_i(\cdot ,l)=\left\{\begin{array}{lll}
               \delta_{l}^{(i)} & \mbox{ if } & 1\leq l \leq d\\
               e_l^{(k)}  &\mbox{ if } & d+1\leq l\leq k.
\end{array}\right..
$$
 If $\lim_{j\to\infty}A_{�_j}=A$ is invertible ($A$ is the $k$-dimensional matrix whose columns are the vectors $\lim_{j\to \infty}\delta_l^{(i_j)}$ and the canonical vectors  $e_{d+1}^{(k)},\cdots, e_k^{(k)}$), then $v_1, \cdots, v_l$ are linearly independent. For this it is enough to show that $A$ is strictly diagonally dominant (see the  Levy-Desplanques Theorem in \cite{HJ}).


Now we will define $(n_i)_{i\geq 0}$ in order that $A$ is  strictly diagonally dominant. 

Let $\varepsilon\in (0,\frac{1}{4})$. Let $n_0=0$ and $n_1>n_0$ such that for every $1\leq l\leq d$,
$$
\delta_{l,l}^{(0)}=1-\frac{p_{n_0}}{p_{n_1}} \sum_{j=1, j\neq l}^kM_0(j,l) =1-\frac{p_{n_0}}{p_{n_1}}k(k-1)\geq \frac{3}{4}+\varepsilon.
$$
For $i\geq 1$ we choose $n_{i+1}>n_i$ in order that
$$
\frac{1}{p_{n_{i+1}}}M_0\cdots M_{i-1}(l,l)<\varepsilon\frac{1}{p_{n_0}k(k-1)2^i}, \mbox{ for every } 1\leq l\leq d.
$$
After a standart computation, for every $i\geq 1$ and $1\leq l\leq d$ we get
$$
\delta_{l,l}^{(i)}\geq \delta_{l,l}^{(i-1)}-\frac{p_{n_0}}{p_{n_{i+1}}}k(k-1)M_0\cdots M_{i-1}(l,l),
$$
which implies that
$$
\delta_{l,l}^{(i)}\geq \delta_{l,l}^{(0)}-\varepsilon\sum_{j\geq 1}\frac{1}{2^j}\geq \frac{3}{4}.
$$
It follows that   $A(l,l)\geq \frac{3}{4}$ for every $1\leq l\leq k$, and since the sum of the elements in a column of $A$ is equal to $1$, we deduce that $A$ is  strictly diagonally dominant.
\end{proof}

\subsection{Infinite dimensional Choquet simplices}
We use the following characterization of infinite dimensional metrizable Choquet simplex.
\begin{lemma}[\cite{LL}, Corollary p.186]\label{lemma-LL}
For every infinite dimensional metrizable Choquet simplex $K$, there exists a sequence of
matrices $(A_n)_{n\geq 1}$ such that for every $n\geq 1$
\begin{enumerate}
\item $A_n(\triangle(n+1,1))=\triangle(n,1)$,  
\item $K$ is affine homeomorphic to
$\varprojlim_n(\triangle(n,1),A_n)$.
\end{enumerate}
\end{lemma}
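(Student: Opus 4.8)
The plan is to obtain this as the state‑space dual of the Lazar--Lindenstrauss structure theorem for separable $L_1$‑preduals; the quickest route is in fact to quote \cite[Corollary, p.~186]{LL} verbatim, but the underlying argument runs as follows. First I would pass to the space $A(K)$ of continuous affine real functions on $K$, equipped with the supremum norm, the pointwise order, and the constant function $1$ as order unit. Since $K$ is compact metrizable, $C(K)$ is separable, hence so is the closed subspace $A(K)$, and $K$ is recovered as the state space of $A(K)$. By Edwards' theorem, $K$ being a Choquet simplex is equivalent to $A(K)$ having the Riesz interpolation property, i.e. $A(K)$ is a separable simplex space.

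Next I would invoke the Lazar--Lindenstrauss structure theorem: a separable order‑unit Banach space with the interpolation property is the closed union $\overline{\bigcup_{n\ge 1} E_n}$ of an increasing chain of finite‑dimensional subspaces $E_n$, each containing $1$, each isometrically and order‑isomorphically equal to $(\RR^{k_n},\|\cdot\|_\infty)$ with $1$ sent to $(1,\dots,1)$, and with each inclusion $E_n\hookrightarrow E_{n+1}$ unital and positive. One may then refine the chain so that $k_n=n$ for every $n\ge 1$ (starting with $E_1=\RR\cdot 1$): an isometric unital order‑embedding $(\RR^{a},\|\cdot\|_\infty)\hookrightarrow(\RR^{b},\|\cdot\|_\infty)$ factors, one coordinate at a time, through the intermediate $\ell^\infty$‑spaces, because dually the corresponding affine surjection between standard simplices maps vertices onto vertices and can be written as a composition of ``vertex‑merging'' maps. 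Dualizing each inclusion $E_n\hookrightarrow E_{n+1}$ yields its adjoint on state spaces, which is surjective precisely because the inclusion is a unital isometric order‑embedding; identifying the state space of $(\RR^n,\|\cdot\|_\infty)$ with the standard simplex $\triangle(n,1)$, this adjoint is given by a nonnegative $n\times(n+1)$ matrix $A_n$ with unit column sums and $A_n(\triangle(n+1,1))=\triangle(n,1)$, which is (1). Finally, $A(K)=\overline{\bigcup_n E_n}$ forces its state space $K$ to be the inverse limit of the state spaces of the $E_n$, i.e. $K$ is affine homeomorphic to $\varprojlim_n(\triangle(n,1),A_n)$, which is (2).

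The one substantial point is the Lazar--Lindenstrauss theorem itself, namely the existence of the dense increasing chain of finite‑dimensional $\ell^\infty$‑subspaces of $A(K)$. This is exactly where the simplex hypothesis — equivalently, the Riesz interpolation property of $A(K)$ — is essential: the subspaces are built inductively, at each step enlarging a given finite‑dimensional subspace to one order‑isometric to some $\ell^\infty_k$ while keeping the norm controlled, and interpolation is what supplies the required positive extensions (via a finite sandwich/selection argument). The remaining steps — the duality bookkeeping, the identification of the state spaces of $\ell^\infty_n$ with standard simplices, and the refinement making the dimension grow by exactly one at each stage — are routine.
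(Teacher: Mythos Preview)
Your proposal is correct, and in fact it supplies more than the paper does: the paper offers no proof of this lemma at all, merely citing it as \cite[Corollary, p.~186]{LL}. Your sketch---passing to the separable simplex space $A(K)$, invoking the Lazar--Lindenstrauss structure theorem to write $A(K)$ as a dense increasing union of $\ell^\infty_{k_n}$ subspaces, refining so that $k_n=n$, and then dualizing to obtain the inverse system of standard simplices---is exactly the argument underlying the cited corollary, so there is nothing to compare beyond noting that you have unpacked what the authors leave as a black box. One small point worth making explicit: the hypothesis that $K$ is \emph{infinite} dimensional is what guarantees the chain $E_1\subsetneq E_2\subsetneq\cdots$ never stabilizes, so the refinement to $k_n=n$ for all $n\ge 1$ is indeed possible; your sketch uses this implicitly but does not say so.
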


Our strategy is to approximate the sequence of matrices $(A_n)_n$ by a managed sequence.
Then we show that the associated  inverse limits are affine homeomorphic. For this, we need the following classical  density result, whose proof follows from the fact that every non cyclic subgroup of $\RR$ is dense.

\begin{lemma}\label{density}
Let ${\bf r}=(r_n)_{n\geq 0}$ be a sequence of integers such that
$r_n\geq 2$ for every $n\geq 0$. Let $C_{\bf r}$ be the subgroup
of $(\RR,+)$ generated by $\{(r_0\cdots r_n)^{-1} : n\geq 0\}$. Then
 $$(C_{\bf r})^p\cap\triangle(p,1)\cap\{v\in \RR^p: v>0\}$$ is dense in $\triangle(p,1)$, for every $p\geq 2$, where $(C_{\bf r})^p$ is the Cartesian product  $\prod_{i=1}^pC_{\bf r}$.
\end{lemma}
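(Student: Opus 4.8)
The plan is to reduce the statement to a one-dimensional density fact and then propagate it to each coordinate. First I would observe that $C_{\bf r}$ is a subgroup of $(\RR,+)$ which is not cyclic: indeed, since each $r_n\geq 2$, the generators $(r_0\cdots r_n)^{-1}$ form a strictly decreasing sequence of positive reals converging to $0$, so $C_{\bf r}$ contains arbitrarily small positive elements and hence cannot be of the form $\alpha\ZZ$ for any $\alpha>0$. By the classical dichotomy for subgroups of $\RR$ (every subgroup is either cyclic or dense), $C_{\bf r}$ is dense in $\RR$.

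Next I would fix $p\geq 2$ and a target point $w=(w_1,\dots,w_p)\in\triangle(p,1)$, so $w_i\geq 0$ and $\sum_i w_i=1$, together with $\varepsilon>0$. The goal is to produce $v\in (C_{\bf r})^p$ with $v>0$ (strictly positive in every coordinate), $\sum_i v_i=1$, and $\|v-w\|<\varepsilon$. Since $C_{\bf r}$ is dense in $\RR$ and contains $1$ (take $n$ with $r_0\cdots r_n$ dividing into... actually one should note $1\in C_{\bf r}$ need not hold, but $C_{\bf r}$ does contain elements arbitrarily close to any real, which is all that is needed), I would first choose, for $1\leq i\leq p-1$, elements $v_i\in C_{\bf r}$ with $v_i>0$ and $|v_i-w_i|$ small — small enough that also $v_i < w_i + \varepsilon/p$ and the partial sum $\sum_{i=1}^{p-1}v_i < 1$ (possible since $\sum_{i=1}^{p-1}w_i\leq 1$ with strict inequality available after a harmless perturbation, or since $w_p$ can be assumed positive by first approximating $w$ by an interior point of the simplex). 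Then set $v_p = 1 - \sum_{i=1}^{p-1}v_i$. This forces $\sum_i v_i = 1$ exactly, and $v_p\in C_{\bf r}$ provided $1\in C_{\bf r}$; if $1\notin C_{\bf r}$ one instead approximates within the affine hyperplane by noting that $C_{\bf r}$ being dense in $\RR$ makes $(C_{\bf r})^p$ dense in $\RR^p$, hence dense in the affine subspace $\{\sum v_i = c\}$ for any $c\in C_{\bf r}$, and one picks $c\in C_{\bf r}$ close to $1$ and rescales.

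The cleanest route, which I would actually take, is this: since $(C_{\bf r})^p$ is dense in $\RR^p$, its intersection with the open set $\{v\in\RR^p : v>0\}$ is dense in that open set. The relative interior of $\triangle(p,1)$ inside its affine hull is contained in $\{v>0\}$ and is dense in $\triangle(p,1)$. The only subtlety is staying on the hyperplane $H=\{\sum v_i=1\}$: I would handle this by working with the first $p-1$ coordinates freely in $C_{\bf r}$ and solving for the last, but to keep $v_p\in C_{\bf r}$ I note that $C_{\bf r}$ is closed under addition and contains all of $\ZZ$ — wait, it contains $\ZZ$ iff some generator is $1/m$ with... in fact $(r_0\cdots r_n)^{-1}\cdot (r_0\cdots r_n) $ shows integer multiples of generators lie in $C_{\bf r}$, and $1 = r_0\cdot(r_0)^{-1}\cdot$... more carefully, $1/r_0 \in C_{\bf r}$ so $1 = r_0\cdot(1/r_0)\in C_{\bf r}$. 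Hence $1\in C_{\bf r}$, $v_p = 1-\sum_{i<p}v_i\in C_{\bf r}$ automatically, and we are done.

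The main obstacle is purely bookkeeping: ensuring the constructed point is \emph{strictly} positive in every coordinate (including the last, where $v_p$ is determined rather than chosen) while remaining on the simplex and close to the target. I expect to resolve this by first replacing an arbitrary boundary target $w$ by a nearby interior point of $\triangle(p,1)$ (legitimate since interior points are dense), which gives room for all the small perturbations in the free coordinates without pushing $v_p$ to zero or below.
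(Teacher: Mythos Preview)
Your argument is correct and follows exactly the route the paper indicates (the paper gives no proof, merely noting that it follows from the classical fact that every non-cyclic subgroup of $\RR$ is dense). The one point worth stating cleanly, which you do eventually reach after some meandering, is that $1\in C_{\bf r}$ because $r_0^{-1}$ is a generator and $C_{\bf r}$ is closed under integer multiples; this is precisely what makes the last coordinate $v_p=1-\sum_{i<p}v_i$ land in $C_{\bf r}$ automatically, and together with a preliminary replacement of $w$ by a nearby interior point of $\triangle(p,1)$ it finishes the proof.
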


\begin{lemma}\label{infinite} 
Let $K$ be an infinite dimensional metrizable Choquet simplex, and
let $(p_n)_{n\geq 0}$ be an increasing sequence of positive
integers such that for every $n\geq 0$ the integer $p_n$ divides
$p_{n+1}$.   Then there exist an increasing  subsequence $(n_i)_{i\geq 1}$
of indices  and a sequence of matrices  $(M_i)_{i \geq 1}$  managed by  $(p_{n_i})_{i\geq 0}$ such that for every $i\geq 0$,   $$k_{i+1}\leq \min\{M_i(l,k): 1\leq l\leq k_i, 1\leq k\leq k_{i+1}\}, $$ and such that
$K$ is affine homeomorphic to the inverse limit
$\varprojlim_n(\triangle(k_i, p_{n_i}), M_i)$, where $k_i$ is the
number of rows of $M_i$, for every $i\geq 0$.
\end{lemma}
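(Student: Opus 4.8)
The plan is to start from the sequence $(A_n)_{n\geq 1}$ provided by Lemma \ref{lemma-LL}, so that $K\simeq \varprojlim_n(\triangle(n,1),A_n)$ with $A_n(\triangle(n+1,1))=\triangle(n,1)$, and to replace it, after telescoping, by a nearby sequence of \emph{integer} matrices that is managed by a subsequence of $(p_n)$ and has the extra column-size control. The first step is to rescale: a managed sequence lives on the simplices $\triangle(k,p_{n_i})$, so I identify $\triangle(k,p_{n_i})$ with $\triangle(k,1)$ via multiplication by $p_{n_i}$; under this identification a matrix $M_i$ is managed by $(p_{n_i})_i$ exactly when $M_i$ has nonnegative integer entries and each column sums to $p_{n_{i+1}}/p_{n_i}$, and it corresponds to an affine map $\triangle(k_{i+1},1)\to\triangle(k_i,1)$ given by the column-stochastic matrix $\tfrac{p_{n_i}}{p_{n_{i+1}}}M_i$. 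So what I really need is: a telescoping $B_i=A_{n_i}A_{n_i+1}\cdots A_{n_{i+1}-1}$ of the $A$'s (still column-stochastic, still sending the full simplex onto the full simplex) together with column-stochastic approximants $\widehat B_i$ whose entries lie in $C_{\bf r}$ with ${\bf r}_i=p_{n_{i+1}}/p_{n_i}$ and with denominators dividing $p_{n_{i+1}}/p_{n_i}$, chosen close enough that $\varprojlim_i(\triangle(k_i,1),\widehat B_i)\simeq\varprojlim_i(\triangle(k_i,1),B_i)=K$.

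The key steps, in order. \emph{(1) Density/rational approximation.} Using Lemma \ref{density}, for each $i$ and each column of $B_i$ I approximate the column (a point of $\triangle(k_i,1)$) by a point of $(C_{\bf r})^{k_i}\cap\triangle(k_i,1)$ with all coordinates strictly positive, where the group $C_{\bf r}$ is built from the ratios $r_j=p_{n_{j+1}}/p_{n_j}$; clearing denominators, this gives a strictly positive integer matrix $\widehat B_i$ with every column summing to $p_{n_{i+1}}/p_{n_i}$, i.e. managed by $(p_{n_i})_i$. Choosing $n_{i+1}$ large makes this ratio large, which is what lets the approximation be both accurate and strictly positive, and this is also where I arrange the column-size bound $k_{i+1}\leq\min_{l,k}M_i(l,k)$: since $\min$ of the entries of $\widehat B_i$ is essentially $(p_{n_{i+1}}/p_{n_i})\cdot(\text{smallest coordinate})$, and the simplices have a fixed number of extreme points $k_i=i$ (or $i+1$ after the shift), enlarging $n_{i+1}$ forces every entry of $\widehat B_i$ to exceed $k_{i+1}$. \emph{(2) Stability of the inverse limit.} This is the real work: I must show that if $\|\widehat B_i - B_i\|$ goes to $0$ fast enough (summably, after possibly re-telescoping), then $\varprojlim_i(\triangle(k_i,1),\widehat B_i)$ and $\varprojlim_i(\triangle(k_i,1),B_i)$ are affine homeomorphic. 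The standard device is to produce, for each $i$, affine maps $\triangle(k_{i+1},1)\to\triangle(k_i,1)$ realizing commuting-up-to-small-error triangles between the two towers, then pass to a diagonal/mapping-telescope argument; equivalently, one shows both inverse limits equal $\bigcap_i B_0\cdots B_i(\triangle(k_{i+1},1))$ once the approximation errors are summable, using surjectivity of the $B_i$ on the full simplices and compactness.

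The main obstacle I expect is step (2): controlling the inverse limit under perturbation. Two subtleties make it delicate. First, the maps $B_i$ are only surjective, not injective, so ``closeness of the maps'' does not immediately give ``closeness of the inverse limits''; one needs the nested-image description and a quantitative argument that the Hausdorff distance between $B_0\cdots B_i(\triangle)$ computed with the true $B$'s versus the approximants $\widehat B$'s tends to $0$, which requires the errors $\|\widehat B_i-B_i\|$ to be summable against the operator norms of the partial products — and since all maps here are column-stochastic, those norms are bounded by $1$, so summability of the errors suffices. Second, I must simultaneously meet three demands on the $\widehat B_i$: integrality with the prescribed column sum (managed), the column-size inequality, and the approximation bound; these are compatible only because enlarging $n_{i+1}$ (hence the ratio $p_{n_{i+1}}/p_{n_i}$) helps all three at once, so the induction on $i$ choosing $n_{i+1}$ must bundle all three thresholds. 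Once step (2) is in place, combining it with step (1) and undoing the rescaling $\triangle(k_i,1)\leftrightarrow\triangle(k_i,p_{n_i})$ yields the managed sequence $(M_i)=(\widehat B_i)$ with $K\simeq\varprojlim_i(\triangle(k_i,p_{n_i}),M_i)$, as required.
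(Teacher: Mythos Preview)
Your plan matches the paper's strategy: start from the Lazar--Lindenstrauss representation (Lemma~\ref{lemma-LL}), approximate each column of the stochastic matrix by a nearby strictly positive rational vector using Lemma~\ref{density}, invoke a stability result for inverse limits under summable perturbation, and rescale by $p_{n_{i+1}}/p_{n_i}$ to obtain the managed integer matrices $M_i$. Two differences are worth flagging. First, the paper does \emph{not} telescope the $A_n$'s: it approximates each $A_i$ directly (a fixed $(i+2)\times(i+3)$ matrix) by a rational matrix $B_i$, and only \emph{afterwards} chooses $n_{i+1}$ large enough to clear denominators and to force $(p_{n_{i+1}}/p_{n_i})\,B_i > i+3 = k_{i+1}$ entrywise. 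This decouples the approximation from the choice of $n_{i+1}$ and makes the column-size bound automatic; your telescoping $B_i = A_{n_i}\cdots A_{n_{i+1}-1}$ makes $B_i$ depend on $n_{i+1}$, so you must simultaneously balance the approximation error against the growing target $k_{i+1}=n_{i+1}+O(1)$ --- doable, but it is the source of the slip in your write-up where you also claim $k_i=i$. Second, the stability step you single out as ``the real work'' is simply imported in the paper from \cite[Lemma~9]{CRL2}, which asserts that $\sum_i \sup_{v\in\triangle}\|A_i v - B_i v\|_1 < \infty$ already forces the two inverse limits to be affine homeomorphic; no new argument is supplied (or needed) here.
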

\begin{proof}
For every $n\geq 0$, let $r_n\geq 2$ be the integer such that
$p_{n+1}=p_nr_n$.

Let $(A_n)_{n\geq 1}$ be the sequence of matrices given in Lemma
\ref{lemma-LL}. We can assume that
$
A_{n}: \triangle(n+3,1)\longrightarrow \triangle(n+2,1),
$
for every $n\geq 1$. Now we define the subsequence $(n_{i})_{i}$ by induction.
\medskip

We set $n_1=0$.

\medskip

Let $i\geq 1$ and suppose that we have defined $n_{i}\geq 0$. We set
${\bf r^{(i)}}=(r_n)_{n\geq n_i}$. For every $1\leq j\leq i+3$,
Lemma \ref{density} ensures the existence of $v^{(i,j)}\in
(C_{{\bf r^{(i)}}})^{i+2}\cap \triangle(i+2,1)\cap\{v\in \RR^{i+2}:
v>0\}$ such that
\begin{equation}\label{infinite-eq1}
\|v^{(i,j)}-A_{i}(\cdot, j)\|_1<\frac{1}{2^i}.
\end{equation}

Let $B_i$ be the matrix given by
$$
B_i(\cdot,j)=v^{(i,j)}, \mbox{ for every } 1\leq j\leq i+3.
$$
Observe that (\ref{infinite-eq1})  implies that
$$
\sum_{n\geq 1}\sup\{\|A_{n}v-B_{n}v\|_1: v\in
\triangle_{n+3}\}<\infty.
$$
It follows from \cite[Lemma 9]{CRL2} that $K$ is affine
homeomorphic $\varprojlim_n(\triangle(i+2,1), B_i)$.

\medskip

Let $n_{i+1}>n_i$ be such that $r_{n_i}\cdots
r_{n_{i+1}-1}v^{(i,j)}$ is an integer vector and such that
$r_{n_i}\cdots r_{n_{i+1}-1}v^{(i,j)}>i+3$, for every $1\leq j\leq
i+3$.

We define
$$
M_i=\frac{p_{n_{i+1}}}{p_{n_i}}B_i.
$$

Thus $M_i=P_i^{-1}B_iP_{i+1}$, where $P_i$ is the diagonal matrix
given by $P_i(j,j)=p_{n_i}$ for every $1\leq j\leq i+2$ and $i\geq
1$. This shows that $\varprojlim_n(\triangle(i+2,1), B_i)$ is
affine homeomorphic to $\varprojlim_n(\triangle(i+2,p_{n_{i}}),
M_i)$.

The proof conclude verifying that $(M_i)_{i\geq 0}$ is managed by $(p_{n_i})_{i\geq 0}$.
\end{proof}

\section{Proof of the main theorems. }

\subsection{Proof of Theorem A}
The proof of Theorem A is a corollary of previous results.
\begin{proof}[Proof of Theorem A]
 Let $ext(K)$ be the set of extreme points of
$K$.  If $ext(K)$ is finite, then the proof is direct from  Proposition \ref{construction} and Lemma
\ref{lemma-inverse-limit-1}. If
$ext(K)$ is infinite, the proof follows from  Proposition \ref{construction} and Lemma \ref{infinite}.
\end{proof}

\subsection{Proof of Theorem B}

We refer to \cite{Do2} for definitions and properties about Toeplitz $\ZZ$-subshifts or Toeplitz flows. See \cite{DHS} and \cite{HPS}  for details about ordered Bratteli
diagram, Kakutani-Rokhlin partitions and dimension groups associated to minimal $\ZZ$-actions on the Cantor set.

We denote by $\Sigma$ a finite alphabet with at least two elements. For $x=(x_n)_{n\in\ZZ}\in \Sigma^{\ZZ}$ and $n\leq m\in\ZZ$, we set $x[n,m]=x_n\cdots x_m$. In a similar way, if $w=w_0\cdots w_{n-1}$ is a word in $\Sigma^n$, we set $w[k,l]=w_k\cdots w_l$ for every $0\leq k\leq l<n$.

 The next result follows from the proof of  \cite[Theorem 8]{GJ}.

\begin{lemma}\label{lemma-KR}
Let $x_0\in\Sigma^{\ZZ}$ be a Toeplitz sequence and let $(X,\sigma|_X,\ZZ)$ be the associated Toeplitz $\ZZ$-subshift. There exist a period structure $(p_n)_{n\geq 0}$  of $x_0$  and a sequence of matrices $(A_n)_{n\geq 0}$ managed by $(p_n)_{n\geq 0}$ such that the dimension group associated to $(X,\sigma|_{X},\ZZ)$ is isomorphic to
$$
\xymatrix{
{\ZZ^{}}\ar@{->}[r]^{A_0^T} & {\ZZ^{k_1}}\ar@{->}[r]^{A^T_1} &
{\ZZ^{k_2}}\ar@{->}[r]^{A^T_2}& {\cdots}}.
$$
Furthermore, if $k_n$ is the number of rows of $A_n$ and $r_n=\frac{p_{n+1}}{p_n}$, then for every $m>n>0$ and $1\leq k\leq k_{m}$,  
\begin{eqnarray*}
 |\{1\leq l\leq k_{m}: A_{n,m-1}(\cdot, l)=A_{n, m-1}(\cdot,k)\}| & &\\  
 \leq  (A_{n,m-1}(1,k)-r_{n+2}\cdots r_{m-1}, \cdots,  A_{n, m-1}(k_n,k)-r_{n+2}\cdots r_{m-1})!, & &
 \end{eqnarray*}
 where  $A_{n,m-1}=A_n\cdots A_{m-1}$.  
  \end{lemma}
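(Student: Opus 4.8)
The plan is to realize $(X,\sigma|_X,\ZZ)$ through the Kakutani--Rokhlin / Bratteli--Vershik model for Toeplitz flows of Gjerde and Johansen, and then to read the column-multiplicity estimate off the combinatorics of that model. First I would fix a period structure $(p_n)_{n\geq 0}$ of $x_0$ (such a structure exists by the classical theory of Toeplitz $\ZZ$-subshifts; see \cite{Do2} and \cite{CP}), and take $\Gamma_n=p_n\ZZ$ together with the fundamental domains $F_n=\{0,1,\dots,p_n-1\}$, which form a F\o lner sequence satisfying (F1), (F2) and (F3). Following the proof of \cite[Theorem 8]{GJ}, for each $n$ let the level-$n$ vertices be the distinct period-aligned \emph{$p_n$-words} occurring in points of $X$, let $k_n$ be their number, and let $A_n(i,k)$ count how many of the $r_n=p_{n+1}/p_n$ aligned $p_n$-subblocks of the $k$-th $p_{n+1}$-word equal the $i$-th $p_n$-word. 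Since every $p_{n+1}$-word is the concatenation of exactly $r_n$ such $p_n$-words, one has $\sum_i A_n(i,k)=r_n=p_{n+1}/p_n$, so $(A_n)_{n\geq 0}$ is managed by $(p_n)_{n\geq 0}$. The associated partitions form a nested sequence of r-K-R partitions in the sense of Section~\ref{K-R partitions}, and by the Bratteli--Vershik theory for minimal $\ZZ$-actions (\cite[Theorem 8]{GJ}, see also \cite{HPS}) the dimension group of $(X,\sigma|_X,\ZZ)$ is the direct limit $\lim_{\rightarrow n}(\ZZ^{k_n},A_n^T)$ with its canonical order unit. This settles everything except the multinomial inequality, which is the real content of the lemma.

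For the inequality, fix $m>n>0$ and a level-$m$ vertex $k$, i.e.\ a $p_m$-word $w$. Reading the telescoped matrix as a counting matrix, the entry $A_{n,m-1}(i,k)$ is precisely the number of aligned $p_n$-subblocks of $w$ equal to the $i$-th $p_n$-word; write $a_i=A_{n,m-1}(i,k)$. Two $p_m$-words give the same column of $A_{n,m-1}$ exactly when they carry the same multiset of $p_n$-subblocks, so they differ only by a permutation of these subblocks. Moreover all such $p_m$-words lie in the common base $C_m\subseteq C_{n+1}$, hence they agree with $x_0$ on the $p_{n+1}$-skeleton restricted to $F_m$; consequently every $p_n$-subblock all of whose positions lie in that skeleton is \emph{frozen}, i.e.\ it carries the same $p_n$-word, in the same place, for all these words. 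Writing $\phi_i$ for the number of frozen occurrences of the $i$-th $p_n$-word inside $w$, and treating the remaining subblocks as freely rearrangeable (which only overcounts, since partially skeleton-constrained subblocks are in fact further restricted), the number of $p_m$-words sharing the column of $w$ is at most the multinomial coefficient $(a_1-\phi_1,\dots,a_{k_n}-\phi_{k_n})!$.

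The key point I would extract from the proof of \cite[Theorem 8]{GJ} is a uniform lower bound on the frozen multiplicities: the block encoding can be chosen so that every $p_n$-word is forced at a frozen position at least once inside each period-aligned $p_{n+2}$-block of $w$. Since $w$ contains $p_m/p_{n+2}=r_{n+2}\cdots r_{m-1}$ such $p_{n+2}$-blocks, this gives $\phi_i\geq r_{n+2}\cdots r_{m-1}$ for every $i$. Finally I would invoke the elementary monotonicity of multinomial coefficients: lowering any argument decreases the coefficient, since $(b_1,\dots,b_t)!/(b_1-1,b_2,\dots,b_t)!=(b_1+\cdots+b_t)/b_1\geq 1$. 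Applying this with $b_i=a_i-\phi_i\leq a_i-r_{n+2}\cdots r_{m-1}$ yields
$$(a_1-\phi_1,\dots,a_{k_n}-\phi_{k_n})!\leq (a_1-r_{n+2}\cdots r_{m-1},\dots,a_{k_n}-r_{n+2}\cdots r_{m-1})!,$$
which is exactly the asserted bound. The main obstacle is the frozen-occurrence claim: extracting from the Gjerde--Johansen construction that the periodization at the intermediate levels $n+1,\dots,m-1$ pins at least one occurrence of each lower-level word inside every $p_{n+2}$-block, and verifying that this produces precisely the constant $r_{n+2}\cdots r_{m-1}$ rather than a comparable quantity at a neighbouring level. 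This is the extraction-side analogue of the border-forcing estimate recorded for the construction side in Remark~\ref{remark-bound}.
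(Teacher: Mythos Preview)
Your proposal is correct and follows essentially the same route as the paper: the Gjerde--Johansen K-R model, the incidence matrices $A_n$ counting aligned $p_n$-subwords, and a frozen-position count for the multinomial bound. The paper resolves your ``main obstacle'' not via the $p_{n+1}$-skeleton but directly through the defining prefix condition of $W_{n+2}$ in \cite{GJ}: every aligned $p_{n+2}$-subblock of a word in $W_m$ lies in $W_{n+2}$ and therefore begins with the fixed word $x_0[0,p_{n+1}-1]\in W_{n+1}$, which---after telescoping so that $A_n>0$---contains each $W_n$-word at least once, giving one frozen occurrence of every type in each of the $r_{n+2}\cdots r_{m-1}$ blocks and hence the stated constant.
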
 

\begin{proof}
In the proof of Theorem 8 in \cite{GJ} the authors show there exist a period structure $(p_n)_{n\geq 1}$  of $x_0$  and a sequence $(\P_n)_{n\geq 0}$  of nested Kakutani-Rokhlin partitions of $(X,\sigma|_X,\ZZ)$ such that $\P_0=\{X\}$ and $\P_n=\{T^j(C_{n,k}): 0\leq j< p_n, 1\leq k\leq k_n\}$, where 
$$
C_{n,k}=\{x\in X: x[0,p_n-1]=w_{n,k} \} \mbox{ for every } 1\leq k\leq k_n,
$$  
with $W_n=\{w_{n,1}, \cdots, w_{n,k_n}\}$ the set of the words $w$  of $x_0$ of length $p_n$ verifying $w[0,p_{n-1}-1]=x_0[0,p_{n-1}-1]$, for every $n\geq 1$ (with $p_0=1$).

Thus the dimension group with unit associated to $(X,\sigma|_X,\ZZ)$ is isomorphic to
$$
\lim_{\rightarrow n}(\ZZ^{k_n},A_n^T)=\xymatrix{
{\ZZ^{}}\ar@{->}[r]^{A_0^T} & {\ZZ^{k_1}}\ar@{->}[r]^{A_1^T} &
{\ZZ^{k_2}}\ar@{->}[r]^{A_2^T}& {\cdots}},
$$
where $A_n(i,j)$ is the number of times that the word $w_{n,i}$ appears in the word $w_{n+1,j}$, for every $1\leq i\leq k_{n}$, $1\leq j\leq k_{n+1}$ and $n\geq 1$, and the matrix $A_0^T$ is the vector in $\ZZ^{k_1}$ whose coordinates are equal to $p_1$. 

Since $w_{n+1,i}\neq w_{n+1,j}$ for $i\neq j$,  equal columns of the matrix $A_n$   produce different concatenations of  words in $W_{n}$. This implies that for every $1\leq k\leq k_{n+1}$, the number of columns of $A_n$ which are equal to $A_n(\cdot, k)$ can not exceed the number of different concatenations of $r_n$ words in $W_n$ using exactly $A_n(j,k)$ copies of $w_{n,j}$, for every $1\leq j\leq k_n$. This means that the number of columns which are equal to $A_n(\cdot, k)$ is smaller or equal to $(A_n(1,k), \cdots,  A_n(k_n,k))!.$ 

Now fix $n>0$ and take $m>n$.  The coordinate $(i,j)$ of the matrix $A_{n, m-1}$ contains the number of times that the word $w_{n,i}\in W_n$ appears in $w_{m,j}\in W_m$.  Observe that every word $u$ in $W_m$ is a concatenation of $r_{n+2}\cdots r_{m-1}$ words in $W_{n+2}$. In addition, each word in $W_{n+2}$ starts with  $x_0[0,p_{n+1}-1]\in W_{n+1}$, which is a word   containing every word in $W_n$ (we can always assume that the matrices $A_n$ are positive).  Thus there exist $0\leq l_1<\cdots<l_{r_{n+1}\cdots r_{m-1}}<p_m$ such that $u[l_s,l_s+p_n-1]=w[l_s,l_s+p_{n}-1] \in W_n$, for every $1\leq s\leq r_{n+2}\cdots r_{m-1}$ and  $u,w\in W_m$.

 This implies that the number of all possible concatenations of words in $W_n$   producing a word in $W_m$  according to the column $k$ of the matrix $A_{n,m-1}$ is smaller or equal to
  $$
   (A_{n, m-1}(1,k)-r_{n+2}\cdots r_{m-1}, \cdots, A_{n,m-1}(k_n, k)-r_{n+2}\cdots r_{m-1})!.
  $$
\end{proof}

\begin{proof}[Proof of Theorem B]
Let $x_0\in X$ be a Toeplitz sequence.
Let $(p_n)_{n\geq 1}$ and $(A_n)_{n\geq 0}$ be the period structure of $x_0$ and the sequence of matrices  given by Lemma \ref{lemma-KR} respectively. It is straightforward to check that Lemma \ref{lemma-KR} is also true if we take a subsequence of $(p_n)_{n\geq 0}$. Thus we can assume that for every $n\geq 1$, the matrix $A_n$ has its coordinates strictly grater than $1$ and that there exist positive integers $r_{n,1}, \cdots, r_{n,d}>1$ such that
  $$
  \frac{p_{n+1}}{p_n}=r_n=r_{n,1}\cdots r_{n,d}.
  $$
  Le define $q_{n+1,i}=r_{0,i}\cdots r_{n,i}$ for every $1\leq i\leq d$, and $\Gamma_{n+1}=\prod_{i=1}^d q_{n+1,i}\ZZ$, for every $n\geq 0$. We have $\Gamma_{n+1}\subseteq \Gamma_n$, $\bigcap_{n\geq 1}\Gamma_n=\{0\}$ and $|\ZZ^d/\Gamma_n|=p_{n}$. Let $(F_n)_{n\geq 0}$ be a F\o lner sequence associated to $(\Gamma_n)_{n\geq 1}$ as in Lemma \ref{folner2}.  We denote $R_n$ as in Section \ref{realization-ordered-groups} (the set that defines "border"). 
   
 Now, we define an increasing sequence $(n_i)_{i\geq 1}$   of integers as follows:
 
 We set $n_1=1$.  For $i\geq 1$, given $n_i$ we chose $n_{i+1}>n_i+1$ such that
 $$
 \sum_{g\in R_{n_i}}\frac{|F_{n_{i+1}}\setminus F_{n_{i+1}}-g|}{|F_{n_{i+1}}|}<\frac{1}{|F_{n_i}|r_{n_i}r_{n_{i}+1}}.
$$
Thus we have
\begin{eqnarray*}
\frac{|F_{n_{i+1}}|}{|F_{n_i}|}-\sum_{g\in R_{n_i}}|F_{n_{i+1}}\setminus F_{n_{i+1}}-g|&>& \frac{|F_{n_{i+1}}|}{|F_{n_i}|}-\frac{|F_{n_{i+1}}|}{|F_{n_i}|r_{n_i}r_{n_{i}+1}}\\
&=& r_{n_i}\cdots r_{n_{i+1}-1}-r_{n_i+2}\cdots r_{n_{i+1}-1}\\
&>& r_{n_i}\cdots r_{n_{i+1}-1}-k_{n_i}r_{n_i+2}\cdots r_{n_{i+1}-1}
\end{eqnarray*}
 Let $M_0=A_0$ and  $M_i=A_{n_i}\cdots A_{n_{i+1}-1}$ be for every $i\geq 1$.  For every $1\leq k\leq k_{n_{i+1}}$ we get
$$
M_i(k_{n_i},k)- \sum_{g\in R_{n_i}}|F_{n_{i+1}}\setminus F_{n_{i+1}}-g|> M_i(k_{n_i},k)-r_{n_i+2}\cdots r_{n_{i+1}-1},
$$
which implies that
$$
(M_i(1,k),\cdots, M_i(k_{n_i}-1,k),M_i(k_{n_i},k)-\sum_{g\in R_{n_i}}|F_{n_{i+1}}\setminus F_{n_{i+1}}-g|)! 
$$
is grater than
$$
(M_i(1,k)-r_{n_i+2}\cdots r_{n_{i+1}-1}, \cdots, M_i(k_{n_i},k)-r_{n_i+2}\cdots r_{n_{i+1}-1})!
$$  
Then from the previous inequality  and  Lemma \ref{lemma-KR} we get that the number of columns of $M_i$ which are equal to  $M_ i(\cdot, k)$  is smaller than
$$
(M_i(1,k),\cdots, M_i(k_{n_i}-1,k),M_i(k_{n_i},k)-\sum_{g\in R_{n_i}}|F_{n_{i+1}}\setminus F_{n_{i+1}}-g|)! 
$$
As in the proof of Proposition \ref{construction}, we define $\tilde{M}_i$ and we call $l_i$ and $l_{i+1}$ the number of rows and columns of $\tilde{M}_i$ respectively, for every $i\geq 0$. According to the notations of the proof of Proposition \ref{construction}, in our case $M_0$ corresponds to the matrix $M$ and $\tilde{M}_0$ corresponds to the matrix $\tilde{M}$.   Observe  that the bound on the number of columns which are equal to $M_i(\cdot,k)$ (and then to $\tilde{M}_i(\cdot,k)$) ensures the existence of enough possibilities to fill the coordinates of $F_{n_i}$ in order to obtain different functions $B_{i,1}\cdots, B_{i,l_i}\in \{1,\cdots, l_1\}^{F_{n_i}}$  as in the proof of Proposition \ref{construction}, for every $i\geq 1$ (see Remark \ref{remark-bound}).  

Lemma \ref{sufficient-condition} implies that the Toeplitz $\ZZ^d$-subshift $(Y,\sigma|_Y,\ZZ^d)$ defined from $(B_{i,1},\cdots, B_{i,l_i})_{i\geq 1}$ has an ordered group $\G(Y,\sigma|_Y,\ZZ^d)$ isomorphic to  
$(H/inf(H), (H/inf(H))^+,u+inf(H))$, where $(H,H^+)$ is given by 
$$
\xymatrix{{\ZZ^{}}\ar@{->}[r]^{\tilde{M}_0^T} &{\ZZ^{l_0}}\ar@{->}[r]^{\tilde{M}^T_1} & {\ZZ^{l_2}}\ar@{->}[r]^{\tilde{M}_2^T} &
{\ZZ^{l_3}}\ar@{->}[r]^{\tilde{M}^T_3}& {\cdots}},
$$
with $\tilde{M}_0=|F_1|(1,\cdots,1)$ and $u=[1,0]$.

Lemma \ref{nexo2} implies that $(H,H^+,u)$ is isomorphic to the dimension group with unit 
$(J,J^+,w)$ associated to $(X,\sigma|_X,\ZZ)$. Thus $(J/inf(J), (J/inf(J))^+,w+inf(J))$, the ordered group  associated to $(X,\sigma|_X,\ZZ)$,  is isomorphic to  $\G(Y,\sigma|_{Y},\ZZ^d)$.  We conclude the proof applying Theorem \ref{theorem1}.
  \end{proof}

 In \cite{Su},  the author shows that
every minimal Cantor system $(Y,T,\ZZ)$ having an associated
Bratteli diagram which satisfies the equal path number property, is strong
orbit equivalent to a Toeplitz subshift $(X,\sigma|_X,\ZZ)$.  Thus the next result is inmediat.

\begin{corollary}\label{divisible}
Let $(X,T,\ZZ)$ be a minimal Cantor having an associated
Bratteli diagram which satisfies the equal path number property. Then for every $d\geq 1$ there exists a  Toeplitz subshift $(Y,\sigma|_Y,\ZZ^d)$ which is orbit equivalent to
$(X,T,\ZZ)$.
\end{corollary}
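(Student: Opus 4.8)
The plan is to obtain the statement as a two-step chain of orbit equivalences, using transitivity. First I would invoke Sugisaki's theorem \cite{Su}: since the Bratteli diagram associated to $(X,T,\ZZ)$ has the equal path number property, $(X,T,\ZZ)$ is strong orbit equivalent---in particular, (topologically) orbit equivalent---to some Toeplitz $\ZZ$-subshift, which I will denote $(X_1,\sigma|_{X_1},\ZZ)$. The only point to note here is that strong orbit equivalence implies orbit equivalence, which is immediate from the definitions since the former is the latter with an additional restriction on the orbit cocycles.

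Next I would apply Theorem B to $(X_1,\sigma|_{X_1},\ZZ)$: for the prescribed integer $d\geq 1$ there exists a Toeplitz $\ZZ^d$-subshift $(Y,\sigma|_Y,\ZZ^d)$ that is orbit equivalent to $(X_1,\sigma|_{X_1},\ZZ)$. Theorem B applies without restriction to any Toeplitz $\ZZ$-subshift, so no compatibility hypothesis needs to be checked against the output of Sugisaki's theorem.

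Finally I would record that orbit equivalence is an equivalence relation: if $F_1\colon X\to X_1$ and $F_2\colon X_1\to Y$ are homeomorphisms with $F_1(o_T(x))=o_{\sigma|_{X_1}}(F_1(x))$ for all $x\in X$ and $F_2(o_{\sigma|_{X_1}}(y))=o_{\sigma|_Y}(F_2(y))$ for all $y\in X_1$, then $F_2\circ F_1\colon X\to Y$ is a homeomorphism with $F_2\circ F_1(o_T(x))=o_{\sigma|_Y}(F_2\circ F_1(x))$ for every $x\in X$. Hence $(X,T,\ZZ)$ is orbit equivalent to the Toeplitz $\ZZ^d$-subshift $(Y,\sigma|_Y,\ZZ^d)$, as claimed. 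There is essentially no obstacle: the corollary is a formal composition of Sugisaki's result with Theorem B, and the proof is a couple of lines.
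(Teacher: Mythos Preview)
Your proposal is correct and follows exactly the paper's approach: the corollary is obtained by composing Sugisaki's theorem \cite{Su} (to pass from the equal path number property to a Toeplitz $\ZZ$-subshift via strong orbit equivalence) with Theorem B (to pass to a Toeplitz $\ZZ^d$-subshift), using transitivity of orbit equivalence. The paper states this in one line as ``immediate''; your added details on transitivity are fine but not needed.
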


\medskip

{\bf Acknowledgments.}  We would like to  thank the referee for many
valuable comments which served to improve the article.

\bibliographystyle{alpha}

\begin{thebibliography}{99}

\bibitem{Aus}  Auslander, Joseph. Minimal flows and their extensions. North-Holland Mathematics Studies, 153. Notas de Matem\'atica [Mathematical Notes]122. North-Holland Publishing Co., Amsterdam, 1988. 

\bibitem{CC} {  T. Ceccherini-Silberstein, and   M. Coornaert} {\it Cellular
automata and groups.} Springer Monographs in Mathematics.
Springer-Verlag, Berlin 2010.

\bibitem{Co06}{ M.I. Cortez,} {$\ZZ^d$-Toeplitz arrays}. {\em Discrete Contin. Dyn. Syst.} 15 (2006), no. 3, 859�-881.

\bibitem{Co}{ M. I. Cortez,} { Realization of a Choquet simplex
as the set of invariant probability measures of a tiling system.}
{\em Ergodic Theory Dynam. Systems} 26 (2006), no. 5, 1417--1441

\bibitem{CP} {M.I. Cortez, and  S. Petite,} { G-odometers and their almost one-to-one extensions.}
{\em J. Lond. Math. Soc.} (2) 78 (2008), no. 1, 1�-20.

\bibitem{CRL2}{ M.I. Cortez, and J.  Rivera-Letelier,}  { Choquet simplices as spaces of invariant probability measures on post-critical sets.}
 {\em Ann. Inst. H. Poincar\'e Anal. Non Lin\'eaire} 27 (2010), no. 1, 95�-115.

\bibitem{Do1}{ T. Downarowicz,} { The Choquet simplex of invariant measures for minimal flows.}
{\em Israel J. Math.} 74 (1991), no. 2-3, 241--256

\bibitem{Do2} { T. Downarowicz,}
  {Survey of odometers and Toeplitz flows.}
  {\em Algebraic and Topological Dynamics} (Kolyada, Manin, Ward eds),
Contemporary Math.,  \textbf{385} (2005), 7--38

\bibitem{Do3} { T. Downarowicz,} { The royal couple conceals their mutual
relationship: a noncoalescent Toeplitz flow.}{\em  Israel J. Math.} 97
(1997), 239�-251.



\bibitem{Du} {F. Durand,} {\it Combinatorics on Bratteli diagrams and dynamical systems.} Combinatorics, automata and number theory, 324�372, Encyclopedia Math. Appl., 135, Cambridge Univ. Press, Cambridge, 2010

\bibitem{DHS} {F. Durand,  B. Host, and C.  Skau,} { Substitutional dynamical
systems, Bratteli diagrams and dimension groups.} {\em Ergodic Theory
Dynam. Systems} 19 (1999), no. 4, 953�-993.





\bibitem{E} { E. G.  Effros,} { Dimensions and $C\sp{\ast}
$-algebras.} in {\em in CBMS Regional Conference Series in Mathematics}, 46.
Conference Board of the Mathematical Sciences, Washington, D.C.,
1981.


\bibitem{DL}
   { T. Downarowicz, and  Y. Lacroix}. {Almost $1-1$ extensions of
      Furstenberg-Weiss type and
      applications to Toeplitz flows},
  {\em Studia Math.} (2) \textbf{130} (1998), 149--170.





\bibitem{Ga} { D. Gaboriau.}
{ Co\^ut des relations d'\'equivalence et des groupes}. {\em
Invent. Math.}, {\bf 139} (2000), (1) , 41--98.




\bibitem{GMPS} {T. Giordano,  H. Matui,   I. F. Putnam, and C.  Skau,}
{ Orbit equivalence for Cantor minimal $\ZZ^d$-systems}.
{\em Invent. Math.} 179 (2010), no. 1, 119--158

\bibitem{GPS} {T. Giordano,I. F.  Putnam,and C.  Skau, } { Topological orbit equivalence and C*-crossed products.}
{\em J. Reine Angew. Math.} 469 (1995), 51--111.


      \bibitem{GJ}
      { R. Gjerde, O. Johansen.}
      { Bratteli-Vershik models for Cantor minimal systems: applications to Toeplitz flows},
       {\em Ergodic Theory \& Dynam. Systems} (6) \textbf{20} (2000), 1687--1710.

\bibitem{Go} { K. R. Goodearl,} { Partially ordered abelian groups with
interpolation.} {\em Mathematical Surveys and Monographs}, 20. American
Mathematical Society, Providence, RI, 1986.




\bibitem{HPS}
{  R.H. Herman, I.F. Putnam, and C.F. Skau.} {Ordered Bratteli
diagrams, dimension groups and topological dynamics.} {\em
Internat. J. Math.}  3  (1992),  no. 6, 827--864.


\bibitem{HJ} {R. A. Horn, C. B. Johnson.} {\it  Matrix analysis.} Cambridge University Press, Cambridge, 1985.

\bibitem{JK}
      { K. Jacobs, and M. Keane.}
      { $0-1$-sequences of Toeplitz type},
      {\em Z. Wahrscheinlichkeitstheorie und Verw. Gebiete} \textbf{13} 1969
123--131.






\bibitem{Kr}
      { F. Krieger }
      {  Sous-d\'ecalages de Toeplitz sur les groupes moyennables
r\'esiduellement finis} {\it J. Lond. Math. Soc.} (2) 75 (2007), no. 2,
447--462.

\bibitem{Kr2}{F. Krieger}  Toeplitz subshifts and odometers for residually finite groups.   \'{E}cole de th\'eorie ergodique
Yves Lacroix, Pierre Liardet, Jean-Paul Thouvenot, \'eds.
S\'eminaires et Congr\`{e}s 20 (2010), 147-161.

\bibitem{LL} { A. Lazar, and  J. Lindenstrauss,} { Banach spaces whose duals are
spaces and their representing matrices.} {\em Acta Math.} 126 (1971),
165�-193.











\bibitem{Su} {F. Sugisaki,} { Toeplitz flows, ordered Bratteli
diagrams and strong orbit equivalence.} {\em Ergodic Theory Dynam.
Systems} 21 (2001), no. 6, 1867--1881.


\bibitem{Wei} B. Weiss, {\it  Monotileable amenable groups.} Amer. Math. Soc. Transl. Ser. 2,
202,  (2001), 257–262.

\bibitem{Wi}
      { S. Williams.}
      { Toeplitz minimal flows which are not uniquely ergodic},
      {\em Z. Wahrscheinlichkeitstheorie und Verw. Gebiete } (1) \textbf{67} (1984), 95--107.
\end{thebibliography}

\end{document}